\definecolor{darkblue}{rgb}{0.0,0.0,0.6}
 \newtheorem{cor}{Corollary}
 \newtheorem{prop}{Proposition}
 \newtheorem{assum}{Assumption}
 \newtheorem{lem}{Lemma}
 \newtheorem{rem}{Remark}
\def\0{{\bf 0}}
\def\1{{\bf 1}}
\def\bes{\begin{equation*}}
	\def\ees{\end{equation*}}
\def\be{\begin{equation}}
	\def\ee{\end{equation}}
\def\bas{\begin{eqnarray*}}
	\def\eas{\end{eqnarray*}}
\def\ba{\begin{eqnarray}}
	\def\ea{\end{eqnarray}}
\def\bma{\begin{bmatrix}}
	\def\ema{\end{bmatrix}}
\def\bmx{\begin{matrix}}
	\def\emx{\end{matrix}}
\def\ben{\begin{enumerate}}
	\def\een{\end{enumerate}}
\def\bit{\begin{itemize}}
	\def\eit{\end{itemize}}
\def\bet{\begin{tabular}}
	\def\eet{\end{tabular}}
\def\qed{\hfill \vrule height6pt width 6pt depth 0pt}
\def\e{\epsilon}
\def\de{\delta}
\def\an#1{{\color{black}#1}}
\def\tat#1{{\color{black}#1}}
\def\tatiana#1{{\color{black}#1}}
\def\R{\mathbb{R}}
\def\d{\delta}
\def\la{\langle}
\def\ra{\rangle}
\def\b{{\beta}}
\def\a{\alpha}
\def\g{\gamma}
\def\m{\mu}
\def\dist{{\textrm {dist}}}
\def\as{\emph{a.s.}}
\def\dom{{\textrm {dom}}}
\def\grad{\nabla}
\def\ex#1{\mathop \mathbb{E}\left[ {#1}\right] }
\begin{document}


\ifpdf
  \DeclareGraphicsExtensions{.eps,.pdf,.png,.jpg}
\else
  \DeclareGraphicsExtensions{.eps}
\fi


\newsiamremark{remark}{Remark}
\newsiamremark{hypothesis}{Hypothesis}
\crefname{hypothesis}{Hypothesis}{Hypotheses}
\newsiamthm{claim}{Claim}

\headers{Huber Penalty Approach to Problems with Linear Constraints}{A. Nedi\'c and T. Tatarenko}

\title{Huber Loss-Based Penalty Approach to Problems with Linear Constraints\thanks{Submitted to the editors on ......}}

\author{
 Angelia Nedi\'c\thanks{The School of Electrical, Computer and Energy Engineering at Arizona State University, USA
  (\email{Angelia.Nedich@asu.edu}). A.~Nedi\'c gratefully acknowledges support of this work by the
Office of Naval Research grant no.\ N00014-12-1-0998.}
\and
Tatiana Tatarenko\thanks{Department of Control Theory and Robotics, TU Darmstadt, Germany
	(\email{tatiana.tatarenko@tu-darmstadt.de}). T.~Tatarenko gratefully acknowledges support of this work by the German Research Foundation
	(Deutsche Forschungsgemeinschaft, DFG) within the SPP 1984 “Hybrid and
	multimodal energy systems: System theoretical methods for the transformation
	and operation of complex networks”.}
}



\ifpdf
\hypersetup{
  pdftitle=Huber Loss-Based Penalty Approach to Problems with Linear Constraints,
  pdfauthor={Tatarenko, Nedi\'c}
}
\fi




\maketitle
\begin{abstract}
	We consider an optimization problem with many linear inequalities constraints. 
To deal with a large number of constraints, we provide a
penalty reformulation of the problem, where the penalty is a variant of the one-sided Huber loss function.
We study the infeasibility properties of the solutions of penalized problems for nonconvex and convex objective functions, as the penalty parameters vary with time. 
Then, we propose a random incremental penalty method for solving convex problem,
and investigate its convergence properties for convex and strongly convex objective functions. We show that the iterates of the method converge to a solution of the original problem almost surely and in expectation for suitable choices of the penalty parameters and the stepsize. Also, we establish convergence rate of the method for appropriately defined weighted averages of the iterates for the expected function values. 
We establish  $O(\ln^{1/2+\e} k/{\sqrt k})$-convergence rate when the objective function is convex, and  $O(\ln^{\e} k/k)$-convergence rate when the objective function is strongly convex with $\e$ being any small positive number. 
To the best of our knowledge, these are the first results on the convergence rate for the penalty-based incremental subgradient method with time-varying penalty parameters.
\end{abstract}

\begin{keywords}
  Convex minimization, linear constraints, inexact penalty, incremental methods
\end{keywords}

\begin{AMS}
  90C25, 90C06, 65K05
\end{AMS}

\section{Introduction}
In  this  paper,  we  study  the  problem  of minimizing  a
\emph{convex} function $f:\R^n\to\R$ over a convex and closed set $X$
that is the intersection of finitely many {sets} $X_i$, represented by linear inequalities, $i=1,\ldots,m,$
where $m\ge 2$ is large,
i.e., 
\ba\label{eq:gprob}
\hbox{min} \, f(x), \quad \hbox{s.t.} \,  x\in X = \cap_{i=1}^m X_i. \vspace{-0.3cm}
\ea
Optimization problems of the form~\eqref{eq:gprob} arise in many areas of research,
such as digital filter settings in communication systems \cite{filter}, energy consumption in Smart Grids \cite{SmartG}, convex  relaxations of  various  combinatorial  optimization  problems in machine learning applications \cite{clustering, matching}.

Our interest is in case when $m$ is large, which prohibits us from using
projected gradient and
augmented Lagrangian methods~\cite{BertsekasConstrOpt, Xu2020PrimalDualSG},
that require either computation of the (Euclidean) projection or
an estimation of the gradient for the sum of many functions, at each iteration.
To reduce the complexity, one may consider a method that operates on a  single set $X_i$
from the constraint set collection $\{X_1,\ldots,X_m\}$ at each iteration.
Algorithms using random constraint sampling for general convex optimization problems~\eqref{eq:gprob}  have been first considered  in~\cite{Nedich2011} and were extended in~\cite{WangBertsSM} to a broader class of randomization over the sets of constraints. The convergence rate analysis in~\cite{Nedich2011,WangBertsSM}
	demonstrates that the expected optimality error (in terms of function values) diminishes to zero with the rate of $O(1/\sqrt k)$
	which is the optimal convergence rate for merely convex functions. 
The work~\cite{Patrascu2017NonAsConv} further extends the results from~\cite{Nedich2011,WangBertsSM} to non asymptotic analysis of problems with potentially infinitely many constraints. On the other hand, a primal-dual method updating a random coordinate of the dual variable pro iteration was proposed for generalized linear programming with equality constraints in \cite{Song2021}. 

A possible reformulation of the problem~\eqref{eq:gprob} is through the use of the indicator functions of
the constraint sets, resulting in the following unconstrained problem
\be\label{eq:reform}
\min_{x\in\R^n}\sum_{i=1}^m \left\{\frac{1}{m}f(x) + \chi_i(x)\right\},
\ee
where $\chi_i(\cdot):\R^n\to\R\cup\{+\infty\}$ is the indicator function of the set $X_i$
(taking value $0$ at the points
$x\in X_i$ and, otherwise, taking value $+\infty$).
The advantage of this reformulation is that the objective function is the sum of convex functions and incremental methods can be employed that compute only a (sub)-gradient of one of the component functions at each iteration.
The traditional incremental methods do not have memory, and their origin can be traced back to work of Kibardin~\cite{Kibardin}. They have been studied for
smooth least-square problems~\cite{Ber97,Luo91}, for training the neural networks~\cite{Gri94,Gri00}, for smooth convex problems~\cite{Sol98,Tse98} and
for non-smooth convex problems~\cite{NeB01,GGM06,HeD09,JRJ09,Wright08}
(see~\cite{BertsekasPenalty} for a more comprehensive survey of these methods).
However, no rate of convergence to the exact solution has been obtained for such procedures.
Reformulation~\eqref{eq:reform} has been considered in~\cite{Kundu2018} as a departure point
toward an exact penalty reformulation using the set-distance functions.
This exact penalty formulation has been motivated by a simple exact penalty model proposed in~\cite{Bertsekas2011} (using only the set-distance functions)
and a more general penalty model considered in~\cite{BertsekasPenalty}.
In~\cite{Kundu2018}, a lower bound on the penalty parameter
has been identified guaranteeing that the optimal solutions of the penalized problem are also optimal solutions of the original problem~\eqref{eq:reform}. However, this bound depends on a so-called  regularity constant for the constraint set, which might be difficult to estimate. Moreover, the proposed approaches in~\cite{Kundu2018} do not utilize
incremental processing, but rather primal-dual approaches where a full (sub)-gradient of the penalized function
is used.
On a broader scale, our work is related to random methods for solving linear feasibility problems~\cite{Nedic-cdc-2010,Strohmer}, and their extensions to solving convex  inequality systems~\cite{NecRichPat2019}.

In contrast to the penalized formulation in~\eqref{eq:reform} and the
works mentioned above, this paper deals with a penalized reformulation of the problem~\eqref{eq:gprob}, where the penalized problems vary with time.
This is done by varying penalty parameters so as to gradually decrease the infeasibility of the iterates. In this way,  we can guarantee convergence of the  \emph{single time scale
procedure incremental procedure} to an exact solution of the original problem~\eqref{eq:gprob}. Our choice of the penalty functions is a variant of the one-sided Huber losses~\cite{Huber}, which have Lipschitz continuous gradients.
In the work~\cite{Penalty_siam}, existence of the fixed penalty choices for this penalized reformulation has been shown under which the fast incremental algorithms can be applied to achieve convergence to a feasible point in a predefined neighborhood of the optimal solution of the original problem, with a linear convergence rate. However, to guarantee this convergence, some problem specific parameters need to be known, which are difficult to estimate in practice. The recent paper~\cite{PenaltyConvRate-cdc} deals with the penalty parameters which vary with time. However, that work considers a strongly convex objective function $f(\cdot)$ exclusively. 
In this present paper, we \an{show that some of the results in~\cite{PenaltyConvRate-cdc} extend to nonconvex objective functions $f(\cdot)$ and to merely convex functions, as given in Section~\ref{subsec-var-par-a} and
	Section~\ref{subsec-var-par-b}, respectively.} 
The random incremental penalty-based gradient method has been proposed in~\cite{PenaltyConvRate-cdc} and analyzed for a strongly convex objective function $f(\cdot)$ with Lipschitz continuous gradients. In this paper, however,
we consider a subgradient variant of the method in Section~\ref{sec-algo} for both convex and strongly convex $f(\cdot)$. \an{Therein, we analyze its almost sure  convergence and show its convergence rate of the order $O(\ln^{1/2+\e} k/\sqrt{k})$ for merely convex objective function, where  $\epsilon>0$ is arbitrarily small. Moreover, for strongly convex objective function, we establish convergence rate 
	of the order $O(\ln^{\e} k /{k})$ for arbitrarily small $\epsilon>0$, which improves the convergence rate of $O(1/\sqrt{k})$ provided in~\cite{PenaltyConvRate-cdc}.} Note that these rates possess better dependence on the logarithmic term than the rates obtained in \cite{Fercoq2019AlmostSC}. In that work the authors focus on an optimization problem over a convex set with a special structure and present a  penalty-based method with a double loop structure, where at each iteration the corresponding penalized problem has
to be solved up to some accuracy, and prove the convergence rates of the order $O(\ln k /{\sqrt k})$ and $O(\ln k /{k})$ for the purely convex and strongly convex cases respectively.


The outline of the paper is as follows. In Section~\ref{sec-formulation},
we provide the penalty based formulation of the original problem~\eqref{eq:gprob}
and some basic properties of the chosen penalty functions.
In Section~\ref{sec-var-par}, we investigate the relations for the solutions of the penalized problems, as we vary the penalty parameters, for continuous nonconvex objective function $f(\cdot)$ and for convex objective function $f(\cdot)$.
In Section~\ref{sec-algo}, we propose a random incremental penalty method and show that its iterates converge almost surely and in expectation to a solution of the original problem, under suitable assumptions on the penalty parameters and the stepsize. In Section 4, we also provide convergence rate estimates using appropriate weighted averages of the iterates. To the best of our knowledge, these are the first results on the convergence rate for the penalty-based incremental subgradient method with time-varying penalty parameters.
In Section~\ref{sec-concl}, we conclude the paper.


\section{Problem Formulation and its Penalty-based Reformulation}\label{sec-formulation}
We consider the following optimization problem:
\ba\label{eq:problem}
\hbox{min} \,  f(x), \quad \hbox{s.t.} \,  \la a_i,x\ra - b_i\le 0, \  i\in[m],
\ea
where $[m]=\{1,\ldots,m\}$ and 
the vectors $a_i\in\R^n$, $i\in[m]$, are nonzero.
We will assume that the problem is \emph{feasible}.
Throughout the paper, we use $X_i$ to denote the set of points satisfying the $i$-th inequality constraint, i.e.,
$X_i=\{x\in\R^n\mid  \la a_i,x\ra - b_i\le 0\} \quad\hbox{for all  }i\in[m],$
and $X$ to denote the nonempty intersection of the sets $X_i$, $i\in[m]$, i.e., $X=\cap_{i\in[m]}X_i$.
Associated with problem~\eqref{eq:problem}, we
consider a penalized problem\vspace{-0.2cm}
\ba\label{eq:pen-problem}
\min_{x\in\R^n}F_{\g\d}(x),
\ea
where\vspace{-0.2cm}
\begin{equation}\label{eq:penfun}
	F_{\g\d}(x) = f(x) + \frac{\gamma}{m} \sum_{i=1}^{m} h_\d\left(x; a_i, b_i\right).\end{equation}
Here, $\gamma>0$ and $\d\ge0$ are penalty parameters.
The vectors $a_i$ and scalars $b_i$ are the same as those characterizing the constraints in problem~\eqref{eq:problem}.
For a given nonzero vector $a\in\R^n$ and $b\in\R$, the penalty function
$h_\d(\cdot;a,b)$ is given by
\vspace{-0.2cm}
\begin{align}\label{eq:hfun}
	h_\delta(x; a,b) = \begin{cases}
		\frac{\la a,x\ra -b}{\|a\|} &\text{ if }  \la a,x\ra - b>\de,\\
		\frac{(\la a,x\ra -b + \de)^2}{4\de\|a\|} &\text{ if }  -\de\le\la a,x\ra - b\le\de,\\
		0 &\text{ if }  \la a,x\ra - b<-\de,
	\end{cases}
\end{align}
(see Figure~\ref{fig:penalty} for an illustration).
\begin{figure}[!t]
	\centering
	\psfrag{-0.5}[c][l]{\scriptsize{$-0.5$}}
	\psfrag{0}[c][l]{\scriptsize{$0$}}
	\psfrag{0.5}[c][l]{\scriptsize{$0.5$}}
	\psfrag{1}[c][l]{\scriptsize{$1$}}
	\psfrag{1.5}[c][l]{\scriptsize{$1.5$}}
	\psfrag{2}[c][l]{\scriptsize{$2$}}
	\psfrag{h}[c][l]{{\Large{$h_{\delta}$}}}
	\psfrag{x}[c][b]{\Large{$x$}}
	\begin{overpic}[width=0.81\linewidth]{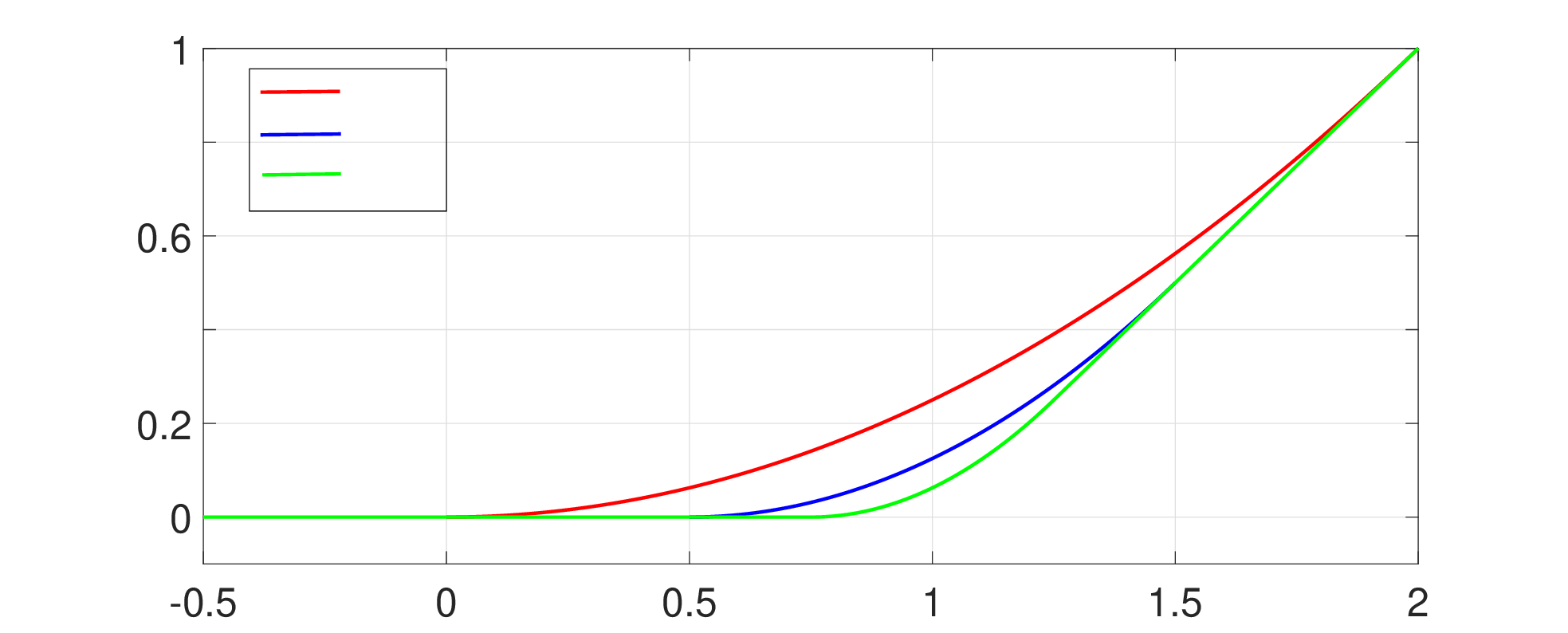}
		\put(22,34){\scriptsize{$\delta =1$}}
		\put(22,31.4){\scriptsize{$\delta =\frac12$}}
		\put(22,28.5){\scriptsize{$\delta =\frac14$}}
		\put(51,-1){\Large{$x$}}
		\put(4,17){\rotatebox{90}{\Large$h_{\d}$}}
	\end{overpic}
	\caption{Penalty functions $h_\delta(x;1,1)$ for the constraint $x-1\le 0$, $x\in\R$,
		with $\d\in\left\{\frac{1}{4},\frac{1}{2},1\right\}$.}
	\label{fig:penalty}
\end{figure}
The penalty function $h_\d(\cdot; a,b)$ is a variant of the one-sided Huber loss functions.
Originally, such functions have been introduced in applications of robust regression models
to make them less sensitive to outliers in data in comparison with the squared error loss~\cite{Huber}.
In contrast, here,
we use this type of penalty function to smoothen the distance-based penalties (the average distance to the sets $X_i$)
proposed in~\cite{BertsekasPenalty}.
Furthermore, an appropriate choice of the parameter $\delta\ge0$ allows us to increase the penalty values as compared to the penalties based on the squared distances to the sets $X_i$, which have a small penalty values around an optimum lying close to the boundary of the constraint set~\cite{Siedlecki}.

For any $\d\ge0$, the function $h_\d(\cdot;a,b)$ satisfies the following relations:
\begin{align}\label{eq:hfunineq}
	h_\delta(x; a,b) \ge 0\qquad\hbox{for all }x\in\R^n,
\end{align}\vspace{-0.2cm}
\begin{align}\label{eq:hfunineq1}
	h_\delta(x; a,b) \le \frac{\delta}{4\|a\|}\qquad\hbox{for all $x$ satisfying }
	\la a,x\ra\le b,
\end{align}\vspace{-0.2cm}
\begin{align}\label{eq:hfunineq2}
	h_\delta(x; a,b) > \frac{\delta}{4\|a\|}\qquad\hbox{for all $x$ satisfying }
	\la a,x\ra> b.
\end{align}

For a vector $a\ne0$, the function $h_\delta(\cdot; a,b)$ is a composition of
a scalar function\vspace{-0.2cm}
\begin{align}\label{eq:sfun}
	p_\d(s)= \begin{cases}
		s &\text{ if } \quad s>\d,\\
		\frac{(s + \de)^2}{4\d} &\text{ if } \quad -\de\le s \le\de,\\
		0 &\text{ if } \quad s<-\de,
	\end{cases}
\end{align}
with a linear function  $x\mapsto \la a,x\ra-b$, scaled by $\frac{1}{\|a\|}$, i.e.,
\be\label{eq:handp}
h_\delta(x; a,b) =\frac{1}{\|a\|}p_\d(\la a,x\ra-b)\qquad\hbox{for all }x\in\R^n.\ee
The function $p_\d(\cdot)$ is convex on $\R$ for any $\delta\ge0$,
implying that the function $h_\d(\cdot;a,b)$ is convex on $\R^n$ for any $\d\ge0$.

Furthermore, the function
$p_\d(\cdot)$ is twice differentiable for any $\d>0$,
with the first and second derivatives given by
\begin{align}\label{eq:pderiv}
	p'_\d(s)= \begin{cases}
		1 &\text{ if } \quad s>\d,\\
		\frac{(s + \de)}{2\d} &\text{ if } \quad -\de\le s \le\de,\\
		0 &\text{ if } \quad s<-\de,
	\end{cases}
\end{align}
\begin{align*}
	p''_\d(s) = \begin{cases}
		\frac{1}{2\d} &\text{ if } \quad -\d\le s\le\d,\\
		0 &\text{ if } \quad s<-\de \quad \text{or}\quad s>\d.
	\end{cases}
\end{align*}
Thus, for $\d>0$, the function $p_\d(\cdot)$ has Lipschitz continuous derivatives with a constant $\frac{1}{2\d}$.
Hence, the function $h_\delta(\cdot; a,b)$ is differentiable for any $\d>0$, and
its gradient is given by
\be\label{eq:gradh}
\nabla h_\delta(x; a,b) =\frac{1}{\|a\|}\,p'_\d(\la a,x\ra-b) a\qquad\hbox{for all }x\in\R^n.\ee
Moreover,  $\nabla h_\delta(\cdot; a,b)$ is Lipschitz continuous with a constant $\frac{\|a\|}{2\d}$, i.e.,
for all $x,y\in\R^n$,
\be\label{eq:Lipc-gradh}
\|\nabla h_\delta(x; a,b) -\nabla h_\delta(y; a,b)\| \le \frac{\|a\|}{2\d}\,\|x-y\|.
\ee
In view of the definition of the penalty function $F_{\g\d}$ in~\eqref{eq:penfun}
and relation~\eqref{eq:gradh},
we can see that the magnitude of the ``slope" of the penalty function is controlled by the parameter $\g>0$,
while the ratio of the parameters $\g$ and $\d$ is controlling the ``curvature" of the penalty function.
Finally, in the following lemma, we provide some additional properties of the gradients $\nabla h_{\d}(\cdot;a,b)$.
\begin{lem}\label{lem:pderiv}
	Consider the function $h_\d(\cdot; a,b)$ as given in~\eqref{eq:hfun}.
	Then, we have $\|\nabla h_\d(x;a,b)\|\le 1$ for all $x\in\R^n$. Additionally,
	if $\d_1 \ge \d_2$, then
	\[\max_{x\in\R^n}\|\nabla h_{\d_1}(x;a,b)- \nabla h_{\d_2}(x;a,b)\|
	\le \frac{\d_1-\d_2}{2\d_1}.\]
\end{lem}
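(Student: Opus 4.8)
The plan is to reduce both claims to one-dimensional statements about the scalar derivative $p'_\d(\cdot)$ from~\eqref{eq:pderiv}, exploiting the representation~\eqref{eq:gradh}. Since $\nabla h_\d(x;a,b) = \frac{1}{\|a\|}\,p'_\d(\la a,x\ra - b)\,a$, taking norms and cancelling the factor $\|a\|$ gives
\[
\|\nabla h_\d(x;a,b)\| = \left|p'_\d(\la a,x\ra - b)\right|,
\]
so the gradient norm depends on $x$ only through the scalar $s := \la a,x\ra - b$. Because $a \ne 0$, the affine map $x \mapsto \la a,x\ra - b$ is surjective onto $\R$, so as $x$ ranges over $\R^n$ the argument $s$ ranges over all of $\R$. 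This converts both parts of the lemma into scalar estimates on $p'_\d$.

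For the first claim, I would verify $|p'_\d(s)| \le 1$ directly from~\eqref{eq:pderiv} by cases: the value is $1$ for $s > \d$ and $0$ for $s < -\d$, while on the middle interval $-\d \le s \le \d$ the affine expression $\frac{s+\d}{2\d}$ increases monotonically from $0$ to $1$. Hence $p'_\d(s) \in [0,1]$ everywhere, which yields $\|\nabla h_\d(x;a,b)\| \le 1$ for all $x$.

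For the second claim, the same reduction shows that the quantity to bound equals $\sup_{s\in\R}|p'_{\d_1}(s) - p'_{\d_2}(s)|$. Since $\d_1 \ge \d_2$, the breakpoints of the two piecewise definitions are ordered as $-\d_1 \le -\d_2 \le \d_2 \le \d_1$, partitioning $\R$ into five intervals. I would evaluate the difference on each: it vanishes for $s < -\d_1$ and for $s > \d_1$ (both derivatives saturated equally); on $-\d_1 \le s < -\d_2$ it equals $\frac{s+\d_1}{2\d_1}$; on the middle interval $-\d_2 \le s \le \d_2$ it simplifies, after combining the two fractions, to $\frac{s(\d_2-\d_1)}{2\d_1\d_2}$; and on $\d_2 < s \le \d_1$ it equals $\frac{s-\d_1}{2\d_1}$. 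In each piece the absolute value is maximized at an endpoint, and the common maximal value $\frac{\d_1-\d_2}{2\d_1}$ is attained at $s = \pm\d_2$, establishing the bound (in fact with equality, so the $\max$ in the statement is genuinely achieved).

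The computation is entirely elementary; the only mild subtlety is the middle interval, where the two affine pieces must be combined and the sign of $\d_2-\d_1$ tracked, but the resulting expression $\frac{s(\d_2-\d_1)}{2\d_1\d_2}$ is monotone in $s$, so its extremum over $[-\d_2,\d_2]$ is immediate. No single step is a genuine obstacle; the main care is the bookkeeping over the five regions and confirming that the candidate maxima agree across adjacent intervals at $s=\pm\d_2$.
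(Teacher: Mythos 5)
Your argument is correct and complete. The paper itself gives no proof of this lemma, deferring entirely to the citation \cite{Penalty_siam}, so there is no in-text argument to compare against; your reduction via~\eqref{eq:gradh} to the scalar derivative $p'_\d$, the observation that $\|\nabla h_\d(x;a,b)\| = |p'_\d(\la a,x\ra-b)|$ with $s=\la a,x\ra-b$ sweeping all of $\R$ since $a\ne 0$, and the five-interval case analysis for the second claim are all accurate (I verified the middle-interval simplification to $\frac{s(\d_2-\d_1)}{2\d_1\d_2}$ and the agreement of the candidate maxima at $s=\pm\d_2$). The only point worth making explicit is that the computation tacitly requires $\d_2>0$, since $h_0(\cdot;a,b)$ is not differentiable at the kink and~\eqref{eq:gradh} is stated only for $\d>0$; this is consistent with how the lemma is used in the paper, where both penalty parameters are positive, and the case $\d_1=\d_2$ is trivially covered since the difference is then identically zero.
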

\begin{proof} Can be found in \cite{Penalty_siam}. \end{proof} 

Throughout the rest of the article, 
we let $\Pi_Y [x]$ denote the (Euclidean) projection of a point $x$ on a convex closed set $Y$,
i.e.,
$\dist(x, Y) = \|x - \Pi_{Y} [x]\|.$ 
Also, the smallest norm of the vectors $a_i$, $i\in[m]$, is denoted by $\a_{\min}$, i.e., 
$\a_{\min}=\min_{i\in[m]}\|a_i\|$.

\section{Time-varying Penalty Parameters}\label{sec-var-par}
Consider positive scalar sequences $\{\delta_k\}$ and $\{\g_k\}$
and, for each $k$, let  $F_k$ denote the penalty function $F_{\d_k\g_k}(x)$, i.e.,
\begin{equation}\label{eq-penalized-function}
	F_k(x) = f(x) + \frac{\g_k}{m} \sum_{i=1}^{m} h_k\left(x; a_i, b_i\right),
\end{equation}
where we use $h_k\left(\cdot; a_i, b_i\right)$ to denote $h_{\d_k}\left(\cdot; a_i, b_i\right)$ for each~$i$.
We let $X_k^*$ denote the optimal set for the penalized problem $\min_{x\in\R^n} F_k(x)$ and $X^*$  denote the optimal set for the original problem~\eqref{eq:problem}. We next investigate the properties of the optimal solutions of the penalized problems for the case of a general function $f(\cdot)$  and the case when $f(\cdot)$ is convex.
\subsection{General Function $f$}\label{subsec-var-par-a}
When $f(\cdot)$ is continuous and has bounded lower-level sets, 
the optimal set  $X^*$ for the original problem is nonempty and compact. Also, for each $k$,
the optimal set $X_k^*$ of the penalized problem is also nonempty and compact.

As a solution $x_k^*\in X_k^*$ need not be feasible, we next estimate the distance of any $x_k^*\in X_k^*$
from the feasible set $X$. 
To simplify the notation, we denote by $H_k(\cdot)$ the average of the penalty functions $h_k(\cdot;a_i,b_i)$, $i\in[m]$,
\begin{equation}\label{eq-hk}
	H_k(x)= \frac{1}{m}\sum_{i=1}^{m}h_k(x;a_i,b_i)\qquad\hbox{for all }x\in\R^n.
\end{equation}
Thus, the penalty function in~\eqref{eq-penalized-function}
is written as
\begin{equation}\label{eq-penfun-hk}
	F_k(x) = f(x) + \g_kH_k(x)\qquad\hbox{for all }x\in\R^n.
\end{equation}
Regarding the average penalty $H_k(\cdot)$, we have an upper bound on its values at feasible points $x\in X$ via relation~\eqref{eq:hfunineq1}.
Specifically, setting $\d=\d_k$ in relation~\eqref{eq:hfunineq1}, we have $h_k(x; a_i,b_i) \le \frac{\d_k}{4\|a_i\|}\le \frac{\d_k}{4\a_{\min}}$  for all $x\in X$ and $i\in[m]$,
implying that 
\begin{equation}\label{eq-hk-atfeasx}
	H_k(x)\le \frac{\d_k}{4\a_{\min}}\qquad\hbox{for all $x\in X$ and all }k\ge1.
\end{equation}

To lower bound the value $H_k(x)$ at any $x\in\R^n$, we use
the following result regarding the intersection of linear sets.
\begin{lem}[Hoffman lemma~\cite{Hoffman1952}]\label{lem-hoffman}
	Given a collection of sets $X_i=\{x\in\R^n\mid \la a_i,x\ra -b\le 0\}$, $i\in[m]$,
	with a nonempty intersection $X=\cap_{i=1}^m X_i$,
	there exists a scalar
	$\beta=\beta(a_1,\ldots,a_m)>0$ such that
	$\beta \sum_{i=1}^{m} \dist(x, X_i) \ge \dist(x,X)$ for all  $x\in\R^n$. 
\end{lem}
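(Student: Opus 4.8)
The plan is to compare the global distance to the polyhedron $X$ with the sum of the one-sided slack distances by exploiting the normal-cone optimality conditions of the projection onto $X$. First I would dispose of the trivial case: if $x\in X$ then $\dist(x,X)=0$ and every $\dist(x,X_i)=0$, so the inequality holds for any $\beta>0$; hence I may assume $x\notin X$, in which case at least one constraint is violated and $\sum_{i=1}^m\dist(x,X_i)>0$. For the halfspace $X_i=\{y\in\R^n\mid\la a_i,y\ra\le b_i\}$ there is the explicit residual formula $\dist(x,X_i)=\frac{(\la a_i,x\ra-b_i)_+}{\|a_i\|}$, which serves as the bridge between distances and constraint violations.

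Next I would introduce $\bar x=\Pi_X[x]$, so that $\dist(x,X)=\|x-\bar x\|$. Because $X$ is polyhedral, the projection condition $x-\bar x\in N_X(\bar x)$ expresses $x-\bar x$ as a nonnegative combination of the constraint normals active at $\bar x$; by Carath\'eodory's theorem for cones I may take this combination over a subset $I\subseteq[m]$ for which $\{a_i\}_{i\in I}$ is linearly independent, i.e.\ $x-\bar x=\sum_{i\in I}\lambda_i a_i$ with $\lambda_i\ge0$ and $\la a_i,\bar x\ra=b_i$ for $i\in I$. Pairing this representation with $x-\bar x$ then yields the key identity
\[
\dist(x,X)^2=\sum_{i\in I}\lambda_i\big(\la a_i,x\ra-b_i\big)\le\sum_{i\in I}\lambda_i\|a_i\|\,\dist(x,X_i),
\]
where the inequality uses $\la a_i,x\ra-b_i\le(\la a_i,x\ra-b_i)_+=\|a_i\|\,\dist(x,X_i)$ together with $\lambda_i\ge0$.

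The main obstacle is to bound the multipliers $\lambda_i$ uniformly in $x$, since both the active set $I$ and the $\lambda_i$ vary with $\bar x$. This is where linear independence pays off: writing $A_I$ for the matrix whose rows are $a_i^\top$, $i\in I$, I can solve $\lambda_I=(A_IA_I^\top)^{-1}A_I(x-\bar x)$, whence $\|\lambda_I\|\le\|(A_IA_I^\top)^{-1}A_I\|\,\dist(x,X)$. As $[m]$ has only finitely many subsets, setting $M=\max_I\|(A_IA_I^\top)^{-1}A_I\|$ over all linearly independent collections gives a constant depending only on $a_1,\ldots,a_m$ with $\lambda_i\le M\,\dist(x,X)$ for all $i\in I$. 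Substituting into the key identity, and using $\|a_i\|\le\max_{j\in[m]}\|a_j\|$ and $\sum_{i\in I}\dist(x,X_i)\le\sum_{i=1}^m\dist(x,X_i)$, I obtain
\[
\dist(x,X)^2\le M\,\dist(x,X)\,\Big(\max_{j\in[m]}\|a_j\|\Big)\sum_{i=1}^m\dist(x,X_i).
\]
Dividing through by $\dist(x,X)>0$ then establishes the claim with $\beta=M\max_{j\in[m]}\|a_j\|$, a constant that indeed depends only on $a_1,\ldots,a_m$ as asserted.
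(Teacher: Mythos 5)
Your argument is correct, but it is not comparable to anything in the paper: the paper states this lemma as a classical result and simply cites Hoffman (1952) for it, offering no proof of its own. Your derivation --- project onto the polyhedron, express $x-\Pi_X[x]$ as a nonnegative combination of active constraint normals via the normal-cone characterization, reduce to a linearly independent subset by conic Carath\'eodory, and then bound the multipliers uniformly using $\lambda_I=(A_IA_I^\top)^{-1}A_I(x-\Pi_X[x])$ and the finiteness of the collection of subsets of $[m]$ --- is a standard and fully rigorous route to Hoffman's bound for systems of inequalities, and every step checks out: the residual formula $\dist(x,X_i)=(\la a_i,x\ra-b_i)_+/\|a_i\|$ is exact for halfspaces, the key identity $\dist(x,X)^2=\sum_{i\in I}\lambda_i(\la a_i,x\ra-b_i)$ follows from activeness of the constraints in $I$ at the projection, and the one-sided bound $\la a_i,x\ra-b_i\le\|a_i\|\dist(x,X_i)$ together with $\lambda_i\ge0$ closes the estimate. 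What your proof buys beyond the citation is an explicit, computable constant $\beta=\max_I\|(A_IA_I^\top)^{-1}A_I\|\cdot\max_j\|a_j\|$ and, importantly, a verification of the precise dependence asserted in the lemma: since you maximize over \emph{all} linearly independent subsets of $\{a_1,\ldots,a_m\}$ rather than only those active sets that actually occur, the constant manifestly depends on $a_1,\ldots,a_m$ alone and not on $b_1,\ldots,b_m$, which is exactly the form $\beta=\beta(a_1,\ldots,a_m)$ used throughout the paper.
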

The following lemma and its corollary provide some additional
properties of the penalty function $h_\d(\cdot;a,b)$ that we will use later on.
The proof can be found in Lemma~1 in~\cite{Penalty_siam}.

\begin{lem}\label{lem:penalty}
	Given a nonzero vector $a\in\R^n$ and a scalar $b\in\R$, 
	consider the penalty function $h_\d(\cdot;a,b)$ defined in~\eqref{eq:hfun} with $\d\ge0$.
	Let $Y=\{x\in\R^n\mid \la a,x\ra-b\le0\}.$ 
	Then, we have for $\d=0$, $h_0(x;a,b)=\dist(x,Y)$ for all $x\in\R^n$, 
	and for any $0\le \d \le \d'$,
	$h_\d(x;a,b) \le h_{\d'}(x;a,b)$ for all $x\in\R^n$.
\end{lem}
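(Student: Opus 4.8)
The plan is to reduce both claims to the scalar function $p_\d(\cdot)$ via the composition identity~\eqref{eq:handp}, namely $h_\d(x;a,b) = \frac{1}{\|a\|}p_\d(\la a,x\ra - b)$, so that it suffices to analyze $p_\d(s)$ as a function of $s$ and of the parameter $\d$.

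For the case $\d = 0$, I would first observe that letting $\d\downarrow 0$ collapses the middle branch of~\eqref{eq:sfun} and yields $p_0(s) = \max\{s,0\}$; hence $h_0(x;a,b) = \frac{1}{\|a\|}(\la a,x\ra - b)_+$. I would then identify this with $\dist(x,Y)$ by the standard point-to-halfspace computation: if $\la a,x\ra \le b$ then $x\in Y$ and the distance is $0$, matching $(\cdot)_+ = 0$; if $\la a,x\ra > b$, the nearest point of $Y$ lies on the bounding hyperplane $\{y:\la a,y\ra = b\}$, with $\Pi_Y[x] = x - \frac{\la a,x\ra - b}{\|a\|^2}a$, so that $\dist(x,Y) = \frac{\la a,x\ra - b}{\|a\|}$. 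Combining the two cases gives $\dist(x,Y) = \frac{(\la a,x\ra-b)_+}{\|a\|} = h_0(x;a,b)$.

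For the monotonicity claim, it suffices to show that for each fixed $s$ the map $\d\mapsto p_\d(s)$ is nondecreasing on $[0,\infty)$. I would argue this in one of two ways. The direct way splits on the sign of $s$: in each case $p_\d(s)$ is constant (equal to $s$, respectively $0$) while $s$ still lies in the linear, respectively zero, branch, and is given by the quadratic $g(\d) = \frac{(s+\d)^2}{4\d}$ once $\d$ is large enough that $s$ enters the middle branch; differentiating yields $g'(\d) = \frac{(s+\d)(\d-s)}{4\d^2}$, which is nonnegative precisely on the relevant range $\d \ge |s|$, and continuity at the breakpoint $\d = |s|$ glues the pieces together. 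A cleaner alternative is to use the dual representation $p_\d(s) = \sup_{0\le u\le 1}\{us + \d\,u(1-u)\}$, obtained by computing the conjugate $p_\d^*(u) = \d\,u(u-1)$ on $[0,1]$; since $u(1-u)\ge 0$ there, each inner expression is nondecreasing in $\d$, and taking the supremum preserves this, giving $p_\d(s)\le p_{\d'}(s)$ for $0\le\d\le\d'$ in one stroke.

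The only real subtlety, rather than a genuine obstacle, is the degenerate parameter value $\d = 0$, where the quadratic branch of~\eqref{eq:sfun} is formally a $0/0$ expression; this must be handled by continuity (the limit as $\d\downarrow 0$) both when identifying $p_0$ with $\max\{s,0\}$ and when extending the monotonicity down to $\d = 0$. Everything else is elementary case-checking, and the conjugate-based argument sidesteps the case analysis entirely.
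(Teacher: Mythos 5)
Your argument is correct and complete; note that the paper itself does not prove this lemma but defers to Lemma~1 of the cited reference \cite{Penalty_siam}, so there is no in-text proof to compare against. Your reduction to the scalar function $p_\delta$ via \eqref{eq:handp} is exactly the right move, and both of your routes to monotonicity check out: the direct case analysis is sound (the derivative $g'(\delta)=\frac{(s+\delta)(\delta-s)}{4\delta^2}$ is indeed nonnegative on $\delta\ge|s|$, and the pieces glue continuously at $\delta=|s|$ since $g(|s|)=\max\{s,0\}$), and the dual representation $p_\delta(s)=\sup_{0\le u\le 1}\{us+\delta\,u(1-u)\}$ is verified by a one-line computation of the unconstrained maximizer $u^*=\frac{s+\delta}{2\delta}$ and its clipping to $[0,1]$, which reproduces the three branches of \eqref{eq:sfun} exactly. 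The halfspace-projection computation for the $\delta=0$ case is the standard one and is correct. Your handling of the degenerate value $\delta=0$ by continuity is the only point requiring care, and you address it; one could equally just read the $\delta=0$ case of \eqref{eq:hfun} as the two outer branches alone (the middle branch covering only the single point $s=0$, where the value $0$ is forced by either one-sided limit), which is consistent with your reading. The conjugate-based argument is the more elegant of the two and generalizes immediately to any family of penalties whose conjugates are pointwise nonincreasing in the parameter.
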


The following corollary is an immediate consequence of Lemma~\ref{lem:penalty},
which shows that any
feasible point $\hat x\in X$ can be used to construct non-empty level sets of 
$F_{\g\d}(\cdot)$ and $f(\cdot)$.
The proof can be found in Corollary~2 in~\cite{Penalty_siam}.
\begin{cor}\label{cor:lset}
	Let $\g>0$ and $\d\ge 0$ be arbitrary, and let
	$\hat x$ be a feasible point for the original problem~\eqref{eq:problem}.
	Then, for the scalar $t_{\g\d}(\hat x)$ defined by
	$t_{\g\d}(\hat x)= f(\hat x)+\g\d/(4\a_{\min}),$
	the level set
	$\{x\in\R^n\mid F_{\g\d}(x)\le t_{\g\d}(\hat x)\}$ is nonempty and
	$\{x\in\R^n\mid F_{\g\d}(x)\le t_{\g\d}(\hat x)\}\subseteq
	\{x\in\R^n\mid f(x)\le t_{\g\d}(\hat x)\}$.
	Moreover, the solution set $X^*_{\g\d}$ of
	the penalized problem~\eqref{eq:pen-problem} is contained in the level set
	$\{x\in\R^n\mid f(x)\le t_{\g\d}(\hat x)\}$.
\end{cor}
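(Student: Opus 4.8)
The plan is to verify all three assertions directly from the elementary bounds already recorded for the penalty functions, namely the nonnegativity~\eqref{eq:hfunineq} and the feasible-point bound~\eqref{eq:hfunineq1}, together with the uniform estimate $\|a_i\|\ge\a_{\min}$. The entire argument reduces to evaluating $F_{\g\d}$ at the given feasible point $\hat x$ and then comparing $F_{\g\d}$ with $f$ pointwise, so no machinery beyond the definitions is required.

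First I would show that the level set $\{x\in\R^n\mid F_{\g\d}(x)\le t_{\g\d}(\hat x)\}$ is nonempty by checking that $\hat x$ itself belongs to it. Since $\hat x$ is feasible, $\la a_i,\hat x\ra\le b_i$ for every $i\in[m]$, so relation~\eqref{eq:hfunineq1} applies to each summand and yields $h_\d(\hat x;a_i,b_i)\le \d/(4\|a_i\|)\le \d/(4\a_{\min})$. Substituting this into the definition~\eqref{eq:penfun} of $F_{\g\d}$ gives
\[
F_{\g\d}(\hat x)=f(\hat x)+\frac{\g}{m}\sum_{i=1}^m h_\d(\hat x;a_i,b_i)\le f(\hat x)+\frac{\g}{m}\cdot m\cdot\frac{\d}{4\a_{\min}}=t_{\g\d}(\hat x),
\]
so $\hat x$ lies in the level set. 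Next I would establish the inclusion $\{x\mid F_{\g\d}(x)\le t_{\g\d}(\hat x)\}\subseteq\{x\mid f(x)\le t_{\g\d}(\hat x)\}$: for any $x$ in the former set, nonnegativity~\eqref{eq:hfunineq} of each $h_\d(x;a_i,b_i)$ forces $f(x)\le F_{\g\d}(x)\le t_{\g\d}(\hat x)$, which is the defining inequality of the latter set. Finally, the claim about the solution set $X^*_{\g\d}$ follows by combining these two steps: if $x^*\in X^*_{\g\d}$, then global optimality of $x^*$ together with the first step gives $F_{\g\d}(x^*)\le F_{\g\d}(\hat x)\le t_{\g\d}(\hat x)$, so $x^*$ lies in the level set of $F_{\g\d}$ and hence, by the inclusion just proved, in the level set of $f$.

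I do not expect a genuine obstacle here, since the argument is routine bookkeeping with the two penalty bounds. The only point requiring a little care is uniformizing the per-constraint estimate $\d/(4\|a_i\|)$ into the constraint-independent quantity $\d/(4\a_{\min})$; this is exactly where the definition $\a_{\min}=\min_{i\in[m]}\|a_i\|$ is invoked, and it is what makes the averaged sum collapse to $\g\d/(4\a_{\min})$ and thereby reproduce the stated threshold $t_{\g\d}(\hat x)$.
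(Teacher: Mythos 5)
Your proof is correct, and it is the natural argument: the paper itself defers this proof to Corollary~2 of the cited reference \cite{Penalty_siam}, but the route you take --- checking $\hat x$ lies in the level set via \eqref{eq:hfunineq1} and $\a_{\min}$, deducing the inclusion from nonnegativity \eqref{eq:hfunineq}, and then chaining $F_{\g\d}(x^*)\le F_{\g\d}(\hat x)\le t_{\g\d}(\hat x)$ for optimal $x^*$ --- is exactly the standard one this corollary is built on. No gaps.
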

Now, a lower bound on the value $H_k(x)$ at any $x\in\R^n$ follows from
Hoffman Lemma~\ref{lem-hoffman} and Lemma~\ref{lem:penalty}, as follows.
By Lemma~\ref{lem:penalty} with $\d=0$, $\d'=\d_k$, $a=a_i$, $b=b_i$, and $Y=X_i$, we obtain $h_k(x;a_i,b_i)\ge h_0(x;a_i,b_i)=\dist(x,X_i)\quad\hbox{for all } x\in\R^n$ for all $i\in[m]$.
Therefore, for all $x\in\R^n$,
$H_k(x)=\frac{1}{m}\sum_{i=1}^{m} h_k(x; a_i,b_i)
\ge \frac{1}{m}\sum_{i=1}^{m} \dist(x,X_i)$.
By using Hoffman Lemma~\ref{lem-hoffman}, we obtain
\begin{equation}\label{eq-hk-atanyx}
	H_k(x)\ge \frac{1}{m\b} \dist(x,X)\quad\hbox{for all } x\in\R^n.
\end{equation}

The following result provides an upper bound on the distance of $x_k^*$ from the feasible set $X$,
and shows that this distance goes to 0 if $\g_k$ tends to infinity and $\d_k$ tends to 0. 

\begin{prop}\label{prop-distancetofeas}
	Let $f(\cdot)$ be continuous with bounded lower-level sets, 
	and let $\g_k>0$, $\d_k>0$, and $\g_k\d_k\le c$ for all $k$ and for some $c>0$.
	Then, for arbitrary $\hat x\in X$ and all $k$,
	\[\dist(x_k^*,X) \le \frac{m\b}{\g_k} (f(\hat x) - f(x_k^*))+\frac{m\b\d_k}{4\a_{\min}},\]
	where 
	$\beta$ is the Hoffman constant from Lemma~\ref{lem-hoffman}. In particular, 
	$\lim_{k\to\infty}\dist(x_k^*,X)=0$ as  $\g_k\to\infty$  and  $\d_k\to0$,
	with the convergence rate of the order $O(\g_k^{-1} +\d_k)$.
\end{prop}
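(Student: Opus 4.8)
The plan is to exploit the optimality of $x_k^*$ for the penalized problem together with the two-sided bounds on the average penalty $H_k$ already established in \eqref{eq-hk-atfeasx} and \eqref{eq-hk-atanyx}. First I would fix an arbitrary feasible point $\hat x\in X$ and write down the defining inequality $F_k(x_k^*)\le F_k(\hat x)$, which by \eqref{eq-penfun-hk} reads
\[
f(x_k^*)+\g_k H_k(x_k^*)\le f(\hat x)+\g_k H_k(\hat x).
\]
Substituting the lower bound $H_k(x_k^*)\ge \frac{1}{m\b}\dist(x_k^*,X)$ from \eqref{eq-hk-atanyx} on the left-hand side and the upper bound $H_k(\hat x)\le \frac{\d_k}{4\a_{\min}}$ from \eqref{eq-hk-atfeasx} on the right-hand side, then cancelling the common $f(x_k^*)$ contribution is not needed here; rather I isolate $\frac{\g_k}{m\b}\dist(x_k^*,X)$ and multiply through by $m\b/\g_k>0$. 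This yields the claimed inequality directly. This first part is purely algebraic and I expect no difficulty in it.

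The substantive part is the limiting statement, and the obstacle is to control the term $f(\hat x)-f(x_k^*)$ uniformly in $k$: a priori $x_k^*$ is infeasible and could wander into a region where $f$ is very small, so that $f(\hat x)-f(x_k^*)$ might fail to stay bounded as $\g_k\to\infty$, destroying the $O(\g_k^{-1})$ behavior. This is exactly where the hypothesis $\g_k\d_k\le c$ enters. I would invoke Corollary~\ref{cor:lset} with $\g=\g_k$ and $\d=\d_k$, which places every $x_k^*\in X_k^*$ inside the level set $\{x\mid f(x)\le t_{\g_k\d_k}(\hat x)\}$, where $t_{\g_k\d_k}(\hat x)=f(\hat x)+\g_k\d_k/(4\a_{\min})$. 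Since $\g_k\d_k\le c$, this threshold is bounded above by the $k$-independent constant $f(\hat x)+c/(4\a_{\min})$, so all the iterates $x_k^*$ are confined to a single fixed lower-level set of $f$.

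Because $f$ is continuous with bounded lower-level sets, that fixed set is compact, and $f$ attains a finite global minimum $f_{\min}=\min_{x\in\R^n}f(x)>-\infty$; in particular $f(x_k^*)\ge f_{\min}$ for all $k$, so $f(\hat x)-f(x_k^*)\le f(\hat x)-f_{\min}$, a constant independent of $k$. Feeding this uniform bound into the inequality, together with $\dist(x_k^*,X)\ge 0$, gives
\[
0\le \dist(x_k^*,X)\le \frac{m\b\,(f(\hat x)-f_{\min})}{\g_k}+\frac{m\b\d_k}{4\a_{\min}},
\]
and both terms on the right vanish as $\g_k\to\infty$ and $\d_k\to0$. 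Reading off the two summands also delivers the asserted rate $O(\g_k^{-1}+\d_k)$, completing the argument. The only genuine subtlety is recognizing that the product condition $\g_k\d_k\le c$ is precisely the mechanism that keeps the solutions $x_k^*$ trapped in one bounded level set and thereby prevents $f(\hat x)-f(x_k^*)$ from blowing up.
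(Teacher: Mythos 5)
Your proposal is correct and follows essentially the same route as the paper: the key inequality is obtained exactly as in Appendix~A from $F_k(x_k^*)\le F_k(\hat x)$ together with \eqref{eq-hk-atfeasx} and \eqref{eq-hk-atanyx}, and the uniform control of $f(\hat x)-f(x_k^*)$ rests on the same level-set confinement (the paper routes it through Lemma~\ref{lem-solsbded}, whose proof is precisely your appeal to Corollary~\ref{cor:lset} plus $\g_k\d_k\le c$). Your observation that $f(x_k^*)\ge f_{\min}=\min_{x\in\R^n}f(x)$ is a slight shortcut --- it already follows from continuity and bounded lower-level sets without confining $x_k^*$ at all --- but this does not change the substance of the argument.
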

We refer the reader to Appendix~\ref{app:distancetofeas} for the proof.

\subsection{Convex Function $f$ not Necessarily Differentiable}\label{subsec-var-par-b}
Here, we consider the case when $f(\cdot)$ is convex on $\R^n$. Since the domain of $f(\cdot)$ is $\R^n$, 
the function $f$ is continuous. The subdifferential set $\partial f(x)$ is nonempty at every $x\in\R^n$
since $f(\cdot)$ is convex and $\dom(f)=\R^n$. 
The function $f(\cdot)$ is strongly convex with a constant $\mu>0$ if and only if 
for all $u,v\in\R^n$ and all subgradients $\tilde\nabla f(x)\in\partial f(x)$, we have
\begin{equation}\label{eq-strconvex}
	f(v)+\la \tilde\nabla f(v),u-v\ra +\frac{\mu}{2}\|u-v\|^2\le f(u).
\end{equation}
Moreover, when $f(\cdot)$ is strongly convex with a constant $\mu$, we also have for all $x,y\in\R^n$, and all subgradients $\tilde\nabla f(x)\in\partial f(x)$
and $\tilde\nabla f(y)\in\partial f(y)$,
\begin{equation}\label{eq-subgrad-strong}
	\mu\|x-y\|^2\le \la \tilde \nabla f(x)-\tilde \nabla f(y),x-y\ra.
\end{equation}
If $f(\cdot)$ is just convex relations~\eqref{eq-strconvex} and~\eqref{eq-subgrad-strong} hold with $\m=0$.
To capture both cases when $f$ is strongly convex and when $f$ is just convex, 
we will sometimes abuse the definition of strong convexity in~\eqref{eq-strconvex} by allowing 
the possibility that $\mu=0$.

When $f(\cdot)$ is strongly convex with $\mu>0$, then so is every penalty function $F_k(\cdot)$ in~\eqref{eq-penalized-function} with the same $\mu>0$.
In this case, 
the original problem~\eqref{eq:problem} and each penalized problem $\min_{x\in\R^n}F_k(x)$, $k\ge1$, have unique solutions, respectively, denoted by $x^*\in X$ and $x_k^*\in\R^n$, respectively. 
Moreover, under mild conditions on the penalty parameters $\d_k$ and $\g_k$, the optimal set sequence $\{X_k^*\}$ is uniformly bounded in the case of continuous function with bounded lower-level sets, as seen in the following lemma. Its proof relies on Corollary~\ref{cor:lset} and is provided in Appendix~\ref{app:solsbded}.
\begin{lem}\label{lem-solsbded}
	Let $f(\cdot)$ be continuous and have bounded lower-level sets.
	Let $\g_k>0$, 
	$\d_k>0$, and $\g_k\d_k\le c$ for all $k\ge 1$ and for some $c>0$.
	Then, the optimal set $X_k^*$ for the penalized problem $\min_{x\in \R^n} F_k(x)$ 
	is nonempty compact set and
	$X_k^*\subseteq\left\{x\in\R^n\mid f(x)\le t_c(\hat x)\right\}$ for all~$k$, where 
	$t_c(\hat x)=f(\hat x)+c/(4\a_{\min})$, with $\hat x\in X$. 
	In particular, the set sequence $\{X_k^*\}$ is uniformly bounded. 
\end{lem}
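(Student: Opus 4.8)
The plan is to deduce everything from Corollary~\ref{cor:lset}, applied for each fixed $k$ with $\g=\g_k$ and $\d=\d_k$, so that $F_{\g\d}=F_k$ and the solution set $X^*_{\g\d}$ equals $X_k^*$. The key observation is that the threshold appearing in that corollary, $t_{\g_k\d_k}(\hat x)=f(\hat x)+\g_k\d_k/(4\a_{\min})$, is uniformly controlled: since $\g_k\d_k\le c$, we have $t_{\g_k\d_k}(\hat x)\le f(\hat x)+c/(4\a_{\min})=t_c(\hat x)$ for every $k$. Thus a single level set of $f$, namely $\{x\in\R^n\mid f(x)\le t_c(\hat x)\}$, dominates all the individual bounds supplied by the corollary.

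First I would establish that each $X_k^*$ is nonempty and compact. Because $H_k(\cdot)\ge0$ by~\eqref{eq:hfunineq}, we have $F_k(x)\ge f(x)$ for all $x$, so every lower-level set of $F_k$ is contained in the corresponding lower-level set of $f$ and is therefore bounded; it is also closed since $F_k$ is continuous (a sum of the continuous $f$ and the continuous penalties). Corollary~\ref{cor:lset} guarantees that $S_k=\{x\mid F_k(x)\le t_{\g_k\d_k}(\hat x)\}$ is nonempty, and by the previous remark $S_k$ is compact, so $F_k$ attains a minimum on $S_k$ at some $x_k^*$ by Weierstrass' theorem. Since every point outside $S_k$ has $F_k$-value exceeding $t_{\g_k\d_k}(\hat x)\ge F_k(x_k^*)$, this $x_k^*$ is in fact a global minimizer, whence $X_k^*\ne\emptyset$. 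Finally, writing $F_k^*=\min_{x\in\R^n}F_k(x)$, the set $X_k^*=\{x\mid F_k(x)\le F_k^*\}$ is a closed lower-level set contained in $\{x\mid f(x)\le F_k^*\}\subseteq\{x\mid f(x)\le t_c(\hat x)\}$ (using $F_k^*\le t_{\g_k\d_k}(\hat x)\le t_c(\hat x)$), hence compact.

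For the inclusion, Corollary~\ref{cor:lset} already yields $X_k^*\subseteq\{x\mid f(x)\le t_{\g_k\d_k}(\hat x)\}$, and by the monotonicity noted above, $t_{\g_k\d_k}(\hat x)\le t_c(\hat x)$, this is contained in $\{x\mid f(x)\le t_c(\hat x)\}$, giving the claimed uniform inclusion $X_k^*\subseteq\{x\mid f(x)\le t_c(\hat x)\}$ for all $k$. The uniform boundedness of $\{X_k^*\}$ is then immediate: all these sets lie in the single lower-level set $\{x\mid f(x)\le t_c(\hat x)\}$, which is bounded by the standing assumption that $f$ has bounded lower-level sets.

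I expect no serious obstacle; the lemma is essentially a bookkeeping consequence of Corollary~\ref{cor:lset} combined with the nonnegativity~\eqref{eq:hfunineq} and the uniform cap $\g_k\d_k\le c$. The only point requiring care is the reduction from ``minimizer over the compact level set $S_k$'' to ``global minimizer of $F_k$ on $\R^n$'', which hinges on $\hat x\in S_k$ (so that $S_k\ne\emptyset$ and $\min_{S_k}F_k\le t_{\g_k\d_k}(\hat x)$) together with $F_k>t_{\g_k\d_k}(\hat x)$ off $S_k$; the membership $\hat x\in S_k$ follows from $H_k(\hat x)\le\d_k/(4\a_{\min})$, which is~\eqref{eq-hk-atfeasx}.
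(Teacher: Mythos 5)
Your proof is correct and follows essentially the same route as the paper's: both arguments rest on Corollary~\ref{cor:lset}, the monotonicity $t_{\g_k\d_k}(\hat x)\le t_c(\hat x)$ coming from the cap $\g_k\d_k\le c$, the bounded lower-level sets of $f$, and a Weierstrass argument on the compact level set $\{x\mid F_k(x)\le t_{\g_k\d_k}(\hat x)\}$ to get nonemptiness. The only cosmetic difference is that you invoke $F_k\ge f$ (from $H_k\ge0$) directly in a couple of places where the paper cites the inclusion already packaged in Corollary~\ref{cor:lset}.
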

\begin{rem}\label{remsgd-bound}
	When the conditions of Lemma~\ref{lem-solsbded} are satisfied, the optimal solutions of the penalized problems
	$\min_{x\in\R^n} F_k(x)$, for all $k\ge1$, 
	are uniformly bounded, i.e., there exists $D>0$ such that 
	$\|x_k^*\|\le D$ for all $x_k^*\in X_k^*$ and all $k\ge1$.
	Therefore, the projections $\Pi_X[x_k^*]$ of these optimal solutions on the feasible set $X$
	are also uniformly bounded, i.e., there exists $R>0$ such that 
	$\|\Pi_X[x_k^*]\|\le R$ for all $x_k^*\in X_k^*$ and all $k\ge1$.
	Hence, the subgradients $\tilde \nabla f(x)\in\partial f(x)$, for all $x$ with $\|x\|\le R$,
	are bounded, i.e., \begin{equation}\label{eq:sgdbound}
		L=\max_{\|x\|\le R}\{\|\tilde \nabla f(x)\|\,\mid \tilde \nabla f(x)\in\partial f(x)\}<\infty.
	\end{equation} 
\end{rem}

We next provide a set of conditions on parameters $\d_k$ and $\g_k$ ensuring
that the sequence $\{x_k^*\}$ converges to $x^*$ as $k\to\infty$ when $f(\cdot)$ is strongly convex. 
When $f(\cdot)$ is just convex, we obtain a special bound on $\dist(x_k^*,X)$ for any solution $x_k^*\in X_k^*$ to the penalized problem. 
This bound yields an improved convergence rate of $\dist(x_k^*,X)\to 0$ compared to 
that for a general function $f(\cdot)$ provided in Proposition~\ref{prop-distancetofeas}.

\begin{prop}\label{prop-sols}
	Let $f(\cdot)$ be strongly convex with $\mu\ge0$. If $\mu=0$, assume that $f(\cdot)$
	has bounded lower-level sets.
	Let $\g_k>0$, $\d_k>0$, and $\g_k\d_k\le c$ for all $k$ and some $c>0$. 
	Then, for all $k$, we have\vspace{-0.25cm}
	\begin{align*}
		&\frac{\mu}{2}\|x^* -x_k^*\|^2 + \frac{\mu}{2}\|x^*-\Pi_X[x_k^*]\|^2  +\left(\frac{\mu}{2}+\frac{\g_k}{m\b} -L\right)\dist(x_k^*,X)
		\le \frac{\g_k\d_k}{4\a_{\min}},
	\end{align*}
	where $L$ is given by~\eqref{eq:sgdbound}.
\end{prop}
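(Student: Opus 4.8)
The plan is to compare the penalized minimizer $x_k^*$ with the true optimum $x^*$ through the strong convexity of $F_k(\cdot)$ together with the two-sided control on the average penalty $H_k(\cdot)$. Since each $h_k(\cdot;a_i,b_i)$ is convex, $F_k=f+\g_k H_k$ inherits strong convexity with the same constant $\mu\ge0$, and $x_k^*$ is its (unique, when $\mu>0$) minimizer, so that $0\in\partial F_k(x_k^*)$. Applying the strong convexity inequality~\eqref{eq-strconvex} to $F_k$ at its minimizer $x_k^*$ (where the subgradient term vanishes) with test point $x^*$ gives $F_k(x^*)\ge F_k(x_k^*)+\frac{\mu}{2}\|x^*-x_k^*\|^2$. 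I would then insert the feasibility bound $H_k(x^*)\le \d_k/(4\a_{\min})$ from~\eqref{eq-hk-atfeasx} (valid since $x^*\in X$) to bound $F_k(x^*)$ from above, and the Hoffman-based bound $H_k(x_k^*)\ge \frac{1}{m\b}\dist(x_k^*,X)$ from~\eqref{eq-hk-atanyx} to bound $F_k(x_k^*)$ from below. This turns the previous estimate into
\[
\frac{\g_k\d_k}{4\a_{\min}}\ \ge\ \frac{\mu}{2}\|x^*-x_k^*\|^2+\frac{\g_k}{m\b}\dist(x_k^*,X)+\bigl(f(x_k^*)-f(x^*)\bigr).
\]

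The crux is then to lower bound the (possibly negative) gap $f(x_k^*)-f(x^*)$, which is delicate precisely because $x_k^*$ need not be feasible. I would introduce the projection $\bar x_k=\Pi_X[x_k^*]\in X$, so that $\|x_k^*-\bar x_k\|=\dist(x_k^*,X)$, and split $f(x_k^*)-f(x^*)=\bigl(f(\bar x_k)-f(x^*)\bigr)+\bigl(f(x_k^*)-f(\bar x_k)\bigr)$. For the first bracket, since $x^*$ minimizes $f$ over $X$ and $\bar x_k\in X$, the first-order optimality condition furnishes a subgradient with $\la\tilde\nabla f(x^*),\bar x_k-x^*\ra\ge0$, so~\eqref{eq-strconvex} yields $f(\bar x_k)-f(x^*)\ge \frac{\mu}{2}\|x^*-\bar x_k\|^2$. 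For the second bracket I would use~\eqref{eq-strconvex} at $\bar x_k$ together with the bounded-subgradient constant $L$ from~\eqref{eq:sgdbound}: because $\|\bar x_k\|=\|\Pi_X[x_k^*]\|\le R$ by Remark~\ref{remsgd-bound}, any $\tilde\nabla f(\bar x_k)$ satisfies $\|\tilde\nabla f(\bar x_k)\|\le L$, whence, by Cauchy--Schwarz, $f(x_k^*)-f(\bar x_k)\ge \la\tilde\nabla f(\bar x_k),x_k^*-\bar x_k\ra+\frac{\mu}{2}\dist^2(x_k^*,X)\ge -L\,\dist(x_k^*,X)+\frac{\mu}{2}\dist^2(x_k^*,X)$.

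Substituting these two lower bounds into the displayed inequality and collecting terms gives
\[
\frac{\mu}{2}\|x^*-x_k^*\|^2+\frac{\mu}{2}\|x^*-\bar x_k\|^2+\Bigl(\tfrac{\mu}{2}\dist(x_k^*,X)+\tfrac{\g_k}{m\b}-L\Bigr)\dist(x_k^*,X)\le \frac{\g_k\d_k}{4\a_{\min}},
\]
which yields the claimed estimate once the strong-convexity contribution $\frac{\mu}{2}\dist^2(x_k^*,X)$ of the projection step is grouped into the coefficient of $\dist(x_k^*,X)$. The main obstacle is exactly this handling of the infeasibility gap $f(x_k^*)-f(x^*)$: one must route the comparison through the feasible point $\bar x_k$ and balance the negative $-L\,\dist(x_k^*,X)$ term, coming from the uncontrolled sign of the subgradient inner product, against the penalty growth $\frac{\g_k}{m\b}\dist(x_k^*,X)$; the remaining strong-convexity terms are nonnegative and only strengthen the estimate. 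When $\mu=0$ the squared terms drop, and the bounded-lower-level-set hypothesis is what guarantees, via Lemma~\ref{lem-solsbded} and Remark~\ref{remsgd-bound}, that $R$ and hence $L$ are finite, so that the same chain of inequalities remains valid.
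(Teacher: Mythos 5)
Your proposal is correct and follows essentially the same route as the paper's proof in Appendix~C: strong convexity of $F_k$ at its minimizer, the bounds \eqref{eq-hk-atfeasx} and \eqref{eq-hk-atanyx} on $H_k$, and the decomposition of the optimality gap through $\Pi_X[x_k^*]$ with the subgradient bound $L$. Note that both your derivation and the paper's actually produce the (stronger) term $\frac{\mu}{2}\dist^2(x_k^*,X)$, so the coefficient $\frac{\mu}{2}$ in the stated inequality is a slight mismatch already present in the paper itself, not an error on your part.
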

The proof is in Appendix~\ref{app:sols}.

Proposition~\ref{prop-sols} indicates that, when $\mu>0$,
by letting $\g_k\to\infty$,
we will have $\frac{\g_k}{4m\b}\ge L$ for all large enough $k$,
implying\vspace{-0.25cm}
\begin{align}\label{eq-disttofeas-convex}
	\dist(x_k^*,X)&\le \frac{\g_k\d_k}{4\a_{\min} \left(\frac{\mu}{2}+\frac{\g_k}{4m\b}-L\right)}\approx O(\d_k).
\end{align}
Thus, if $\d_k\to0$,  the distance of $x_k^*$ to the feasible set $X$ will go to 0 asymptitically
at the rate of  $O(\d_k)$, independent of $\g_k$. 
The preceding relation holds with $\mu=0$ if $f(\cdot)$ is merely convex. 
The convergence rate order is better than $O(\g_k^{-1})+O(\d_k)$ obtained for a general $f(\cdot)$ 
in Proposition~\ref{prop-distancetofeas}.

When $f(\cdot)$ is strongly convex with $\mu>0$, 
Proposition~\ref{prop-sols} shows that $\|x^* -x_k^*\|^2\le  \frac{\g_k\d_k}{2\mu\a_{\min}}$ for large enough $k$.
If additionally $\g_k\d_k\to0$, then the solutions $x_k^*$ of the penalized problems converge to
the optimal solution $x^*$ of the original problem, with the rate in the order of $O(\g_k\d_k)$. 

The common requirement for $\dist(x_k^*,X)\to0$, as $k\to\infty$, in Proposition~\ref{prop-distancetofeas} and Proposition~\ref{prop-sols} is that $\d_k\to0$.
The main difference between these propositions is in the requirement for the penalty parameter $\g_k$. 
Specifically,
to ensure $\dist(x_k^*,X)\to0$, the penalty $\g_k$ has to increase to $+\infty$
for a general function~$f(\cdot)$ (Proposition~\ref{prop-distancetofeas}). 
In contrast, 
to ensure that $\dist(x_k^*,X)\to0$ for a convex function $f(\cdot)$, one can choose a fixed penalty value $\g_k=\g$, for all $k$, with $\gamma$ large enough so that $\frac{\g}{4m\b}> L$, as seen from relation~\eqref{eq-disttofeas-convex}. 
However, determining such a value of $\g$ is challenging as it is hard to obtain upper estimates for the subgradient norm bound $L$ and the Hoffman constant $\b$. Estimating $L$ requires knowing a region that contains the projections of the solutions to the penalized problems on the feasible set (see~\eqref{eq:sgdbound}). Determining an upper estimate of the Hoffman constant $\b$ is also a difficult problem, which has recently been addressed in~\cite{pena2021} via computational approaches.

Based on Proposition~\ref{prop-sols}, one can construct a two-loop iterative approach to compute
the optimal point $x^*$ of the original problem {in the case of strongly convex $f$}. The outer loop is on the index $k$ where the penalty values $\g_k$ and $\d_k$ are set.
For any given $k$, the inner loop of iterations compute the optimal point $x_k^*$ for the penalized problem $\min_{x\in \R^n}F_k(x)$. 
This naive two-loop approach is quite inefficient. Later in Section~\ref{sec-algo}, we propose a more efficient single-loop  algorithm,
where at each iteration $k$, we 
adjust the parameters $\g_k$ and $\d_k$, and use a (stochastic) gradient of the penalty function $F_k(\cdot)$ for the update.

\section{Random Incremental Penalty Algorithm}\label{sec-algo}
Assuming that the function $f(\cdot)$ is convex over $\R^n$,
we consider an algorithm that takes one gradient step for minimizing $F_k(\cdot)$ at iteration $k$,
as opposed to determining $x^*_k$ for each function $F_k(\cdot)$.
To deal with the large number of component functions $h_k(\cdot;a_{i},b_{i})$ involved in
$F_k(\cdot)$, we consider a random incremental subgradient algorithm using only 
one randomly chosen constraint (indexed by $i_k$) to estimate
a subgradient $\tilde \nabla F_k(x_k)\in\partial F_k(x_k)$ at iteration $k$, when $x_k$ is available.
This estimation is employed to construct $x_{k+1}$ via {\textit {random penalty}} corresponding to subgradient sampling,
as opposed to determining a full subgradient of $F_k(x_k)$.

We apply random incremental update to the penalty function~\eqref{eq-penalized-function} 
represented in the following form:
\[F_k(x) = \frac{1}{m} \sum_{i=1}^m\left( f(x) + \g_k  h_k\left(x; a_i, b_i\right) \right),\]
and the random incremental penalty method is:
for $k\ge 1$,
\begin{equation}\label{eq:gradmet}
	x_{k+1} = x_k-s_k [\tilde \nabla f(x_k) + \g_k\nabla h_k(x_k;a_{i_k},b_{i_k})],
\end{equation}
where $s_k>0$ is a stepsize, $\tilde \nabla f(x_k)$ is a subgradient of $f(\cdot)$ at $x=x_k$, 
and the index $i_k\in\{1,\ldots,m\}$ is chosen uniformly at random at every iteration $k$.
\an{The algorithm is initiated with a random initial point $x_1\in\R^n$, for which we assume that 
	$\ex{\|x_1\|^2}<\infty$.} 

Note that $\tilde \nabla f(x_k) + \g_k\nabla h_k(x_k;a_{i_k},b_{i_k})$  is an unbiased estimate of 
a subgradient $\tilde \nabla F_k(x_k) \in\partial F_k(x_k)$, 
since by the uniform distribution of $i_k$ we have
\begin{align}\label{eq-nobias}
	\ex{\tilde \nabla f(x_k) + \g_k\nabla h_k(x;a_{i_k},b_{i_k})|\EuScript F_k}=\tilde \nabla F_k(x_k),
\end{align}
where $\EuScript F_k$ is the $\sigma$-algebra generated by the random variables 
$\{i_j,\, 1 \le j\le k-1\}$ and the random initial iterate $x_1$, which is equivalent to 
the $\sigma$-algebra $\EuScript F_k = \sigma(\{x_t\}_{t=1}^{k})$ for all $k\ge 1$.

Unlike the standard random incremental method, 
the random incremental penalty method in~\eqref{eq:gradmet} selects one 
random component from the time-varying function $F_k(\cdot)$.
This makes the analysis of the method more challenging since the iterates $\{x_k\}$ need not be feasible for the original problem,
which poses difficulties, and the penalty parameters $\g_k$ and $\d_k$ have to be carefully tuned
to ensure the convergence of the iterates.

\subsection{Preliminary results}

We first establish a basic result for the iterates $x_k$ of the random method~\eqref{eq:gradmet}.
\begin{lem}\label{lem-basic-iter}
	Let $f(\cdot)$ be strongly convex with $\mu\ge 0$, and 
	let $\g_k>0$, $\d_k>0$, and $s_k>0$ for all $k$.
	Then, the iterates $x_k$ of the method~\eqref{eq:gradmet} surely satisfy 
	for all $y\in X$ and $k\ge1$,
	\begin{align*}
		&\|x_{k+1}- y\|^2 \le(1-\mu s_k)\|x_k - y\|^2 
		+2s_k(f(y)-f(x_k))\cr
		&+\frac{s_k\g_k \d_k}{2\a_{\min}} - 2s_k\g_k\dist(x_k,X_{i_k})
		+s_k^2(\|\tilde\nabla f(x_k)\| + \g_k)^2.
	\end{align*}
\end{lem}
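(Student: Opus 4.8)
The plan is to expand the squared distance $\|x_{k+1}-y\|^2$ using the update rule~\eqref{eq:gradmet} and then bound the resulting terms one by one. Writing $g_k=\tilde\nabla f(x_k)+\g_k\nabla h_k(x_k;a_{i_k},b_{i_k})$ for the stochastic search direction, the expansion gives
\[
\|x_{k+1}-y\|^2=\|x_k-y\|^2-2s_k\la g_k,x_k-y\ra+s_k^2\|g_k\|^2,
\]
and the cross term splits into a contribution from the subgradient of $f$ and one from the penalty gradient, namely $\la g_k,x_k-y\ra=\la\tilde\nabla f(x_k),x_k-y\ra+\g_k\la\nabla h_k(x_k;a_{i_k},b_{i_k}),x_k-y\ra$. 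The entire argument is deterministic once $i_k$ is fixed, which is why the stated bound holds \emph{surely}.

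First I would handle the $f$-part of the cross term. Applying the (possibly degenerate, $\mu\ge0$) strong convexity inequality~\eqref{eq-strconvex} with $v=x_k$ and $u=y$ yields $\la\tilde\nabla f(x_k),x_k-y\ra\ge f(x_k)-f(y)+\frac{\mu}{2}\|x_k-y\|^2$. Multiplying by $-2s_k$ produces exactly the terms $2s_k(f(y)-f(x_k))$ together with $-\mu s_k\|x_k-y\|^2$, the latter of which combines with the leading $\|x_k-y\|^2$ to form the factor $(1-\mu s_k)$.

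Next I would treat the penalty part. Since each $h_k(\cdot;a_i,b_i)$ is convex, the subgradient inequality gives $\la\nabla h_k(x_k;a_{i_k},b_{i_k}),x_k-y\ra\ge h_k(x_k;a_{i_k},b_{i_k})-h_k(y;a_{i_k},b_{i_k})$, so that $-2s_k\g_k$ times this quantity is bounded above by $2s_k\g_k\bigl(h_k(y;a_{i_k},b_{i_k})-h_k(x_k;a_{i_k},b_{i_k})\bigr)$. The crucial step is to estimate the two penalty values in opposite directions: because $y\in X\subseteq X_{i_k}$, relation~\eqref{eq:hfunineq1} together with $\|a_{i_k}\|\ge\a_{\min}$ gives the upper bound $h_k(y;a_{i_k},b_{i_k})\le\d_k/(4\a_{\min})$, while Lemma~\ref{lem:penalty} (applied with $\d=0$, $\d'=\d_k$, $a=a_{i_k}$, $b=b_{i_k}$, $Y=X_{i_k}$) gives the lower bound $h_k(x_k;a_{i_k},b_{i_k})\ge\dist(x_k,X_{i_k})$. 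Substituting these yields precisely the terms $\frac{s_k\g_k\d_k}{2\a_{\min}}$ and $-2s_k\g_k\dist(x_k,X_{i_k})$.

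Finally, for the quadratic term $s_k^2\|g_k\|^2$, I would apply the triangle inequality and then Lemma~\ref{lem:pderiv}, which gives $\|\nabla h_k(x_k;a_{i_k},b_{i_k})\|\le1$, to obtain $\|g_k\|\le\|\tilde\nabla f(x_k)\|+\g_k$ and hence $s_k^2\|g_k\|^2\le s_k^2(\|\tilde\nabla f(x_k)\|+\g_k)^2$. Collecting the four contributions produces the claimed inequality. I do not expect any genuine obstacle here, as this is a standard subgradient-descent-type estimate; the only point requiring care is orienting the convexity and feasibility bounds so that the upper bound is taken at the feasible point $y$ and the lower bound at the current iterate $x_k$, ensuring that the $\dist(x_k,X_{i_k})$ term emerges with the correct (negative) sign.
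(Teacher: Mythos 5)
Your proposal is correct and follows essentially the same route as the paper's proof: expand the square, apply the (possibly degenerate) strong convexity inequality to the $f$-part of the cross term, use convexity of $h_k$ together with the bounds~\eqref{eq:hfunineq1} at the feasible point and Lemma~\ref{lem:penalty} at $x_k$ for the penalty part, and bound $\|g_k\|$ via Lemma~\ref{lem:pderiv}. No gaps.
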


{\it Proof}:
By the definition of $x_{k+1}$ in~\eqref{eq:gradmet}, we surely have 
for any $y\in X$ and all $k\ge1$,
\begin{align*}
	\|x_{k+1}- y\|^2 =\|x_k - y\|^2 &-2s_k\la g_k(x_k),x_k-y\ra +s_k^2\| g_k(x_k)\|^2,
\end{align*}
with $g_k(x_k)=\tilde\nabla f(x_k) + \g_k\nabla h_k(x_k;a_{i_k},b_{i_k})$.
By the convexity of $h_k(\cdot;a_{i_k},b_{i_k})$, we have
for all $y\in X$ and all $k\ge1$,
\begin{align*}
	&\|x_{k+1}- y\|^2 \le\|x_k - y\|^2 
	-2s_k\la \tilde\nabla f(x_k), x_k - y\ra \cr
	&+2s_k \g_k(h_k(y;a_{i_k},b_{i_k})-h_k(x_k;a_{i_k},b_{i_k}))
	+s_k^2\| g_k(x_k)\|^2.
\end{align*}
Since $y$ is feasible, we have $h_k(y;a_{i_k},b_{i_k})\le \d_k / 4\a_{\min}$, with $\a_{\min}=\min_{i\in[m]}\|a_i\|$,
which follows by relation~\eqref{eq:hfunineq1} (where $\d=\d_k$ and $x=y$).
By the monotonicity of $h_\d(\cdot;a_i,b_i)$ with respect to $\d$, we have that
$\dist(x,X_i)=h_0(x;a_{i},b_{i})\le h_{\d_k}(x;a_i,b_i)$ for all $x$ and $i$
(see Lemma~\ref{lem:penalty} where $\d=0$, $\d'=\d_k$).
Hence, it follows that for all $y\in X$ and all $k\ge1$,
\begin{align*}
	&\|x_{k+1}- y\|^2 \le\|x_k - y\|^2 
	-2s_k\la \tilde\nabla f(x_k), x_k - y\ra\cr
	&+\frac{s_k\g_k \d_k}{2\a_{\min}} - 2s_k\g_k\dist(x_k,X_{i_k})
	+s_k^2\| g_k(x_k)\|^2.
\end{align*}
By the strong convexity relation (where $u=y$, $v=x_k$), it follows that 
\[-2s_k\la \tilde\nabla f(x_k), x_k - y\ra\le 2s_k(f(y)-f(x_k))-\mu s_k\|x_k-y\|^2.\]
By combining the preceding two relations,
we obtain that, surely, for all $y\in X$ and all $k\ge1$,
\begin{align*}
	&\|x_{k+1}- y\|^2 \le(1-\mu s_k)\|x_k - y\|^2 
	+2s_k(f(y)-f(x_k))\cr
	&+\frac{s_k\g_k \d_k}{2\a_{\min}} - 2s_k\g_k\dist(x_k,X_{i_k})
	+s_k^2\| g_k(x_k)\|^2.
\end{align*}
To estimate $\| g_k(x_k)\|$, we write
\[\|g_k(x_k)\|\le \|\tilde \nabla f(x_k)\| + \g_k\|\nabla h_k(x_k;a_{i_k},b_{i_k})\|
\le \|\tilde \nabla f(x_k)\| + \g_k,\]
where the last inequality uses the fact that 
$\|\nabla h_k(x;a_{i},b_{i})\|\le 1$ for all $x$ and all $i$ (see Lemma~\ref{lem:pderiv}).
The stated relation follows from the preceding two inequalities.
\qed

Lemma~\ref{lem-basic-iter} is important for both convergence and convergence rate analysis.
We now introduce additional assumptions and refine Lemma~\ref{lem-basic-iter}.
Specifically, we assume that the objective function is convex and has bounded level sets, and the subgradient norms 
$\|\tilde \nabla f(x)\|$ grow at most linearly with $\|x\|$.
\begin{assum}\label{asum:fconvex}
	The function $f(\cdot)$ is convex and has bounded level sets.
\end{assum}
Note that if $f(\cdot)$ is strongly convex with $\mu>0$, then Assumption~\ref{asum:fconvex} is satisfied. 
The assumption is also satisfied if $f(\cdot)$ is convex and coercive, i.e., $\lim_{\|x\|\to\infty}f(x)=+\infty$.	 
We make the following assumption regarding the subgradients of $f(\cdot)$. 
\begin{assum}\label{assum-infty}
	There exist scalars $M_1,M_2>0$
	such that $\|\tilde \nabla f(x)\|\le M_1\|x\|+M_2$
	for all subgradients $\tilde \nabla f(x)\in\partial f(x)$ and for $x\in\R^n$.
\end{assum}

The following lemma will be important in establishing the convergence properties of the method.
\begin{lem}\label{lem-key}
	Let Assumption~\ref{asum:fconvex} and Assumption~\ref{assum-infty} hold.
	Then, it surely holds
	for all $x^*\in X^*$  and $k\ge1$,
	\begin{align*}
		&\|x_{k+1}- x^*\|^2 \le(1+4s_k^2M_1^2)\|x_k - x^*\|^2 \cr
		&\ \ 
		+2s_k\left((1-\rho) B +\rho( M_1B+M_2)\right)\dist(x_k,X)\cr
		&\ \ 
		+2s_k\rho(f^*- f(\Pi_X[x_k])) 
		+2s_k\rho M_1\|x_k-x^*\|\,\dist(x_k,X)\cr
		& \ \ +\frac{s_k\g_k \d_k}{2\a_{\min}}- 2s_k\g_k\dist(x_k,X_{i_k})
		+4s_k^2(M_1^2 B^2 +M_2^2+\g_k^2),\qquad
	\end{align*}
	where $\rho\in[0,1]$ is arbitrary scalar and $B$ is a norm bound for the optimal solutions $x^*\in X^*$ and the subgradients $\tilde\nabla f(x^*)$ at any $x^*\in X^*$.
\end{lem}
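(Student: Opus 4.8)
The plan is to start from Lemma~\ref{lem-basic-iter} with the feasible point $y=x^*\in X^*$ and with $\mu=0$, which is the relevant choice since Assumption~\ref{asum:fconvex} only asserts convexity. Writing $f^*=f(x^*)$, this yields
\begin{align*}
\|x_{k+1}-x^*\|^2 &\le \|x_k-x^*\|^2 + 2s_k(f^*-f(x_k)) \\
&\quad + \frac{s_k\g_k\d_k}{2\a_{\min}} - 2s_k\g_k\dist(x_k,X_{i_k}) + s_k^2(\|\tilde\nabla f(x_k)\|+\g_k)^2 .
\end{align*}
The two penalty terms appear unchanged in the claim, so the whole argument reduces to (i) bounding $2s_k(f^*-f(x_k))$ and (ii) bounding the stepsize-squared term. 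The role of the free parameter $\rho\in[0,1]$ is to interpolate between two different upper bounds on $f^*-f(x_k)$, each an honest upper bound, so that $(1-\rho)\cdot(\text{first}) + \rho\cdot(\text{second})$ remains a valid bound.

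For step (i) I would derive two estimates. The first (weight $1-\rho$) uses a subgradient $\tilde\nabla f(x^*)$ satisfying the constrained optimality condition $\la\tilde\nabla f(x^*),z-x^*\ra\ge0$ for all $z\in X$. From the subgradient inequality, $f^*-f(x_k)\le\la\tilde\nabla f(x^*),x^*-x_k\ra$, and splitting $x^*-x_k=(x^*-\Pi_X[x_k])+(\Pi_X[x_k]-x_k)$, the optimality condition applied to $z=\Pi_X[x_k]\in X$ makes the first inner product nonpositive, while Cauchy--Schwarz bounds the second by $\|\tilde\nabla f(x^*)\|\,\dist(x_k,X)\le B\,\dist(x_k,X)$. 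This gives $f^*-f(x_k)\le B\,\dist(x_k,X)$. The second estimate (weight $\rho$) writes $f^*-f(x_k)=(f^*-f(\Pi_X[x_k]))+(f(\Pi_X[x_k])-f(x_k))$ and bounds the last difference using a subgradient at $\Pi_X[x_k]$: $f(\Pi_X[x_k])-f(x_k)\le\|\tilde\nabla f(\Pi_X[x_k])\|\,\dist(x_k,X)$. Applying Assumption~\ref{assum-infty}, the triangle inequality, and nonexpansiveness of the projection (so $\|\Pi_X[x_k]\|\le\|x_k-x^*\|+\|x^*\|\le\|x_k-x^*\|+B$) gives $\|\tilde\nabla f(\Pi_X[x_k])\|\le M_1\|x_k-x^*\|+M_1B+M_2$. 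Combining the two estimates with weights $1-\rho$ and $\rho$ and multiplying by $2s_k$ reproduces exactly the three middle terms of the claim.

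For step (ii) I would again invoke Assumption~\ref{assum-infty} together with $\|x_k\|\le\|x_k-x^*\|+B$ to get $\|\tilde\nabla f(x_k)\|+\g_k\le M_1\|x_k-x^*\|+M_1B+M_2+\g_k$, and then apply the four-term inequality $(a_1+a_2+a_3+a_4)^2\le4(a_1^2+a_2^2+a_3^2+a_4^2)$ to obtain
\[
s_k^2(\|\tilde\nabla f(x_k)\|+\g_k)^2\le 4s_k^2M_1^2\|x_k-x^*\|^2 + 4s_k^2(M_1^2B^2+M_2^2+\g_k^2).
\]
The first summand merges with the leading $\|x_k-x^*\|^2$ to produce the factor $(1+4s_k^2M_1^2)$, and the second is the final term of the claim.

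The main obstacle, and the only nonroutine point, is the first estimate in step (i): one must select the subgradient at $x^*$ coming from the constrained optimality condition so that the $x^*-\Pi_X[x_k]$ component has a nonpositive inner product and the remainder collapses to a clean $B\,\dist(x_k,X)$ term; this is what allows $B$ (a bound on optimal solutions and their subgradients, via Remark~\ref{remsgd-bound}) rather than a bound depending on the possibly-infeasible $x_k$ to control the term. Everything else is algebraic assembly of the carried-over penalty terms with the two bounds above.
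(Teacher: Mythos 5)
Your proposal is correct and follows essentially the same route as the paper's proof: start from Lemma~\ref{lem-basic-iter} with $y=x^*$, use the linear subgradient-growth assumption with the four-term squaring inequality to absorb $4s_k^2M_1^2\|x_k-x^*\|^2$ into the leading coefficient, and interpolate with weight $\rho$ between the two upper bounds on $f^*-f(x_k)$ (the one via the optimality condition at $x^*$ giving $B\,\dist(x_k,X)$, and the one via a subgradient at $\Pi_X[x_k]$ giving the $f^*-f(\Pi_X[x_k])$ and $M_1\|x_k-x^*\|$ terms). The only differences are cosmetic (order of the two steps, and your slightly more explicit care in selecting the subgradient at $x^*$ satisfying the variational inequality), so nothing further is needed.
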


{\it Proof}:
By Lemma~\ref{lem-basic-iter}, where we omit the term $-\mu s_k$ in the coefficient of
$\|x_k - y\|^2$, we surely have
for all $y\in X$ and $k\ge1$,
\begin{align}\label{eq-jedan}
	&\|x_{k+1}- y\|^2 \le \|x_k - y\|^2 
	+2s_k(f(y)-f(x_k))\cr
	&+\frac{s_k\g_k \d_k}{2\a_{\min}} - 2s_k\g_k\dist(x_k,X_{i_k})
	+s_k^2(\|\tilde\nabla f(x_k)\| + \g_k)^2.
\end{align}
Under our assumption on the subgradient-norm growth (Assumption~\ref{assum-infty})
we have that 	
\[\|\tilde\nabla f(x_k)\| + \g_k
\le M_1\|x_k\|+M_2 + \g_k
\le M_1(\|x_k-y\|+\|y\|)+M_2 + \g_k\]
where the last inequality is obtained by using $\|x_k\|\le \|x_k-y\|+\|y\|$.
Next, using $(a+b+c+d)^2\le 4 (a^2 + b^2+c^2+d^2)$, which is valid for any scalars $a,b,c,$ and $d$,
we have
\[(\|\tilde\nabla f(x_k)\| + \g_k)^2\le 4 M_1^2(\|x_k-y\|^2+\|y\|^2)+4M_2^2+4\g_k^2.\]
Hence, by substituting the preceding estimate back in relation~\eqref{eq-jedan},
we surely have for all $y\in X$ and all $k\ge1$,
\begin{align}\label{eq-rel-perk}
	&\|x_{k+1}- y\|^2 \le(1+4s_k^2 M_1^2)\|x_k - y\|^2 
	+2s_k(f(y)- f(x_k)) \cr
	& +\frac{s_k\g_k \d_k}{2\a_{\min}}- 2s_k\g_k\dist(x_k,X_{i_k})
	+4s_k^2(M_1^2 \|y\|^2 +M_2^2+\g_k^2).\qquad
\end{align}
By our assumption that the function $f(\cdot)$ has bounded level sets (Assumption~\ref{asum:fconvex}),
the problem $\min_{x\in X}f(x)$ has a nonempty compact convex solution set $X^*$, i.e.,  $\|x^*\|\le B$
for some $B>0$ and for all $x^*\in X^*$.
Let $x^*\in X^*$ be an arbitrary solution,
and let $y=x^*$ in~\eqref{eq-rel-perk}. Thus, we  surely have for all $x^*\in X^*$  and $k\ge1$,
\begin{align}\label{eq-rel-perk-last}
	&\|x_{k+1}- x^*\|^2 \le(1+4s_k^2M_1^2)\|x_k - x^*\|^2 
	+2s_k(f^*- f(x_k)) \cr
	& +\frac{s_k\g_k \d_k}{2\a_{\min}}- 2s_k\g_k\dist(x_k,X_{i_k})
	+4s_k^2(M_1^2 B^2 +M_2^2+\g_k^2),\qquad
\end{align}
where $f^*$ is the optimal value of the original problem.

The remaining part of the proof relies on using two different ways to upper bound the value $f^*- f(x_k)$ in~\eqref{eq-rel-perk-last} by using the convexity of $f(\cdot)$. One way is to 
write  for any $x^*\in X^*$,
\begin{eqnarray}\label{eq:lowerbound}
	f^*- f(x_k)
	&=& f(x^*)- f(x_k)\cr 
	&\le &\la\tilde\nabla f(x^*),x^*-x_k\ra \cr
	&=&\la\tilde\nabla f(x^*),x^*-\Pi_X[x_k]\ra +\la\tilde\nabla f(x^*),\Pi_X[x_k]-x_k\ra \cr
	&\le& \|\tilde\nabla f(x^*)\|\, \|\Pi_X[x_k]-x_k\|,
\end{eqnarray}
where the last inequality is obtained by using $\la\tilde\nabla f(x^*),x^*-\Pi_X[x_k]\ra\le0$,
which holds by the optimality of $x^*$ (since $\Pi_X[x_k]$ is feasible),
and by applying the Cauchy-Schwarz inequality to estimate the other inner product term.
Since $X^*$ is bounded by Assumption~\ref{asum:fconvex}, we may assume without loss of generality that $B$ is large enough so that
$\|\tilde\nabla f(x^*)\|\le B$ and $\|x^*\|\le B$ for all subgradients $\tilde\nabla f(x^*)$ and all $x^*\in X^*$, so that we surely have
\begin{equation}\label{eq-oneway}
	f^*- f(x_k)\le B \|\Pi_X[x_k]-x_k\|
\end{equation}	

Another way is to write
\begin{eqnarray*}
	f^*- f(x_k) &=&
	f^*-f(\Pi_X[x_k])+f(\Pi_X[x_k]) -f(x_k)\cr
	&\le &f^*-f(\Pi_X[x_k])+
	\la\tilde\nabla f(\Pi_X[x_k]),\Pi_X[x_k]-x_k\ra\cr
	&\le & f^*-f(\Pi_X[x_k])+
	\|\tilde\nabla f(\Pi_X[x_k])\|\,\|\Pi_X[x_k]-x_k\|.
\end{eqnarray*}
Using the assumption that the subgradient-norm growth is at most linear 
(Assumption~\ref{assum-infty}),
we have that 
\[\|\tilde\nabla f(\Pi_X[x_k])\|\le M_1\|\Pi_X[x_k]\| +M_2\le M_1(\|\Pi_X[x_k]-x^*\| +B)+M_2,\]
where we use the fact that $\|x^*\|\le B$
for any $x^*\in X^*$.
By combining the preceding two relations and using 
the fact that $\|\Pi_X[x_k]-x^*\|\le \|x_k-x^*\|$,
we obtain
\begin{equation}\label{eq-anotherway}
	f^*- f(x_k) \le  f^*-f(\Pi_X[x_k]) +
	(M_1\|x_k-x^*\| +M_1B+M_2 )\|\Pi_X[x_k]-x_k\|
\end{equation}
Multiplying the inequality~\eqref{eq-oneway} with $1-\rho$ and the inequality~\eqref{eq-anotherway} with $\rho$, for some $\rho\in[0,1]$, and combining these 
with inequality~\eqref{eq-rel-perk-last}, we obtain surely 
for all $x^*\in X^*$  and $k\ge1$,
\begin{align*}
	&\|x_{k+1}- x^*\|^2 \le(1+4s_k^2M_1^2)\|x_k - x^*\|^2 +2s_k(1-\rho)
	B \|\Pi_X[x_k]-x_k\|\cr
	&\ \ 
	+2s_k\rho(f^*- f(\Pi_X[x_k])) 
	+2s_k\rho\left(M_1\|x_k-x^*\| +M_1B+M_2\right)\,\|\Pi_X[x_k]-x_k\|\cr
	& \ \ +\frac{s_k\g_k \d_k}{2\a_{\min}}- 2s_k\g_k\dist(x_k,X_{i_k})
	+4s_k^2(M_1^2 B^2 +M_2^2+\g_k^2),\qquad
\end{align*}
which gives the stated relation by noting that 
and
$\|\Pi_X[x_k]-x_k\|=\dist(x_k,X)$, and by grouping the terms accordingly.
\qed

\subsection{Almost Sure and in-Expectation Convergence}
In this section, we establish almost sure convergence of the random incremental penalty method~\eqref{eq:gradmet}.
In the forthcoming discussion we will often use 
\as\, for
{\textit {almost surely}}.
Our convergence analysis makes use of
the following result on semi-supermartingale convergence,
which is due to Robbins and Siegmund~\cite{Robbins1971} (it can also be found in~\cite{polyak87}, Lemma 11, page 50).

\begin{lem}\cite{Robbins1971}\label{lem-polyak}
	Let $\{v_k\}$, $\{u_k\}$, $\{\a_k\}$, and $\{\b_k\}$ be random nonnegative scalar sequences such that 
	$\sum_{k=0}^\infty \a_k<\infty$ and $\sum_{k=0}^\infty \b_k<\infty$ \as,
	and
	\[\mathbb{E}\left[v_{k+1}\mid \mathcal{F}_k \right]\le(1+\a_k) v_k -u_k+\b_k\quad \hbox{for all }k\geq 0\ \as,\]
	where 
	$\mathcal{F}_k=\{v_\ell,u_\ell,\a_\ell,\b_\ell;\, 0\le \ell\le k\}$.
	Then, we have \as\ that $\sum_{k=0}^{\infty}u_k<\infty$ and $\lim_{k\to\infty}v_k=v$ for a random variable $v\geq 0$.
\end{lem}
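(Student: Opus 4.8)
The plan is to reduce the claim to the classical almost sure convergence theorem for nonnegative supermartingales (a nonnegative supermartingale converges a.s.\ to an integrable limit). There are two obstructions to applying that result directly: the factor $(1+\a_k)$ multiplying $v_k$ on the right-hand side is not a supermartingale drift, and the perturbation sequences satisfy $\sum_k\a_k<\infty$ and $\sum_k\b_k<\infty$ only \as, rather than being bounded by a deterministic constant. Accordingly I would first remove the multiplicative factor by a deterministic-in-the-path rescaling, then convert the inequality into a genuine supermartingale, and finally neutralize the ``almost surely finite'' issue by a localization (stopping-time) argument.

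First, using $\sum_k\a_k<\infty$ \as, set $c_k=\prod_{j=0}^{k-1}(1+\a_j)$ with $c_0=1$. Since $\a_j\ge0$ and $\sum_j\a_j<\infty$, the sequence $\{c_k\}$ is nondecreasing and converges \as\ to a finite limit $c_\infty\in[1,\infty)$. Each $c_{k+1}$ is $\mathcal{F}_k$-measurable, and because $(1+\a_k)/c_{k+1}=1/c_k$, dividing the hypothesis by $c_{k+1}$ gives
\[\mathbb{E}[\tilde v_{k+1}\mid\mathcal{F}_k]\le \tilde v_k-\tilde u_k+\tilde\b_k,\]
where $\tilde v_k=v_k/c_k$, $\tilde u_k=u_k/c_{k+1}$, and $\tilde\b_k=\b_k/c_{k+1}$ are nonnegative and $\mathcal{F}_k$-measurable. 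Since $1\le c_k\le c_\infty<\infty$, summability transfers both ways: $\sum_k\tilde\b_k\le\sum_k\b_k<\infty$ \as, while $\sum_k u_k\le c_\infty\sum_k\tilde u_k$.

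Next I would introduce the auxiliary process
\[z_k=\tilde v_k+\sum_{j=0}^{k-1}\tilde u_j-\sum_{j=0}^{k-1}\tilde\b_j,\]
and a one-line computation from the displayed inequality shows $\mathbb{E}[z_{k+1}\mid\mathcal{F}_k]\le z_k$, so $\{z_k\}$ is a supermartingale. It is not nonnegative, but $z_k\ge-\sum_{j=0}^{k-1}\tilde\b_j$; the lower bound is random, which is exactly where the main difficulty lies, so I localize. For each integer $N\ge1$ define the stopping time $\sigma_N=\inf\{k:\sum_{j=0}^{k}\tilde\b_j\ge N\}$; by construction $\sum_{j=0}^{(k\wedge\sigma_N)-1}\tilde\b_j<N$, so the stopped process obeys $z_{k\wedge\sigma_N}\ge-N$, and since stopping preserves the supermartingale property, $z_{k\wedge\sigma_N}+N$ is a nonnegative supermartingale and converges \as\ to a finite limit. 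Because $\sum_j\tilde\b_j<\infty$ \as, every sample point lies in some $\{\sigma_N=\infty\}$, on which $z_{k\wedge\sigma_N}=z_k$; hence on a set of probability one $z_k$ itself converges to a finite limit.

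Finally I would unwind the definitions. As $\sum_j\tilde\b_j$ converges \as, convergence of $z_k$ forces $\tilde v_k+\sum_{j=0}^{k-1}\tilde u_j$ to converge. The partial sums $\sum_{j<k}\tilde u_j$ are nondecreasing because $\tilde u_j\ge0$; were they to diverge, nonnegativity of $\tilde v_k$ would be contradicted, so $\sum_j\tilde u_j<\infty$ \as\ and consequently $\tilde v_k$ converges to a finite nonnegative limit. Passing back through $c_k\to c_\infty$ yields $\sum_k u_k\le c_\infty\sum_k\tilde u_k<\infty$ \as\ and $v_k=c_k\tilde v_k\to v:=c_\infty\lim_k\tilde v_k\ge0$, which is the assertion. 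The step I expect to be most delicate is the localization in the third paragraph: it is precisely what converts the merely-almost-surely-finite bound $\sum_k\b_k<\infty$ into a usable nonnegative supermartingale, and care is needed to verify that $\sigma_N$ is a genuine stopping time and that the stopped process remains bounded below by the deterministic constant $-N$.
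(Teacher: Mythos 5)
Your proof is correct. Note that the paper does not actually prove this lemma --- it is quoted as a known result of Robbins and Siegmund (with a pointer to Polyak's book) --- and your argument is essentially the classical proof from that source: rescale by the a.s.\ convergent product $c_k=\prod_{j<k}(1+\alpha_j)$ to kill the multiplicative drift, fold the summable perturbations and the $u$-terms into the almost-supermartingale $z_k$, and localize with the stopping times $\sigma_N$ to obtain a genuine nonnegative supermartingale; all the measurability checks you perform ($c_{k+1}$ is $\mathcal{F}_k$-measurable, $\sigma_N$ is a stopping time, the stopped process is bounded below by the deterministic constant $-N$) are the right ones and go through. The one point worth adding is integrability: Doob's convergence theorem for the nonnegative supermartingale $z_{k\wedge\sigma_N}+N$ requires $\mathbb{E}[z_0]=\mathbb{E}[v_0]<\infty$, which the lemma does not assume. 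This is repaired by one further, trivial localization: for $A_M=\{v_0\le M\}\in\mathcal{F}_0$ the process $\mathbf{1}_{A_M}\left(z_{k\wedge\sigma_N}+N\right)$ is an integrable nonnegative supermartingale, so convergence holds a.s.\ on each $A_M$, and letting $M\to\infty$ covers a set of full probability. With that sentence added, your argument is complete.
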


Using Lemma~\ref{lem-polyak}, we establish the following result which will be used in the convergence analysis of the method. 
\begin{lem}\label{lem-opt}
	Consider a minimization problem $\min_{x \in Z} \phi(z)$, where $\phi:\mathbb{R}^n\to\mathbb{R}$ is a 
	continuous function and $Z\subseteq\mathbb{R}^n$ is a closed convex set. Assume that 
	the solution set $Z^*$ of the problem is nonempty. Let 
	$\{z_k\}\subset \mathbb{R}^n$ be a random sequence, 
	$\{a_k\}$ and $\{c_k\}$ be random nonnegative scalar sequences, and $\{b_k\}$ and  $\{b'_k\}$ be deterministic nonnegative scalar sequences
	such that \as\ for all $z^*\in  Z^*$ and for all $k\ge1,$
	\[\mathbb{E}[\|z_{k+1}-z^*\|^2\mid\mathcal{F}_k] \le (1+a_k)\|z_k-z^*\|^2 
	- b_k\left (\phi(\Pi_Z[z_k]) -\phi^*\right) +c_k,\]
	\[\sum_{k=1}^\infty b'_k\dist(z_k,Z_k)<\infty\qquad\as\]
	where $\mathcal{F}_k=\{z_1,\ldots,z_k\}$ for all $k$,
	$\phi^*=\min_{x \in Z} \phi(z)$, while the scalar sequences satisfy $\sum_{k=1}^\infty a_k<\infty$ and
	$\sum_{k=1}^\infty c_k<\infty$ \as, and
	$\sum_{k=1}^\infty b_k=\infty$ and $\sum_{k=1}^\infty b'_k=\infty$. 
	Then, the sequence $\{z_k\}$ converges to some (random)  optimal solution $z^*\in X^*$ almost surely. Moreover, if the solution set $Z^*$ is bounded, then there exists a scalar $M_0$ such that $\|z_k\|\le M_0$ for all $k\ge0$ \as and convergence to $z^*$ is also in expectation. 
\end{lem}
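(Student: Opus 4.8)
The plan is to treat the first hypothesis as a Robbins--Siegmund recursion and the second as a feasibility control, and then to fuse the two into a single statement that locates a limit point of $\{z_k\}$ inside $Z^*$. First I would fix an arbitrary $z^*\in Z^*$ and apply the semi-supermartingale Lemma~\ref{lem-polyak} with $v_k=\|z_k-z^*\|^2$, multiplier $a_k$, additive term $c_k$, and $u_k=b_k(\phi(\Pi_Z[z_k])-\phi^*)$. Here $u_k\ge0$ because $\Pi_Z[z_k]\in Z$ and $\phi^*=\min_{z\in Z}\phi(z)$, while $\sum a_k<\infty$ and $\sum c_k<\infty$ \as, so the hypotheses of Lemma~\ref{lem-polyak} hold. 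The conclusion is that, \as, $\|z_k-z^*\|^2$ converges to a (random) limit $\theta(z^*)\ge0$ and $\sum_k b_k(\phi(\Pi_Z[z_k])-\phi^*)<\infty$. In particular $\{z_k\}$ is bounded \as, and since $\sum b_k=\infty$ this forces $\liminf_k(\phi(\Pi_Z[z_k])-\phi^*)=0$; analogously the second hypothesis with $\sum b'_k=\infty$ forces $\liminf_k\dist(z_k,Z)=0$.

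Next I would encode optimality and feasibility together through the continuous merit function $G(z)=\dist(z,Z)+\phi(\Pi_Z[z])-\phi^*$, which is nonnegative (projection onto a closed convex set is continuous and $\phi$ is continuous) and vanishes exactly on $Z^*$. The target is $\liminf_k G(z_k)=0$: granting this, boundedness lets me extract $z_{k_j}\to\bar z$ with $G(\bar z)=0$, i.e.\ $\bar z\in Z^*$. To pass from such a limit point to convergence of the whole sequence, I would first secure the convergence of $\|z_k-z^*\|$ simultaneously for all $z^*$ in a countable dense subset $D\subseteq Z^*$ (by intersecting the countably many \as\ events produced above). Approximating the random point $\bar z$ by elements of $D$ and combining $\|z_{k_j}-\bar z\|\to0$ along the subsequence with the already-established limits, a three-term estimate yields $\lim_k\|z_k-\bar z\|=0$, so $z_k\to\bar z\in Z^*$ \as

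The main obstacle is exactly the step $\liminf_k G(z_k)=0$: the two summable series $\sum_k b_k(\phi(\Pi_Z[z_k])-\phi^*)$ and $\sum_k b'_k\dist(z_k,Z)$ carry different weights, so the optimality gap and the feasibility gap may a priori vanish along different subsequences, and one cannot in general read off a common subsequence along which both are small. The way I would dispatch it is to set $\hat b_k=\min\{b_k,b'_k\}$ and note $\sum_k\hat b_k G(z_k)\le \sum_k b_k(\phi(\Pi_Z[z_k])-\phi^*)+\sum_k b'_k\dist(z_k,Z)<\infty$; provided $\sum_k\hat b_k=\infty$, this immediately gives $\liminf_k G(z_k)=0$. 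This reduction exposes the operative requirement, namely that the weight sequences be comparable so that $\sum_k\min\{b_k,b'_k\}=\infty$; this is precisely the situation in the intended application, where both $b_k$ and $b'_k$ are proportional to the stepsize $s_k$ (with $b'_k$ carrying the extra growing factor $\g_k$), so that $\hat b_k$ equals $b_k$ up to a constant for all large $k$ and $\sum_k\hat b_k=\infty$.

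Finally, for the in-expectation claim I would take expectations in the recursion, discard the nonnegative subtracted term, and use $\sum a_k,\sum c_k<\infty$ together with $\mathbb{E}\|z_1\|^2<\infty$ to obtain $\sup_k\mathbb{E}\|z_k-z^*\|^2<\infty$ for a fixed $z^*\in Z^*$. When $Z^*$ is bounded, this $L^2$-bound gives uniform integrability of $\{\|z_k-\bar z\|\}$, and the a.s.\ convergence established above then upgrades to convergence in expectation by the uniform-integrability criterion; the a.s.\ uniform bound $\|z_k\|\le M_0$ (with $M_0$ finite \as) is itself immediate from the a.s.\ convergence of $\{z_k\}$.
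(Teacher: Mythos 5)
Your proposal is correct and follows the same overall architecture as the paper's proof: Robbins--Siegmund (Lemma~\ref{lem-polyak}) applied with $v_k=\|z_k-z^*\|^2$ and $u_k=b_k\left(\phi(\Pi_Z[z_k])-\phi^*\right)$, extraction of a subsequence whose limit lies in $Z^*$, and then upgrading to convergence of the whole sequence via the already-established convergence of $\|z_k-z^*\|$. Where you genuinely diverge is the step that produces a \emph{single} subsequence along which both the optimality gap $\phi(\Pi_Z[z_k])-\phi^*$ and the feasibility gap $\dist(z_k,Z)$ vanish. The paper simply asserts that such a common subsequence exists, which does not follow from the two separate liminf statements (two nonnegative sequences can each have liminf zero along disjoint index sets). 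Your merit-function device $G(z)=\dist(z,Z)+\phi(\Pi_Z[z])-\phi^*$ combined with $\hat b_k=\min\{b_k,b'_k\}$ repairs this, but at the price of the extra hypothesis $\sum_k\min\{b_k,b'_k\}=\infty$, which is not implied by $\sum_k b_k=\infty$ and $\sum_k b'_k=\infty$ separately; as you note, it does hold in the only place the lemma is invoked (the proof of Proposition~\ref{prop-convergence} uses $b_k=2s_k$ and $b'_k=s_k$, so $\min\{b_k,b'_k\}=s_k$ and $\sum_k s_k=\infty$), so your strengthened version of the lemma suffices for the paper. Two further places where you are more careful than the printed argument: you handle the randomness of the limit point by intersecting the almost-sure convergence events over a countable dense subset of $Z^*$ before substituting the limit for $z^*$, and you derive convergence in expectation from the $L^2$-bound $\sup_k\mathbb{E}\|z_k-z^*\|^2<\infty$ and uniform integrability rather than from the paper's deterministic almost-sure bound $M_0$, which is the more defensible route when only $\mathbb{E}\|z_1\|^2<\infty$ is assumed. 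Altogether the proposal is sound and in fact closes a gap in the paper's own proof.
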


{\it Proof}:
We note that the conditions of Lemma~\ref{lem-polyak} are satisfied with $v_k=\|z_k-z^*\|^2$, for every $z^*\in Z^*$, and $u_k=b_k\left (\phi(\Pi_Z[z_k]) -\phi^*\right)$, and by this lemma
we obtain the following statements:
\begin{equation}\label{eq:xtconv}
	\hbox{$\{\|z_k - z^*\|^2\}$ converges \as\ for each $z^*\in Z^*$},
\end{equation}
\begin{equation}\label{eq:sumf}
	\sum_{k=1}^\infty b_k \left(\phi(\Pi_Z[z_k]) - \phi^* \right)<\infty \qquad \as.\end{equation}  
Since $\sum_{k=1}^\infty b_k=\infty$, it follows from~\eqref{eq:sumf} that \as,
$\liminf_{k\to\infty} \phi(\Pi_Z[z_k])=\phi^*$. 
The conditions $\sum_{k=1}^\infty b'_k\dist(z_k,Z_k)<\infty$ \as, and $\sum_{k=1}^\infty b'_k=\infty$
imply that 
$\liminf_{k\to\infty} \|z_k-\Pi_Z[z_k]\|=0.$

Let $\{z_{k_\ell}\}$ be a subsequence of $\{z_k\}$ such that \as,
\begin{equation}\label{eq-lim1}
	\lim_{\ell\to\infty} \phi(\Pi_Z[z_{k_\ell}])
	=\liminf_{k\to\infty} \phi(\Pi_Z[z_k])=\phi^*,\end{equation} 
\begin{equation}\label{eq-lim2}
	\lim_{\ell\to\infty} \|z_{k_\ell}-\Pi_Z[z_{k_\ell}]\| =\liminf_{k\to\infty} \|z_k-\Pi_Z[z_k]\|=0.
\end{equation} 
Now, relation~\eqref{eq:xtconv} implies that the sequence $\{z_k\}$ is bounded \as, so  
without loss of generality, we can assume that $\{z_{k_\ell}\}$ is converging \as\ to some 
random point $\tilde z$ (for otherwise, we
can in turn select \as\ convergent subsequence of  $\{z_{k_\ell}\}$). Since the projection mapping $z\mapsto \Pi_Z[z]$ is continuous, it follows that 
\[\lim_{\ell\to\infty}\Pi_Z[z_{k_\ell}]
=\Pi_Z[\tilde z]\qquad\as\]
The preceding relation and relation~\eqref{eq-lim2}
imply that 
\[0=\lim_{\ell\to\infty}\|z_{k_\ell}-\Pi_Z[z_{k_\ell}]\|=
\|\tilde z -\Pi_Z[\tilde z]\|\qquad\as\]
Therefore, we have that $\tilde z\in Z$ \as
Moreover, by the continuity of $\phi(\cdot)$,
\[\lim_{\ell\to\infty} \phi(z_{k_\ell})=\phi(\tilde z)\qquad\as,\]
which by relation~\eqref{eq-lim1} and the fact $\tilde z\in Z$ \as\ implies that $\tilde z\in Z^*$ \as\ By letting $z^*=\tilde z$ in~\eqref{eq:xtconv}
we obtain that $\{z_k\}$ converges to $\tilde z$ \as

When the set $Z^*$ is bounded, the convergence point $\tilde z$ of the iterates $\{z_k\}$ is bounded by a (deterministic) scalar almost surely, implying that $\{\|z_k\|\}$ is \as\  bounded by some deterministic scalar $M_0$. Finally, using  Lebesgue's dominated convergence theorem, we conclude that $\{z_k\}$ converges to $\tilde z$ also in expectation.

\qed

Having Lemma~\ref{lem-key}, 
Lemma~\ref{lem-polyak}, and Lemma~\ref{lem-opt} in place, we  show next the convergence of the method
under some conditions on the stepsize $s_k$ and the penalty parameters $\g_k,\d_k$, as given in the following assumption.
\begin{assum}\label{assum-parameters} 
	Let $\g_k>0$, $\d_k>0$, and $s_k>0$ for all $k$, and assume that
	$\lim_{k\to\infty}\g_k=\infty$, $\sum_{k=1}^\infty s_k=\infty$, 
	$\sum_{k=1}^\infty s_k\g_k \d_k<\infty$, $\sum_{k=1}^\infty s_k^2\g_k^2<\infty$. 
\end{assum}

The intuition behind the conditions in Assumption~\ref{assum-parameters} is as follows. The condition $\lim_{k\to\infty}\g_k=\infty$ \an{ensures that the penalty function pushes the iterations} into the feasible set as time runs, whereas $\sum_{k=1}^\infty s_k=\infty$ allows the algorithm to make sufficient progress toward an optimal solution of the original problem. The last two conditions, $\sum_{k=1}^\infty s_k\g_k \d_k<\infty$ and $\sum_{k=1}^\infty s_k^2\g_k^2<\infty$, keep the perturbations caused by the penalty function under control.

In the following proposition, we establish almost sure iterates' convergence
and their boundedness properties. For this purpose we define the sigma-algebra $\EuScript F_k$ relevant to the random method~\eqref{eq:gradmet} as follows:
\[\EuScript F_k=\{x_1,\ldots,x_k\}\qquad\hbox{for all }k\ge 1.\]

\begin{prop}\label{prop-convergence}
	Let Assumptions~\ref{asum:fconvex}--\ref{assum-parameters} hold. Then, the iterates of the random method~\eqref{eq:gradmet} converge to a (random)  optimal solution \as\ and in expectation. Moreover, there exists a scalar $M>0$ such that
	$\|x_k\|\le M$ for all $k\ge 1$ \as
\end{prop}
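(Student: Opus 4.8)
The plan is to reduce the statement to Lemma~\ref{lem-opt} by specializing the bound in Lemma~\ref{lem-key}, running the argument in two passes over the free parameter $\rho\in[0,1]$. In both passes I would take the conditional expectation of Lemma~\ref{lem-key} given $\EuScript F_k$ and exploit the uniform sampling of $i_k$ together with Hoffman's Lemma~\ref{lem-hoffman}: since $\ex{\dist(x_k,X_{i_k})\mid\EuScript F_k}=\frac1m\sum_{i=1}^m\dist(x_k,X_i)\ge\frac{1}{m\b}\dist(x_k,X)$, the pathwise term $-2s_k\g_k\dist(x_k,X_{i_k})$ contributes $-\frac{2s_k\g_k}{m\b}\dist(x_k,X)$ after conditioning, while all other terms on the right are $\EuScript F_k$-measurable. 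I would first record that Assumption~\ref{assum-parameters} forces $\sum_k s_k^2<\infty$ (from $\g_k\to\infty$ one has $\g_k\ge1$ eventually, hence $s_k^2\le s_k^2\g_k^2$), so that $\sum_k 4s_k^2M_1^2<\infty$, $\sum_k 4s_k^2(M_1^2B^2+M_2^2+\g_k^2)<\infty$, and $\sum_k\frac{s_k\g_k\d_k}{2\a_{\min}}<\infty$.

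\emph{First pass (boundedness).} Take $\rho=0$ in Lemma~\ref{lem-key}. The only positive multiple of $\dist(x_k,X)$ that survives is $2s_kB\dist(x_k,X)$, and since $\g_k\to\infty$ there is a \emph{deterministic} index $K$ with $\frac{\g_k}{m\b}\ge 2B$ for $k\ge K$; for such $k$ the feasibility terms combine into $-\frac{s_k\g_k}{m\b}\dist(x_k,X)\le 0$. This yields a conditional inequality of exactly the form required by the Robbins--Siegmund Lemma~\ref{lem-polyak} with $v_k=\|x_k-x^*\|^2$, $\a_k=4s_k^2M_1^2$, $u_k=\frac{s_k\g_k}{m\b}\dist(x_k,X)$, and the remaining summable quantities playing the role of the noise term. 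I conclude that, for every $x^*\in X^*$, $\{\|x_k-x^*\|^2\}$ converges almost surely (hence $\{x_k\}$ is bounded a.s.) and that $\sum_k s_k\g_k\dist(x_k,X)<\infty$ almost surely.

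\emph{Second pass (optimality).} Now take $\rho=1$ (any $\rho\in(0,1]$ works). The conditional bound then carries the favorable term $-2s_k\bigl(f(\Pi_X[x_k])-f^*\bigr)$ alongside the coupling term $2s_kM_1\|x_k-x^*\|\,\dist(x_k,X)$. Using the a.s.\ bound $\sup_k\|x_k-x^*\|=:\tilde M<\infty$ from the first pass, this coupling term is at most $2s_kM_1\tilde M\,\dist(x_k,X)$; combined with the other positive $\dist(x_k,X)$ multiple and with $-\frac{2s_k\g_k}{m\b}\dist(x_k,X)$, the net multiple of $\dist(x_k,X)$ is nonpositive for all $k$ beyond a (now random) threshold, because $\g_k\to\infty$. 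Folding the finitely many early terms into a still-summable residual (legitimate since $\{x_k\}$ is bounded), I obtain precisely the recursion hypothesized in Lemma~\ref{lem-opt} with $Z=X$, $\phi=f$, $\phi^*=f^*$, $z_k=x_k$, $a_k=4s_k^2M_1^2$, $b_k=2s_k$, and $b'_k=s_k\g_k$. The summability requirements hold by the preamble, $\sum_k b_k=\sum_k 2s_k=\infty$ and $\sum_k b'_k=\sum_k s_k\g_k=\infty$ (as $\g_k\ge1$ eventually), while $\sum_k b'_k\dist(x_k,X)<\infty$ a.s.\ is exactly the summability produced in the first pass. Lemma~\ref{lem-opt} then delivers almost sure convergence of $\{x_k\}$ to a random optimal solution, and boundedness of $X^*$ (Assumption~\ref{asum:fconvex}) furnishes the deterministic bound $M$ and upgrades the convergence to expectation.

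The step I expect to be the main obstacle is the coupling term $2s_k\rho M_1\|x_k-x^*\|\,\dist(x_k,X)$ in Lemma~\ref{lem-key}: its coefficient grows with the iterate norm, so naively bounding $\dist(x_k,X)\le\|x_k-x^*\|$ would produce the non-summable multiple $2s_k\rho M_1\|x_k-x^*\|^2$ and destroy the supermartingale structure. The two-pass device is what circumvents this, since the $\rho=0$ pass secures an a.s.\ bound on $\|x_k-x^*\|$ \emph{before} the optimality pass needs it, after which the diverging penalty $\g_k$ dominates the now-bounded coupling coefficient. A secondary point to treat carefully is that the threshold in the second pass is random; this is harmless for the almost sure conclusions of Lemma~\ref{lem-opt}, provided the finitely many pre-threshold terms are absorbed into the residual.
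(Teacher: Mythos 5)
Your proposal follows essentially the same route as the paper: take conditional expectations in Lemma~\ref{lem-key}, invoke Hoffman's Lemma~\ref{lem-hoffman}, run a first pass with $\rho=0$ through Robbins--Siegmund (Lemma~\ref{lem-polyak}) to extract a.s.\ boundedness and the summability $\sum_k s_k\g_k\dist(x_k,X)<\infty$, then a second pass with $\rho=1$ feeding into Lemma~\ref{lem-opt}. The one substantive difference is the treatment of the coupling term $2s_k\rho M_1\|x_k-x^*\|\,\dist(x_k,X)$, and there your write-up has a wrinkle: bounding it by $2s_kM_1\tilde M\,\dist(x_k,X)$ with $\tilde M=\sup_k\|x_k-x^*\|$, and then invoking a threshold $K(\omega)$ past which the net $\dist$-coefficient is nonpositive, injects quantities that are \emph{not} $\EuScript F_k$-measurable into the recursion, whereas Lemma~\ref{lem-polyak} (and hence Lemma~\ref{lem-opt}) is stated for sequences adapted to the filtration generated by the process up to time $k$. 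The paper sidesteps this with the elementary inequality $2\|x_k-x^*\|\le 1+\|x_k-x^*\|^2$, which splits the coupling term into $s_kM_1\dist(x_k,X)\|x_k-x^*\|^2$ (absorbed into the adapted coefficient $a_k$, summable a.s.\ by the first pass) and $s_kM_1\dist(x_k,X)$ (absorbed into $c_k$); all remaining positive multiples of $\dist(x_k,X)$ are likewise adapted and a.s.\ summable, so no random threshold is needed. Your argument is repairable without any new idea---simply keep the adapted quantity $2s_kM_1\|x_k-x^*\|\,\dist(x_k,X)$ itself inside $c_k$ and verify $\sum_k c_k<\infty$ a.s.\ from the first-pass boundedness and summability---but as written the appeal to $\tilde M$ and to a random cutoff is not licensed by the lemmas you cite.
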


{\it Proof}: We use Lemma~\ref{lem-key}, according to which we surely have for all $x^*\in X^*$, all $\rho\in[0,1]$, and all $k\ge 1$,
\begin{align*}
	&\|x_{k+1}- x^*\|^2 \le(1+4s_k^2M_1^2)\|x_k - x^*\|^2 \cr
	&\ \ 
	+2s_k\left((1-\rho) B +\rho( M_1B+M_2)\right)\dist(x_k,X)\cr
	&\ \ 
	+2s_k\rho(f^*- f(\Pi_X[x_k])) 
	+2s_k\rho M_1\|x_k-x^*\|\,\dist(x_k,X)\cr
	& \ \ +\frac{s_k\g_k \d_k}{2\a_{\min}}- 2s_k\g_k\dist(x_k,X_{i_k})
	+4s_k^2(M_1^2 B^2 +M_2^2+\g_k^2),\qquad
\end{align*}
We will take the conditional expectation in the preceding relation with respect to $\EuScript F_k=\{x_1,\ldots,x_k\}$. In doing so, we note that, given $x_k$, the index
$i_k$ is uniformly distributed over $\{1,\ldots,m\}$, for all $k\ge1$, implying that
\[\ex{\dist(x_k,X_{i_k})\mid \EuScript F_k} =\frac{1}{m}\sum_{i=1}^m\dist(x_k,X_i).\]
By using Hoffman's Lemma (Lemma~\ref{lem-hoffman}), we obtain
\[\ex{\dist(x_k,X_{i_k})\mid \EuScript F_k}\ge \frac{1}{\b m}\,\dist(x_k,X).\]
Therefore, it follows that \as\ for all $x^*\in X^*$, all $\rho\in[0,1]$, and all $k\ge 1$, 
\begin{align}\label{eq-keys}
	&
	\ex{\|x_{k+1}- x^*\|^2 \mid \EuScript F_k}\le(1+4s_k^2M_1^2)\|x_k - x^*\|^2 \cr
	&\ \ 
	+2s_k\left((1-\rho) B +\rho( M_1B+M_2)\right)\dist(x_k,X)\cr
	&\ \ 
	+2s_k\rho(f^*- f(\Pi_X[x_k])) 
	+2s_k\rho M_1\|x_k-x^*\|\,\dist(x_k,X)\cr
	& \ \ +\frac{s_k\g_k \d_k}{2\a_{\min}}- \frac{2s_k\g_k}{\b m}\,\dist(x_k,X)
	+4s_k^2(M_1^2 B^2 +M_2^2+\g_k^2).\qquad
\end{align}

The proof proceeds in two major steps both using relation~\eqref{eq-keys}: (Step 1) we show that 
$\sum_{k=1}^\infty s_k\dist(x_k,X)<\infty$ \as, and (Step 2) we show the almost sure convergence of the method by means of Lemma~\ref{lem-opt}.

\noindent {\it Step~1}:
In relation~\eqref{eq-keys}, we let $x^*\in X^*$ be arbitrary but fixed, and we set 
$\rho=0$. In the resulting relation, we group the common coefficients with 
$\dist(x_k,X)$ and obtain \as\ for all $k\ge 1$,
\begin{align*}
	&
	\ex{\|x_{k+1}- x^*\|^2 \mid \EuScript F_k}\le(1+4s_k^2M_1^2)\|x_k - x^*\|^2 
	+\frac{s_k\g_k \d_k}{2\a_{\min}}\cr
	&\ \ - 2s_k\left(\frac{\g_k}{\b m} - B\right)\,\dist(x_k,X)
	+4s_k^2(M_1^2 B^2 +M_2^2+\g_k^2).\qquad
\end{align*}
By Assumption~\ref{assum-parameters}, we have that $\lim_{k\to\infty}\g_k=\infty$, implying that 
for some sufficiently large $k_1\ge1$, we have $\frac{\g_k}{m\b} - B\ge B$. Therefore,
for all $k\ge k_1$ \as,
\begin{align*}
	&\ex{\|x_{k+1}- x^*\|^2 \mid \EuScript F_k}\le(1+4s_k^2M_1^2)\|x_k - x^*\|^2 
	+\frac{s_k\g_k \d_k}{2\a_{\min}}\cr	
	&\ \  - 2s_kB\dist(x_k,X)
	+4s_k^2(M_1^2B^2 +M_2^2+\g_k^2).
\end{align*} 
Under Assumption~\ref{assum-parameters},
the preceding relation
satisfies all the conditions of Lemma~\ref{lem-polyak}, starting at time $k_1$, 
with $v_k=\|x_k - x^*\|^2$,
$\a_k=4s_k^2M_1^2$, 
$u_k=2s_kB\dist(x_k,X)$, and 
$\b_k=\frac{s_k\g_k \d_k}{2\a_{\min}} +4s_k^2(M_1^2B^2 +M_2^2+\g_k^2)$.
Thus, by Lemma~\ref{lem-polyak}, 
\begin{equation}\label{eq-step1}
	\sum_{k=1}^\infty s_k\dist(x_k,X)<\infty,\qquad\as 
\end{equation}

\noindent	
{\it Step~2}:
We now use relation~\eqref{eq-keys} with $\rho=1$
and obtain  that \as\ for all $x^*\in X^*$ and all $k\ge 1$, 
\begin{align*}
	&\ex{\|x_{k+1}- x^*\|^2 \mid \EuScript F_k}\le(1+4s_k^2M_1^2)\|x_k - x^*\|^2 \cr
	&\ \ +2s_k(f^*- f(\Pi_X[x_k])) 
	+2s_k M_1\|x_k-x^*\|\,\dist(x_k,X)\cr
	& \ \ +\frac{s_k\g_k \d_k}{2\a_{\min}}- 2s_k\left(\frac{\g_k}{\b m}-(M_1B+M_2)\right)\,\dist(x_k,X)
	+4s_k^2(M_1^2 B^2 +M_2^2+\g_k^2).\qquad
\end{align*}
By Assumption~\ref{assum-parameters}, we have that $\lim_{k\to\infty}\g_k=\infty$. Thus, 
for some sufficiently large $k_2\ge1$, we have $\frac{\g_k}{m\b} - (M_1B+M_2)\ge 0$ and $M_1^2 B^2 +M_2^2\le \g_k$ for all $k\ge k_2$. Using this and 
noting that $2\|x_k-x^*\|\le 1+\|x_k-x^*\|^2$, we find that \as\ for all $x^*\in X^*$ and all $k\ge k_2$, 
\begin{align}\label{eq-keys1}
	\ex{\|x_{k+1}- x^*\|^2 \mid \EuScript F_k}&\le\left(1+4s_k^2M_1^2+s_k M_1\,\dist(x_k,X)\right)
	\|x_k - x^*\|^2 \cr
	&\ \ +2s_k(f^*- f(\Pi_X[x_k])) 
	+s_k M_1\,\dist(x_k,X) \cr
	&\ \ +\frac{s_k\g_k \d_k}{2\a_{\min}}
	+8s_k^2\g_k^2.\qquad
\end{align}
By Assumption~\ref{assum-parameters} we have
$\sum_{k=1}^\infty s_k=\infty$.
By Assumption~\ref{assum-parameters} 
and the fact that $\sum_{k=1}^\infty s_k\dist(x_k,X)<\infty$ \as\ (see ~\eqref{eq-step1}),
it follows that relation~\eqref{eq-keys1}
satisfies the conditions of Lemma~\ref{lem-opt}
with $z_k=x_k$, $f(\cdot)=\phi(\cdot)$, and
\[a_k=4s_k^2M_1^2+s_k M_1\,\dist(x_k,X),
\qquad
b_k=2s_k,\qquad b'_k=s_k,\]
\[c_k=s_k M_1\,\dist(x_k,X) +\frac{s_k\g_k \d_k}{2\a_{\min}}+8s_k^2\g_k^2.\]
Hence, the results follow from Lemma~\ref{lem-opt}.
\qed

\tat{Next we provide some sufficient conditions for the parameters to satisfy Assumption~\ref{assum-parameters} and formulate their properties which we will use in the forthcoming convergence rate analysis. 
\begin{lem}\label{lem:parameters}
Let us consider the following setting for the parameters: \[\mbox{$s_k = O\left(\frac{1}{k^c\ln^{(1+3g)/2}(k+1)}\right)$, $\gamma_k = O\left(\ln^g (k+1)\right)$, $\delta_k =  O\left(\frac{1}{k^d}\right)$ with $c,g,d>0$}.\] 
Then, if $c\in[1/2,1)$ and $d> 1/2$, Assumption~\ref{assum-parameters} holds. Moreover, given this choice of the parameters, 
$\sum_{k=1}^{t} s_k \ge \frac{(t+1)^{1-c}-1}{(1-c)\ln^{(1+3g)/2}(t+1)}$, 
$\sum_{k=1}^{t} s^2_k = O(1)$,  $\sum_{k=1}^{t} s_k\gamma_k\delta_k = O(1)$,  $\sum_{k=1}^{t} s^2_k\gamma^2_k = O(1)$ as $t\to\infty$. 
\end{lem}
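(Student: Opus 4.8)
The plan is to verify each claim in Lemma~\ref{lem:parameters} in two stages: first confirm the four conditions of Assumption~\ref{assum-parameters}, then establish the four asymptotic estimates on the partial sums. Both stages reduce to elementary real analysis using integral comparison for sums of the form $\sum k^{-p}\ln^q(k+1)$, so I would not expect any deep obstacle; the work is mainly bookkeeping of exponents.

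To check Assumption~\ref{assum-parameters}, I would proceed condition by condition. The requirement $\lim_{k\to\infty}\gamma_k=\infty$ is immediate since $\gamma_k=O(\ln^g(k+1))$ with $g>0$ diverges (I would read the $O(\cdot)$ here as an exact order, i.e.\ $\gamma_k$ is comparable to $\ln^g(k+1)$, so it indeed tends to infinity). For $\sum_k s_k=\infty$, I would use $s_k$ comparable to $k^{-c}\ln^{-(1+3g)/2}(k+1)$ and apply the integral test: with $c\in[1/2,1)$, the exponent satisfies $c<1$, and a polynomial factor $k^{-c}$ with $c<1$ dominates any logarithmic correction, so $\sum_k k^{-c}\ln^{-(1+3g)/2}(k+1)=\infty$ (for instance by comparison with $\int^t x^{-c}\ln^{-(1+3g)/2}(x+1)\,dx$, which grows like $t^{1-c}\ln^{-(1+3g)/2}(t)$). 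For $\sum_k s_k\gamma_k\delta_k<\infty$, I would multiply the orders to get a term comparable to $k^{-c-d}\ln^{g-(1+3g)/2}(k+1)=k^{-(c+d)}\ln^{-(1+g)/2}(k+1)$; since $c\ge 1/2$ and $d>1/2$ give $c+d>1$, the series converges. For $\sum_k s_k^2\gamma_k^2<\infty$, squaring gives a term comparable to $k^{-2c}\ln^{2g-(1+3g)/2}(k+1)=k^{-2c}\ln^{(g-1)/2}(k+1)$; since $c\ge 1/2$ forces $2c\ge 1$, I would split into the case $2c>1$ (polynomial decay, convergence is immediate) and the boundary case $2c=1$, where the exponent on the log is $(g-1)/2$ and convergence holds provided this is less than $-1$, i.e.\ $g<-1$ --- this fails, so I must look more carefully.

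The boundary case $c=1/2$ is the one place I expect friction, and it is worth isolating as the main obstacle. When $2c=1$, the summand $s_k^2\gamma_k^2$ behaves like $k^{-1}\ln^{(g-1)/2}(k+1)$, and $\sum_k k^{-1}\ln^{r}(k+1)$ converges only when $r<-1$, i.e.\ $(g-1)/2<-1$, i.e.\ $g<-1$, which is incompatible with $g>0$. This would make the claim false as I have read the exponents, so before concluding I would re-examine the definition of $s_k$: the exponent $(1+3g)/2$ on the logarithm in $s_k$ is presumably chosen precisely to make this borderline sum converge. Recomputing, $s_k^2\gamma_k^2$ is comparable to $k^{-2c}\ln^{-(1+3g)}(k+1)\cdot\ln^{2g}(k+1)=k^{-2c}\ln^{-(1+g)}(k+1)$, and at $c=1/2$ this is $k^{-1}\ln^{-(1+g)}(k+1)$ with $1+g>1$, so the sum converges by the integral test. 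This corrected arithmetic (the factor $\ln^{-(1+3g)/2}$ squares to $\ln^{-(1+3g)}$, not $\ln^{(g-1)/2}$ after multiplying by $\ln^{2g}$ one gets $\ln^{-(1+g)}$) resolves the difficulty, and I would present the verification with this exponent tracking done carefully, since it is exactly the reason the peculiar power $(1+3g)/2$ appears in the stepsize.

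For the quantitative estimates, the lower bound on $\sum_{k=1}^t s_k$ follows by monotonicity of $\ln$: factoring out the smallest logarithmic weight $\ln^{-(1+3g)/2}(t+1)$ over the range $k\le t$ and bounding the remaining $\sum_{k=1}^t k^{-c}$ from below by the integral $\int_1^{t+1}x^{-c}\,dx=\frac{(t+1)^{1-c}-1}{1-c}$ yields exactly the stated inequality. The three remaining estimates $\sum s_k^2=O(1)$, $\sum s_k\gamma_k\delta_k=O(1)$, and $\sum s_k^2\gamma_k^2=O(1)$ are restatements that these series converge (shown above), since a convergent series of nonnegative terms has bounded partial sums; here I note $\sum s_k^2$ converges because $s_k^2$ is comparable to $k^{-2c}\ln^{-(1+3g)}(k+1)$ with $2c\ge1$ and, at the boundary $2c=1$, the log exponent $-(1+3g)<-1$. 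I would assemble these four facts to complete the proof, with the only genuinely delicate point being the boundary exponent check at $c=1/2$ described above.
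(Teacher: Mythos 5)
Your proposal is correct and follows essentially the same route as the paper: integral comparison for each series, with the key exponent bookkeeping being that $s_k^2\gamma_k^2\sim k^{-2c}\ln^{-(1+g)}(k+1)$, which converges even at the boundary $c=1/2$ because $1+g>1$. Your initial slip in that exponent (forgetting to square the logarithmic factor in $s_k$) is caught and corrected within your own argument, and the final version matches the paper's computation; your direct treatment of $\sum s_k^2$ via the exponent $-(1+3g)<-1$ is if anything slightly cleaner than the paper's comparison $\sum s_k^2\le\sum s_k^2\gamma_k^2$.
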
}

\begin{proof}
 Obviously, given the settings, $\lim_{k\to\infty}\gamma_k = \lim_{k\to\infty} \ln^g (k+1) = \infty$. 
 Moreover, 
 \begin{align*}
 	\sum_{k=1}^{t} s_k = \sum_{k=1}^{t} \frac{1}{k^c\ln^{(1+3g)/2}(k+1)} &\ge \frac{1}{\ln^{(1+3g)/2}(t+1)}\int_{1}^{t+1}\frac{dx}{x^c}\cr& = \frac{(t+1)^{1-c}-1}{(1-c)\ln^{(1+3g)/2}(t+1)}.
 \end{align*}
 On the other hand, 
  \begin{align*}
 	\sum_{k=1}^{t} s_k = \sum_{k=1}^{t} \frac{1}{k^c\ln^{(1+3g)/2}(k+1)} &\le \frac{1}{\ln^{(1+3g)/2}(2)}\left(1+\int_{1}^{t}\frac{dx}{x^c}\right)\cr& = \frac{t^{1-c}-c}{(1-c)\ln^{(1+3g)/2}(2)}.
 \end{align*}
Thus, $\sum_{k=1}^{t} s_k = O\left(\frac{t^{1-c}}{\ln^{(1+3g)/2}(2)}\right)$, which implies also the condition $\sum_{k=1}^{\infty} s_k = \infty$. Next, 
 \begin{align*}
	\sum_{k=1}^{t} s^2_k\gamma^2_k = \sum_{k=1}^{t} \frac{1}{k^{2c}\ln^{1+g}(k+1)} &\le \frac{1}{\ln^{1+g}2}+\int_{1}^{t}\frac{dx}{x\ln^{1+g}(x+1)} \cr&\le \frac{1}{\ln^{1+g}2}+\frac{g}{\ln^g 2}.
\end{align*}
Hence, $\sum_{k=1}^{t} s^2_k\le\sum_{k=1}^{t} s^2_k\gamma^2_k = O(1)$ and the corresponding series converge.  
Finally, 
 \begin{align*}
	\sum_{k=1}^{t} s_k\gamma_k\delta_k = \sum_{k=1}^{t} \frac{1}{k^{c+d}\ln^{(1+g)/2}(k+1)} &\le \frac{1}{\ln^{(1+g)/2}(2)}\left(1+\int_{1}^{t}\frac{dx}{x^{c+d}}\right)\cr& = \frac{t^{1-c-d}-c-d}{(1-c-d)\ln^{(1+3g)/2}(2)}.
\end{align*}
Thus, taking into account that $c+d>1$, we conclude that $\sum_{k=1}^{t} s_k\gamma_k\delta_k = O(1)$ and $\sum_{k=1}^{\infty} s_k\gamma_k\delta_k<\infty$.
\end{proof}


\subsection{Convergence Rate Results}
In this section, 
we establish convergence rate results for the iterates of the method~\eqref{eq:gradmet}, under 
the following condition:
there is a scalar $M>0$ such that
\begin{align}\label{eq:gradBound}
	\|\tilde\grad f(x_k)\|\le M,\qquad 
	\|\tilde\grad f(\Pi_X[x_k])\|\le M
	\qquad\hbox{for all  $k\ge1$}\ \ \as	
\end{align}
This condition is satisfied under Assumptions~\ref{asum:fconvex}--\ref{assum-parameters}. Specifically, as seen in Proposition~\ref{prop-convergence}, the norms of iterates are \as\ bounded by a constant.
As a consequence, the subgradients $\tilde\grad f(x_k)$ 
as well as the subgradients at the projection points $\tilde\grad f(\Pi_X[x_k])$ are also \as\ bounded by a constant.

In what follows, we allow for the strong convexity constant to take value 0 ($\m=0$) in order to provide a unified treatment of both cases when $f(\cdot)$ is strongly convex and when it is just convex. 

Using relation~\eqref{eq:gradBound}, we prove the following relations which will be useful in the further analysis.
\begin{lem}\label{lem-basic-iter-fvalues}
	Let $f(\cdot)$ be strongly convex with $\mu\ge 0$ and let the condition in~\eqref{eq:gradBound} hold.
	Then, the following relations are \as\ valid for the iterates $x_k$ of the method~\eqref{eq:gradmet} 
	for all $k\ge1$,
	\begin{itemize}
		\item[(a)]
		$f(\Pi_X[x_k])-f(x_k)\le M \dist(x_k,X)-\frac{\mu}{2}\dist^2(x_k,X),$
		\item[(b)] 
		$f(\Pi_X[x_k])-f(x_k)\ge -M \dist(x_k,X)+\frac{\mu}{2}\dist^2(x_k,X).$
	\end{itemize}
	Here, $\mu\ge0$ is the strong convexity constant for $f(\cdot)$ and $M>0$ is subgradient norm bound along the iterates and their projections on $X$ from~\eqref{eq:gradBound}.
\end{lem}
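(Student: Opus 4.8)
The plan is to obtain both inequalities directly from the (strong) convexity inequality~\eqref{eq-strconvex}, applied twice with the two base points $x_k$ and $\Pi_X[x_k]$ interchanged, and then to control the resulting inner products by the Cauchy--Schwarz inequality together with the uniform subgradient bound~\eqref{eq:gradBound}. Throughout I abbreviate $\dist(x_k,X)=\|x_k-\Pi_X[x_k]\|$, so that $\|x_k-\Pi_X[x_k]\|^2=\dist^2(x_k,X)$.

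For the upper bound in part~(a), I would apply~\eqref{eq-strconvex} with $v=\Pi_X[x_k]$ and $u=x_k$, which reads
\[
f(\Pi_X[x_k])+\la\tilde\nabla f(\Pi_X[x_k]),x_k-\Pi_X[x_k]\ra+\frac{\mu}{2}\|x_k-\Pi_X[x_k]\|^2\le f(x_k).
\]
Rearranging gives $f(\Pi_X[x_k])-f(x_k)\le -\la\tilde\nabla f(\Pi_X[x_k]),x_k-\Pi_X[x_k]\ra-\frac{\mu}{2}\dist^2(x_k,X)$, and Cauchy--Schwarz together with the bound $\|\tilde\nabla f(\Pi_X[x_k])\|\le M$ from~\eqref{eq:gradBound} bounds the inner-product term from above by $M\dist(x_k,X)$, which yields~(a).

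For the lower bound in part~(b), I would instead apply~\eqref{eq-strconvex} with $v=x_k$ and $u=\Pi_X[x_k]$, giving
\[
f(\Pi_X[x_k])-f(x_k)\ge \la\tilde\nabla f(x_k),\Pi_X[x_k]-x_k\ra+\frac{\mu}{2}\dist^2(x_k,X).
\]
Here Cauchy--Schwarz and the bound $\|\tilde\nabla f(x_k)\|\le M$ from~\eqref{eq:gradBound} give $\la\tilde\nabla f(x_k),\Pi_X[x_k]-x_k\ra\ge -M\dist(x_k,X)$, which produces~(b).

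The only point requiring care is the pairing of the base point with the available subgradient bound: in~(a) the expansion must be taken at $\Pi_X[x_k]$ so that the subgradient appearing is $\tilde\nabla f(\Pi_X[x_k])$, whereas in~(b) it must be taken at $x_k$ so that only $\tilde\nabla f(x_k)$ enters; in both cases~\eqref{eq:gradBound} supplies exactly the bound that is needed. Since both estimates hold on the almost-sure event where~\eqref{eq:gradBound} is valid, the two relations hold \as\ as well. I do not anticipate any genuine obstacle here beyond this bookkeeping, as the argument is a routine two-point convexity estimate.
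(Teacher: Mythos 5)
Your proposal is correct and follows essentially the same argument as the paper: part (a) via the strong-convexity inequality based at $\Pi_X[x_k]$, part (b) via the same inequality with the roles of $x_k$ and $\Pi_X[x_k]$ exchanged, in each case finishing with Cauchy--Schwarz and the subgradient bound~\eqref{eq:gradBound}. Your explicit remark about pairing each base point with the corresponding subgradient bound is exactly the bookkeeping the paper's proof relies on implicitly.
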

{\it Proof}:
Let $k\ge 1$ be arbitrary. 
By using the convexity of $f(\cdot)$ (see~\eqref{eq-strconvex}),
we estimate $f(\Pi_X[x_k])-f(x_k)$ as follows:
\begin{align*}
	f(\Pi_X[x_k])-f(x_k)
	&\le \la \tilde \nabla f(\Pi_X[x_k]), \Pi_X[x_k]- x_k\ra-\frac{\mu}{2}\|\Pi_X[x_k]-x_k\|^2\cr
	&\le M \|\Pi_X[x_k]-  x_k\|-\frac{\mu}{2}\|\Pi_X[x_k]-x_k\|^2,
\end{align*}
where the last inequality follows by the Cauchy-Schwarz inequality and~\eqref{eq:gradBound}. 
By  using $\|\Pi_X[x_k]-x_k\|=\dist(x_k,X)$ we obtain the relation in part (a).
The relation in part (b) is obtained similarly by using~\eqref{eq-strconvex}, where 
the roles of $x_k$ and $\Pi_X[x_k]$ are exchanged. \qed

The next lemma provides an iterate relation that is crucial in our convergence rate analysis of the method.
In the lemma, we use a scalar  $\eta\in[0,1]$ in order to estimate $f(y)-f(x_k)$ by a convex combination
of $f(y)-f(x_k)$ and $f(y)-f(\Pi_X[x_k])$ plus some additional terms. 
We will use $\eta=0$ or $\eta=1$,
depending on which quantities we want to estimate.
The lemma is a refinement of Lemma~\ref{lem-key},
which possible due to the condition~\eqref{eq:gradBound} and Lemma~\ref{lem-basic-iter}.

\begin{lem}\label{lem-key-improved}
	Let $f(\cdot)$ be strongly convex with $\mu\ge 0$ and let $\g_k>0$, $\d_k>0$, and $s_k>0$ for all $k$. Also, let the condition in~\eqref{eq:gradBound} hold. Then, the iterate sequence $\{x_k\}$ of the method~\eqref{eq:gradmet} \as\ satisfies
	for all $y\in X$ and $k\ge1$,
	\begin{align*}
		\ex{\|x_{k+1}- y\|^2\mid \EuScript F_k} 
		&\le(1-\mu s_k)\|x_k - y\|^2 
		+2s_k(1-\eta)(f(y) - f(x_k))\cr
		&\quad+2s_k\eta (f(y) - f(\Pi_X[x_k]))\cr
		&\quad +\frac{s_k\g_k \d_k}{2\a_{\min}} 
		- 2s_k\left(\frac{\g_k}{m\b} -\eta M\right)\dist(x_k,X)\cr
		&\quad - \eta\mu s_k\dist^2(x_k,X)
		+2s_k^2(M^2+\g_k^2),
	\end{align*}
	where $\eta\in[0,1]$ is arbitrary,  
	$\beta>0$ is the Hoffman constant from Lemma~\ref{lem-hoffman}, and
	$M>0$ is subgradient norm bound from~\eqref{eq:gradBound}.
\end{lem}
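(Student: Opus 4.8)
The plan is to start from the surely-valid pathwise bound of Lemma~\ref{lem-basic-iter}, which already supplies the contraction factor $(1-\mu s_k)$ on $\|x_k-y\|^2$, the function-gap term $2s_k(f(y)-f(x_k))$, the penalty term $\frac{s_k\g_k\d_k}{2\a_{\min}}$, the random feasibility term $-2s_k\g_k\dist(x_k,X_{i_k})$, and the stochastic-gradient term $s_k^2(\|\tilde\nabla f(x_k)\|+\g_k)^2$. From here three manipulations remain: tame the gradient-squared term using the norm bound~\eqref{eq:gradBound}, redistribute the function gap $f(y)-f(x_k)$ through a convex split governed by $\eta$, and finally pass to the conditional expectation $\ex{\cdot\mid\EuScript F_k}$.

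For the gradient-squared term I would invoke~\eqref{eq:gradBound} to replace $\|\tilde\nabla f(x_k)\|$ by $M$ almost surely, and then use $(M+\g_k)^2\le 2(M^2+\g_k^2)$ to arrive at the claimed $2s_k^2(M^2+\g_k^2)$. For the function gap I would write $f(y)-f(x_k)=(1-\eta)(f(y)-f(x_k))+\eta(f(y)-f(x_k))$ and, in the $\eta$-weighted piece, insert $\pm f(\Pi_X[x_k])$ so that $\eta(f(y)-f(x_k))=\eta(f(y)-f(\Pi_X[x_k]))+\eta(f(\Pi_X[x_k])-f(x_k))$. The last summand is bounded from above using part~(a) of Lemma~\ref{lem-basic-iter-fvalues}, giving $f(\Pi_X[x_k])-f(x_k)\le M\dist(x_k,X)-\frac{\mu}{2}\dist^2(x_k,X)$; after multiplication by $2s_k\eta\ge 0$ this produces exactly the extra terms $2s_k\eta M\dist(x_k,X)$ and $-\eta\mu s_k\dist^2(x_k,X)$ that appear in the statement.

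The final step is to take $\ex{\cdot\mid\EuScript F_k}$ of the resulting pathwise inequality. Since $\EuScript F_k=\{x_1,\dots,x_k\}$, every term on the right-hand side is $\EuScript F_k$-measurable except the random feasibility term $-2s_k\g_k\dist(x_k,X_{i_k})$, so all other terms pass through the expectation unchanged. For the random term I would use the uniform choice of $i_k$ together with Hoffman's Lemma~\ref{lem-hoffman}, exactly as in the proof of Proposition~\ref{prop-convergence}, to obtain $\ex{\dist(x_k,X_{i_k})\mid\EuScript F_k}=\frac{1}{m}\sum_{i=1}^m\dist(x_k,X_i)\ge\frac{1}{m\b}\dist(x_k,X)$. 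Combining the resulting $-\frac{2s_k\g_k}{m\b}\dist(x_k,X)$ with the $+2s_k\eta M\dist(x_k,X)$ term yields the single coefficient $-2s_k\bigl(\frac{\g_k}{m\b}-\eta M\bigr)\dist(x_k,X)$, which matches the claim and completes the derivation.

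The only place that needs genuine care is the bookkeeping of signs. First, because the coefficient $2s_k\eta$ multiplying $f(\Pi_X[x_k])-f(x_k)$ is nonnegative, I must use the \emph{upper} bound of Lemma~\ref{lem-basic-iter-fvalues}, i.e.\ part~(a) rather than part~(b); using the wrong part would reverse the inequality and spoil the signs of the $\dist(x_k,X)$ and $\dist^2(x_k,X)$ terms. Second, the coefficient $-2s_k\g_k$ in front of $\dist(x_k,X_{i_k})$ is negative, so Hoffman's \emph{lower} bound on $\ex{\dist(x_k,X_{i_k})\mid\EuScript F_k}$ becomes the desired \emph{upper} bound on the whole expression. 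Beyond these sign checks, the argument is a direct refinement of Lemma~\ref{lem-basic-iter} afforded by~\eqref{eq:gradBound}, and presents no substantive obstacle.
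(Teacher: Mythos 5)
Your proposal is correct and follows essentially the same route as the paper's proof: starting from Lemma~\ref{lem-basic-iter}, bounding the gradient term via~\eqref{eq:gradBound} and $(M+\g_k)^2\le 2(M^2+\g_k^2)$, splitting the function gap with $\eta$ and invoking Lemma~\ref{lem-basic-iter-fvalues}(a), and handling the random feasibility term via the uniform sampling of $i_k$ and Hoffman's lemma. The only (immaterial) difference is that the paper takes the conditional expectation before performing the $\eta$-split, whereas you do it after; since all the split terms are $\EuScript F_k$-measurable, the two orderings are equivalent.
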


{\it Proof}:
By Lemma~\ref{lem-basic-iter}, we surely have for all
$y\in X$ and all $k\ge1$,
\begin{align*}
	&\|x_{k+1}- y\|^2 \le(1-\mu s_k)\|x_k - y\|^2 
	+2s_k(f(y)-f(x_k))\cr
	&+\frac{s_k\g_k \d_k}{2\a_{\min}} - 2s_k\g_k\dist(x_k,X_{i_k})
	+s_k^2(\|\tilde\nabla f(x_k)\| + \g_k)^2.
\end{align*}
By the condition in~\eqref{eq:gradBound}
the subgradients of $f(x_k)$ are bounded \as, so that $\|\tilde\nabla f(x_k)\| \le M$ \as\
Hence, we have \as\ for all $y\in X$ and all $k\ge1$,
\begin{align}\label{eq-rel-perk1}
	&\|x_{k+1}- y\|^2 \le(1-\mu s_k)\|x_k - y\|^2 
	+2s_k(f(y)- f(x_k)) \cr
	& +\frac{s_k\g_k \d_k}{2\a_{\min}}- 2s_k\g_k\dist(x_k,X_{i_k})
	+2s_k^2(M^2+\g_k^2).\qquad
\end{align}
Taking the conditional expectation with respect to $\EuScript F_k$ in relation~\eqref{eq-rel-perk1}, 
we obtain  that \as\, for all $y\in X$ and all $k\ge1$,
\begin{align*}
	&\ex{\|x_{k+1}- y\|^2\mid \EuScript F_k} \le
	(1-\mu s_k) \|x_k - y\|^2 +2s_k(f(y) - f(x_k))\cr
	& \ \ +\frac{s_k\g_k \d_k}{2\a_{\min}} - 2s_k\g_k \ex{\dist(x_k,X_{i_k})\mid \EuScript F_k} 
	+2s_k^2(M^2+\g_k^2).
\end{align*} 
Given $x_k$, the index $i_k$ is uniformly distributed over $\{1,\ldots,m\}$,  so it follows that
$\ex{\dist(x_k,X_{i_k})\mid \EuScript F_k} =\frac{1}{m}\sum_{i=1}^m\dist(x_k,X_i),$
and by using Hoffman's Lemma (Lemma~\ref{lem-hoffman}), we obtain
\[\ex{\dist(x_k,X_{i_k})\mid \EuScript F_k}\ge \frac{1}{\b m}\dist(x_k,X).\]
Therefore, we have \as\, for all $y\in X$ and all $k\ge1$,
\begin{align}\label{eq-prima}
	& \ex{\|x_{k+1}- y\|^2\mid \EuScript F_k} \le
	(1-\mu s_k) \|x_k - y\|^2 +2s_k(f(y) - f(x_k))\cr
	& \ \ +\frac{s_k\g_k \d_k}{2\a_{\min}}
	- \frac{2s_k\g_k}{m\b} \dist(x_k,X)+2s_k^2(M^2+\g_k^2).
\end{align}

Let $\eta\in[0,1]$ and lets write
\[f(y) - f(x_k)=(1-\eta)(f(y) - f(x_k))+\eta(f(y) - f(x_k)).\]
Further, we write
\[\eta(f(y) - f(x_k))=\eta(f(y) - f(\Pi_X[x_k] +f(\Pi_X[x_k]) - f(x_k)).\]
By Lemma~\ref{lem-basic-iter-fvalues}(a), we have
\[f(\Pi_X[x_k]) - f(x_k)\le M \dist(x_k,X)-\frac{\mu}{2}\dist^2(x_k,X).\]
Combining the preceding relations yields
\begin{align*}
	f(y) - f(x_k)&\le(1-\eta)(f(y) - f(x_k)) + \eta(f(y) - f(\Pi_X[x_k]) \cr
	&+ \eta M \dist(x_k,X)- \frac{\eta\mu}{2}\dist^2(x_k,X).
\end{align*}
By substituting the preceding estimate back in~\eqref{eq-prima}, we obtain the stated relation.
\qed

\subsubsection{Convergence rate in merely convex case}
\def\xtav{{x_t^{\textrm av}}}
\def\xttav{{x_{\tau,t}^{\textrm av}}}
\def\xtktav{{x_{k_1,t}^{\textrm av}}}
Here, we provide convergence rate result for the method in the case when $f(\cdot)$ is merely convex, i.e., 
$\mu=0$ in relation~\eqref{eq-strconvex}. 
To obtain a convergence rate estimate for the method, we will 
consider the weighted averages and the truncated weighted averages of the iterates. We define the weighted averages with respect to a generic sequence $\{\nu_k\}$  of positive scalars.
Given a positive sequence $\{\nu_k\}$, we define the $\nu$-weighted averages, as follows:
\begin{align}\label{eq-waver}
	\xtav &=S_t^{-1}\sum_{k=1}^t \nu_k x_k,\quad S_t =\sum_{k=1}^t \nu_k,\qquad\hbox{for all }t\ge 1.
\end{align}
Let us focus on $\xtav$ defined in~\eqref{eq-waver} as follows.
First, let us note that the truncated $\nu$-weighted average $\xttav$ of the iterates $x_k$ for $k=\tau,\ldots,t$, is defined by
\begin{align}\label{eq-truncwaver}
	x_{\tau, t}^{\textrm av}=(S_t-S_\tau)^{-1}\sum_{k=\tau}^t \nu_k x_k\qquad\hbox{for all }t\ge \tau\ge 1.
\end{align}
The $\nu$-weighted average $\xtav$ can be related to
the average $x_\tau^{\textrm av}$, with $\tau\le t$, and the truncated $\nu$-weighted average $\xttav$. Specifically, for any $t\ge \tau$, we have\vspace{-0.25cm}
\[\xtav=\frac{\sum_{k=1}^t \nu_k x_k}{\sum_{k=1}^t \nu_k}
=\frac{\sum_{k=1}^\tau \nu_k x_k +\sum_{k=\tau}^t \nu_k x_k}{\sum_{k=1}^t \nu_k}.\]
By using the definitions of $S_t$, $x_\tau^{\textrm av}$, and $\xttav$ i
n~\eqref{eq-waver}--\eqref{eq-truncwaver}, it follows that 
\be\label{eq-averages}
\xtav=\frac{S_\tau x_{\tau}^{\textrm av} 
	+ (S_t-S_\tau)\xttav}{S_t}\qquad\hbox{for all }t\ge\tau\ge 1.\ee
Thus, $\xtav$ is a convex combination of $x_{\tau}^{\textrm av}$ and
the truncated-weighted average $\xttav$. 

The following lemma provides some preliminary estimates regarding $\xtav$.
\begin{lem}\label{lem-prel-est}
	Let $f(\cdot)$ be convex, and 
	let the condition in~\eqref{eq:gradBound} hold. Assume that the problem $\min_{x\in X}f(x)$ has an optimal solution. Also,
	assume that $s_k>0$, $\g_k>0$, $\d_k>0$, and
	$\g_{k+1}\ge \g_k$ for all $k$, and $\lim_{k\to\infty}\g_k=\infty$.
	Let $\{x_k\}$ be generated by the method~\eqref{eq:gradmet}. Consider 
	the $s$-weighted averages $\{\xtav\}$ of the iterates obtained by using
	$\nu_k=s_k$ in~\eqref{eq-waver} for all $k\ge1$.
	Then, the following estimates are valid:
	for all $t\ge1$,
	\[\ex{f(\xtav)}-f^*\le \frac{\ex{\|x_1 - x^*\|^2}}{2S_t}
	+\frac{\sum_{k=1}^t\b_k}{2S_t}\ \hbox{for  all $x^*\in X^*$},\] 
	where $\b_k=\frac{s_k\g_k \d_k}{2\a_{\min}}+2s_k^2(M^2+\g_k^2)$
	for all $k$, with $M$ being the constant from condition~\eqref{eq:gradBound}.
	Moreover, there exists $k_1\ge 1$ such that for all $t\ge k_1$,
	\begin{align*}
		\ex{\dist(\xtav,X)}&\le 
		\frac{S_{k_1}}{S_t}\ex{\dist(x_{k_1}^{\textrm av},X)}
		+\frac{m\b\g_{k_1}^{-1}}{S_t}\ex{\dist^2(x_{k_1},X)}
		+\frac{m\b\sum_{k=k_1}^t \g_k^{-1}\b_k}{S_t}.\end{align*}
	
\end{lem}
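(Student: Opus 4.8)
The plan is to derive both estimates from Lemma~\ref{lem-key-improved} specialized to the merely convex case $\mu=0$, choosing the free parameter $\eta$ differently for each part. For the first estimate I would take $\eta=0$ and $y=x^*$, and simply discard the nonpositive term $-\frac{2s_k\g_k}{m\b}\dist(x_k,X)$; recalling that the remaining lower-order terms collect into $\b_k$, this yields
\[\ex{\|x_{k+1}-x^*\|^2\mid\EuScript F_k}\le\|x_k-x^*\|^2+2s_k(f^*-f(x_k))+\b_k.\]
Taking total expectations, isolating $2s_k(\ex{f(x_k)}-f^*)$, and telescoping from $k=1$ to $t$ (dropping the nonnegative $\ex{\|x_{t+1}-x^*\|^2}$) gives $2\sum_{k=1}^t s_k(\ex{f(x_k)}-f^*)\le\ex{\|x_1-x^*\|^2}+\sum_{k=1}^t\b_k$. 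Dividing by $2S_t$ and applying Jensen's inequality to the convex $f$ at $\xtav=S_t^{-1}\sum_{k=1}^t s_k x_k$ produces the stated bound on $\ex{f(\xtav)}-f^*$.

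For the second estimate, I would first exploit the convexity of $\dist(\cdot,X)$ together with the decomposition~\eqref{eq-averages} at $\tau=k_1$, which writes $\xtav$ as a convex combination of $x_{k_1}^{\textrm av}$ and the truncated average $\xtktav$. A single application of convexity to this combination, followed by Jensen's inequality on $\xtktav$, bounds
\[\ex{\dist(\xtav,X)}\le\frac{S_{k_1}}{S_t}\ex{\dist(x_{k_1}^{\textrm av},X)}+\frac{1}{S_t}\sum_{k=k_1}^t s_k\ex{\dist(x_k,X)}.\]
The first term already matches the claim, so the whole task reduces to bounding $\sum_{k=k_1}^t s_k\ex{\dist(x_k,X)}$ by the remaining two terms on the right-hand side.

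The engine for this is Lemma~\ref{lem-key-improved} applied with $\eta=1$ and the $\EuScript F_k$-measurable choice $y=\Pi_X[x_k]$, which annihilates both function-value differences and leaves $\dist^2(x_k,X)$ in the leading position; combined with $\dist^2(x_{k+1},X)\le\|x_{k+1}-\Pi_X[x_k]\|^2$ this gives
\[\ex{\dist^2(x_{k+1},X)\mid\EuScript F_k}\le\dist^2(x_k,X)+\b_k-2s_k\Big(\tfrac{\g_k}{m\b}-M\Big)\dist(x_k,X).\]
Since $\g_k\to\infty$, there is a $k_1$ beyond which $\frac{\g_k}{m\b}-M\ge\frac{\g_k}{2m\b}$, and multiplying through by $m\b/\g_k$ and taking total expectations yields $s_k\ex{\dist(x_k,X)}\le\frac{m\b}{\g_k}\big(\ex{\dist^2(x_k,X)}-\ex{\dist^2(x_{k+1},X)}+\b_k\big)$.

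The main obstacle is the final summation, because the telescoping block $\ex{\dist^2(x_k,X)}-\ex{\dist^2(x_{k+1},X)}$ carries the \emph{varying} weight $\g_k^{-1}$ and so does not collapse directly. I would resolve this with Abel summation (summation by parts): after regrouping, the interior increments are proportional to $\g_k^{-1}-\g_{k-1}^{-1}$ and the boundary contribution is $-\g_t^{-1}\ex{\dist^2(x_{t+1},X)}$. The monotonicity hypothesis $\g_{k+1}\ge\g_k$ makes each increment $\g_k^{-1}-\g_{k-1}^{-1}$ nonpositive and the boundary term nonpositive, so everything can be discarded except $\g_{k_1}^{-1}\ex{\dist^2(x_{k_1},X)}$. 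This leaves $\sum_{k=k_1}^t s_k\ex{\dist(x_k,X)}\le m\b\,\g_{k_1}^{-1}\ex{\dist^2(x_{k_1},X)}+m\b\sum_{k=k_1}^t\g_k^{-1}\b_k$, and dividing by $S_t$ and substituting into the displayed bound above gives exactly the stated inequality.
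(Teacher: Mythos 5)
Your proposal is correct and follows essentially the same route as the paper: specialize Lemma~\ref{lem-key-improved} with $\mu=0$, telescope and apply Jensen's inequality for the function-value bound, then combine the $y=\Pi_X[x_k]$ recursion with the convex-combination decomposition~\eqref{eq-averages} for the distance bound. Your two variations are purely cosmetic — taking $\eta=1$ reproduces exactly the inequality the paper gets from $\eta=0$ plus Lemma~\ref{lem-basic-iter-fvalues}(a), and your Abel summation yields the same bound as the paper's trick of dividing by $\g_k$ and using $\g_{k+1}^{-1}\le\g_k^{-1}$ to make the weighted squared distances telescope directly.
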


{\it Proof}:
In Lemma~\ref{lem-key-improved}, 
we let $\mu=0$ and $\eta=0$, and thus obtain
\as\, for all $y\in X$ and $k\ge1$,
\begin{align*}
	&\ex{\|x_{k+1}- y\|^2\mid \EuScript F_k} \le\|x_k - y\|^2 +2s_k(f(y) - f(x_k))\cr
	& +\frac{s_k\g_k \d_k}{2\a_{\min}}- \frac{2s_k\g_k}{m\b} \dist(x_k,X)+2s_k^2(M^2+\g_k^2).
\end{align*}
By taking the total expectation
in the preceding relation, we obtain for all $y\in X$ and $k\ge1$,
\begin{align}\label{eq-777}
	&\ex{\|x_{k+1}- y\|^2} \le \ex{\|x_k - y\|^2} +2s_k\ex{f(y) - f(x_k)}
	- \frac{2s_k\g_k}{m\b} \ex{\dist(x_k,X)}+\b_k,\qquad
\end{align}
where 
$\b_k=\frac{s_k\g_k \d_k}{2\a_{\min}}+2s_k^2(M^2+\g_k^2)$.
By summing these relations over $k=1,\ldots, t$ and 
by dropping the distance-related terms, after re-arranging the remaining terms, 
we obtain for all $y\in X$ and all $t\ge 1,$
\begin{align*}
	&2 \sum_{k=1}^t s_k\left(\ex{f(x_k)}- f(y)\right)
	+\ex{\|x_{t+1}- y\|^2} \le\ex{\|x_1 - y\|^2}+\sum_{k=1}^t\b_k.
\end{align*}
Next, we divide the preceding relation with $2 S_t$, where $S_t=\sum_{k=1}^t s_k$, and we use the definition of $\xtav$  (i.e.,~\eqref{eq-waver} with $\nu_k=s_k$).
This and the convexity of $f(\cdot)$ yield for $y=x^*$, where $x^*\in X^*$ is arbitrary, 
and for all $t\ge1$,
\[\ex{f(\xtav)}-f^*\le \frac{\ex{\|x_1 - x^*\|^2}}{2S_t}
+\frac{\sum_{k=1}^t\b_k}{2S_t}.\]

To show the estimate for $\ex{\dist(\xtav,X)}$, we use 
relation~\eqref{eq-777} with $y=\Pi_X[x_k]\in X$, and
obtain for all $k\ge1$,
\begin{align*}
	&\ex{\|x_{k+1}- \Pi_X[x_k]\|^2} \le\ex{\|x_k - \Pi_X[x_k]\|^2} \cr
	&+2s_k\ex{f(\Pi_X[x_k]) - f(x_k)}
	- \frac{2s_k\g_k}{m\b} \ex{\dist(x_k,X)}+\b_k.
\end{align*}
Since $\ex{\dist(x_{k+1},X)}\le \ex{\|x_{k+1}- \Pi_X[x_k]\|}$
and $\|x_k - \Pi_X[x_k]\|=\dist(x_k,X)$ it follows that 
for all $k\ge1$,
\begin{align*}
	\ex{\dist^2(x_{k+1},X)} \le&\ex{\dist^2(x_k,X)} +2s_k\ex{f(\Pi_X[x_k]) - f(x_k)}\cr
	&- \frac{2s_k\g_k}{m\b}\ex{\dist(x_k,X)}+\b_k.
\end{align*}
By Lemma~\ref{lem-basic-iter-fvalues}(a), where $\mu=0$,
we have that 
\[f(\Pi_X[x_k]) - f(x_k)\le M\dist(x_k,X),\]
implying that  for all $k\ge1$,
\begin{align*}
	\ex{\dist^2(x_{k+1},X)} \le\ex{\dist^2(x_k,X)} -2s_k\left(\frac{\g_k}{m\b}-M\right)\ex{\dist(x_k,X)}+\b_k.
\end{align*}

Since $\g_k\to+\infty$, there exists a large enough $k_1$ such that
$\g_k/(m\b)-M\ge \g_k/(2m\b)$, implying that for all $k\ge k_1$,
\begin{align*}
	\ex{\dist^2(x_{k+1},X)} \le\ex{\dist^2(x_k,X)} -\frac{s_k\g_k}{m\b}\ex{\dist(x_k,X)}+\b_k.
\end{align*}
Dividing the preceding relation with $\g_k$ and using the assumption that 
$\g_{k+1}\ge \g_k$, we obtain for all $k\ge k_1$,
\begin{align*}
	\g_{k+1}^{-1}\ex{\dist^2(x_{k+1},X)} &\le\g_k^{-1}\ex{\dist^2(x_k,X)} -\frac{s_k}{m\b}\ex{\dist(x_k,X)}+\g_k^{-1}\b_k.
\end{align*}
By summing these relations over $k=k_1,\ldots,t,$
after re-arranging the terms, we can see that for all $t\ge k_1$, we have
\begin{align*}
	\sum_{k=k_1}^t \frac{s_k}{m\b}\ex{\dist(x_k,X)} &\le\g_{k_1}^{-1}\ex{\dist^2(x_{k_1},X)}
	+\sum_{k=k_1}^t\frac{\b_k}{\g_k}.
\end{align*}
Since $X$ is convex, the distance function $\dist(\cdot,X)$ is convex, and 
by dividing the preceding relation with $\sum_{k=k_1}^t s_k$, we obtain 
for all $t\ge k_1$,
\begin{align}\label{eq-88}
	\frac{1}{m\b}\ex{\dist(\xtktav,X)} 
	\le\frac{\g_{k_1}^{-1}\ex{\dist^2(x_{k_1},X)}}{S_t-S_{k_1}}
	+\frac{\sum_{k=k_1}^t \g_k^{-1}\b_k}
	{S_t-S_{k_1}},
\end{align}
where we used the notation $S_t=\sum_{k=1}^t s_k$.

By expressing $\xtav$ as a convex combination of $x_{k_1}^{\textrm av}$ and $\xtktav$, i.e., by using~\eqref{eq-averages} with $\nu_k=s_k$ and $\tau=k_1$, 
\[\xtav=\frac{S_\tau x_{\tau}^{\textrm av} 
	+ (S_t-S_\tau)\xttav}{S_t}\qquad\hbox{for all }t\ge k_1\ge 1.\]
Then, by using the convexity of the distance function $\dist(\cdot,X)$,
we find that for all $t\ge k_1$,
\begin{align*}
	\ex{\dist(\xtav,X)}
	\le 
	\frac{S_{k_1}}{S_t}\ex{\dist(x_{k_1}^{\textrm av},X)} \ +\frac{S_t-S_{k_1}}{S_t}\ex{\dist(\xtktav,X)}.\end{align*}
The preceding relation and~\eqref{eq-88} yield for all $t\ge k_1$,
\begin{align*}
	\ex{\dist(\xtav,X)}\le& 
	\frac{S_{k_1}}{S_t}\ex{\dist(x_{k_1}^{\textrm av},X)}\cr &+\frac{m\b\g_{k_1}^{-1}}{S_t}\ex{\dist^2(x_{k_1},X)}
	+\frac{m\b\sum_{k=k_1}^t \g_k^{-1}\b_k}{S_t}.\end{align*}
\qed

%


The following proposition provides the convergence rate for the expected function values estimated at $\xtav$. 
\tat{\begin{prop}\label{prop-converg_rate}
	Let $f(\cdot)$ be convex, and 
	let the condition in~\eqref{eq:gradBound} hold. Assume that the problem $\min_{x\in X}f(x)$ has an optimal solution. Also, assume that \[\mbox{$s_k = O\left(\frac{1}{k^c\ln^{(1+3g)/2}(k+1)}\right)$, $\gamma_k = O\left(\ln^g (k+1)\right)$, $\delta_k =  O\left(\frac{1}{k^d}\right)$ }\]
	with $g>0$, $c\in[1/2,1)$ and $d> 1/2$.
	Let $\{x_k\}$ be generated by the method~\eqref{eq:gradmet}. Consider 
	the $s$-weighted averages $\{\xtav\}$ of the iterates obtained by using	
	$\nu_k=s_k$ in~\eqref{eq-waver} for all $k\ge1$.
	Then, the following estimate is valid, \an{as $t\to\infty$}, \vspace{-0.2cm}
	\[|\ex{f(\xtav)}-f^*| = O\left(\frac{\ln^{1/2+3g/2} t}{t^{1-c}}\right).\] 
\end{prop}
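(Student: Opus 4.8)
The plan is to bound the two one-sided gaps $\ex{f(\xtav)}-f^*$ and $f^*-\ex{f(\xtav)}$ separately and then take their maximum to control $|\ex{f(\xtav)}-f^*|$. The upper gap is immediate from the first estimate of Lemma~\ref{lem-prel-est}, namely $\ex{f(\xtav)}-f^*\le \frac{\ex{\|x_1-x^*\|^2}}{2S_t}+\frac{\sum_{k=1}^t\b_k}{2S_t}$ with $\b_k=\frac{s_k\g_k\d_k}{2\a_{\min}}+2s_k^2(M^2+\g_k^2)$. Since $\ex{\|x_1\|^2}<\infty$ and $X^*$ is bounded, the term $\ex{\|x_1-x^*\|^2}$ is $O(1)$; splitting $\sum_k\b_k$ into its three pieces, Lemma~\ref{lem:parameters} gives $\sum_k s_k\g_k\d_k=O(1)$, $\sum_k s_k^2=O(1)$, and $\sum_k s_k^2\g_k^2=O(1)$, so $\sum_{k=1}^t\b_k=O(1)$ and hence $\ex{f(\xtav)}-f^*=O(1/S_t)$.

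The lower gap is the more delicate half, because $\xtav$ need not be feasible and so $f(\xtav)$ can dip below $f^*$. I would control it through the distance to the feasible set: writing $f^*-f(\xtav)=\left(f^*-f(\Pi_X[\xtav])\right)+\left(f(\Pi_X[\xtav])-f(\xtav)\right)$, the first bracket is nonpositive since $\Pi_X[\xtav]\in X$, while the second is bounded by $M\,\dist(\xtav,X)$ via the subgradient inequality, Cauchy--Schwarz, and the norm bound~\eqref{eq:gradBound} (exactly as in Lemma~\ref{lem-basic-iter-fvalues}(a) with $\mu=0$, now applied at the point $\xtav$, which is bounded because the iterates are \as\ bounded by Proposition~\ref{prop-convergence}). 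Taking expectations yields $f^*-\ex{f(\xtav)}\le M\,\ex{\dist(\xtav,X)}$, and the second estimate of Lemma~\ref{lem-prel-est} reduces matters to bounding the three terms on its right-hand side.

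Finally, I would verify $\ex{\dist(\xtav,X)}=O(1/S_t)$. The first two terms of the distance estimate carry the fixed constants $S_{k_1}$, $\ex{\dist(x_{k_1}^{\textrm av},X)}$, $\g_{k_1}^{-1}$, and $\ex{\dist^2(x_{k_1},X)}$, all of which are $O(1)$, so these terms are $O(1/S_t)$; for the last term one must check $\sum_{k=k_1}^t\g_k^{-1}\b_k=O(1)$. Expanding $\g_k^{-1}\b_k=\frac{s_k\d_k}{2\a_{\min}}+2M^2 s_k^2\g_k^{-1}+2s_k^2\g_k$ and using $c+d>1$ for the first series, $\sum s_k^2<\infty$ for the second, and $2c\ge1$ together with the surviving logarithmic factor $\ln^{-(1+2g)}$ for the third, each series converges, so this term is again $O(1/S_t)$. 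Combining both gaps gives $|\ex{f(\xtav)}-f^*|=O(1/S_t)$, and the lower bound $S_t\ge\frac{(t+1)^{1-c}-1}{(1-c)\ln^{(1+3g)/2}(t+1)}$ from Lemma~\ref{lem:parameters} yields $1/S_t=O\!\left(\ln^{1/2+3g/2}t/t^{1-c}\right)$, i.e.\ the claimed rate since $(1+3g)/2=1/2+3g/2$. The main obstacle is the lower gap: one must realize that the infeasibility of $\xtav$ has to be converted into a function-value deficit through $\dist(\xtav,X)$, and then confirm that this distance decays at the same $O(1/S_t)$ order as the optimality gap rather than dominating it.
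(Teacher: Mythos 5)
Your proof is correct and follows essentially the same route as the paper: the upper gap comes from the first estimate of Lemma~\ref{lem-prel-est}, the lower gap is converted into a multiple of $\ex{\dist(\xtav,X)}$ and controlled by the second estimate, and Lemma~\ref{lem:parameters} handles all the sums. The only (immaterial) difference is that the paper bounds the lower gap via a subgradient at the fixed optimum $x^*$ (relation~\eqref{eq:lowerbound}), whose norm is a single fixed constant, which avoids your extra appeal to Proposition~\ref{prop-convergence} to bound the subgradient at $\Pi_X[\xtav]$ — a point worth noting since condition~\eqref{eq:gradBound} as stated only covers the iterates $x_k$ and their projections, not the averages.
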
}

{\it Proof}:
Let us notice that the setting for the parameters $s_k$, $\gamma_k$, and $\delta_k$ above is the same as one in Lemma~\ref{lem:parameters}.
Thus, by combining the result provided by that lemma with \an{the result for function values} 
from Lemma~\ref{lem-prel-est}, namely,
\[\ex{f(\xtav)}-f^*\le \frac{\ex{\|x_1 - x^*\|^2}}{2S_t}
+\frac{\sum_{k=1}^t\b_k}{2S_t}\quad \hbox{for  all $x^*\in X^*$},\] 
where $\b_k=\frac{s_k\g_k \d_k}{2\a_{\min}}+2s_k^2(M^2+\g_k^2)$, we conclude that there exists a scalar $C_1>0$ such that for sufficiently large $t$
\begin{align}\label{eq:upper}
	\ex{f(\xtav)}-f^*\le \frac{C_1 \ln^{1/2+3g/2} t}{t^{1-c}}.
\end{align}  
The constant $C_1$ depends on $M^2$ (where $M$ is from the condition~\eqref{eq:gradBound}), the smallest norm $\a_{\min}$ of the vectors $a_i,i\in[m]$, and the squared distance between the initial point and the corresponding solution $\ex{\|x_1 - x^*\|^2}$.

On the other hand, using the result on expected distances of Lemma~\ref{lem-prel-est} and the inequality $\ex{f(\xtav)}-f^*
\ge -\|\tilde \nabla f(x^*)\|\,\ex{\dist(\xtav,X)}$ (see~\eqref{eq:lowerbound}), we conclude that there exists finite $k_1$ such that for all $t>k_1$,
\begin{align*}&\ex{f(\xtav)}-f^*
	\ge -\|\tilde \nabla f(x^*)\|\,\ex{\dist(\xtav,X)} \cr
	& \ge -\|\tilde \nabla f(x^*)\|\left(\frac{S_{k_1}}{S_t}\ex{\dist(x_{k_1}^{\textrm av},X)}+\frac{m\b\g_{k_1}^{-1}}{S_t}\ex{\dist^2(x_{k_1},X)}
	+\frac{m\b\sum_{k=k_1}^t \g_k^{-1}\b_k}{S_t}\right),
\end{align*} 
where $\tilde \nabla f(x^*)$ is a subgradient of $f(\cdot)$ at some optimal point $x^*\in X^*$.
By using Lemma~\ref{lem:parameters} again, we conclude the existence of some constant $C_2>0$ such that
\begin{align}\label{eq:lower}
	\ex{f(\xtav)}-&f^*\ge -\frac{C_2\ln^{1/2+3g/2} t}{t^{1-c}}.
\end{align} 
The constant $C_2$ in its turn depends on the index $k_1$, the upper bound $M$ of the subgradient norms, the number of the constraints $m$, the value $\a_{\min}$, Hoffman's constant $\beta$, and distance between $x_{k_1}$ and the feasible set $X$.
Combining~\eqref{eq:upper} and~\eqref{eq:lower}, we obtain the stated result.
\qed

{\it By optimizing the parameters $c$ and $d$ in Proposition~\ref{prop-converg_rate},
	the convergence rate of the order $O\left(\frac{\ln^{1/2+\epsilon} t}{t^{1/2}}\right)$ for any small positive $\varepsilon$ is obtained for $c=1/2$ and $g = 2\epsilon/3$.}
The constant involved in $O$-notation in Proposition~\ref{prop-converg_rate} can be explicitly derived using Lemma~\ref{lem-prel-est}, and it \emph{linearly} depends on $M^2$ (with $M$ from~\eqref{eq:gradBound}), the Hoffman constant $\beta$, the number of the constraints $m$, the smallest inverse norm $\a^{-1}_{\min}$ of the vectors $a_i,i\in[m]$, and the expected squared distance from the initial point $x_1$ to the solution set $X^*$.

\def\bxtav{{\bar{x}_t^{\textrm av}}}
\def\bxttav{{\bar{x}_{\tau,t}^{\textrm av}}}
\def\bxtktav{{\bar{x}_{k_1,t}^{\textrm av}}}
\subsubsection{Convergence rate in strongly convex case}
Here, we analyze the convergence rate of the method~\eqref{eq:gradmet} for strongly convex
$f(\cdot)$ with $\mu>0$ (see~\eqref{eq-strconvex}). We note that in this case the original
problem of minimizing $f(x)$ over $x\in X$ has a unique solution (as $X$ is assumed to be nonempty).
To establish the convergence rate of the method, we will consider the $s^{-1}$-
weighted averages $\xtav$ defined by~\eqref{eq-waver} with $\nu_k=s_k^{-1}$.
To differentiate these averages from those used in the preceding section, we define\vspace{-0.25cm}
\begin{align}\label{eq-waver-new}
	\bxtav &=\bar {S}_t^{-1}\sum_{k=1}^t s_k^{-1} x_k\quad, \bar{S}_t=\sum_{k=1}^t s_k^{-1},\qquad\hbox{for all }t\ge 1.
\end{align}
\begin{rem}
	The $s^{-1}$-weighted averages of the form~\eqref{eq-waver-new} have been proposed in~\cite{ned2014} to analyze
	the convergence rate of a stochastic gradient method in terms of the expected function values.
	As seen in~\cite{ned2014}, using the $s^{-1}$-weighted averages for a stochastic gradient method applied to 
	minimizing merely convex function $f(\cdot)$ yields
	the convergence rate of $O(1/\sqrt{k})$, where $k$ is the number of iterations.
	However, in this case, an additional assumption that the iterates are bounded is needed, which is the main reason why we did not consider the $s^{-1}$-weighted averages in the preceding section.
\end{rem}
\begin{rem}
	The $s^{-1}$-weighted averages of the form~\eqref{eq-waver-new} have the convergence rate of $O(1/k)$
	for a stochastic subgradient method as applied to a strongly convex function $f(\cdot)$, as shown in~\cite{ned2014}. Unlike the setting in~\cite{ned2014}, here in addition to time-varying functions,  we have to deal with the infeasibility of the iterates. 
\end{rem}

Using the weighted average $\bxtav$ given in~\eqref{eq-waver-new}, we define 
the truncated weighted average $\bxttav$ for $k=\tau,\ldots,t$, as follows
\begin{align}\label{eq-truncwaver-new}
	\bxttav=(S_t-S_\tau)^{-1}\sum_{k=\tau}^t \nu_k x_k\qquad\hbox{for all }t\ge \tau\ge 1.
\end{align} with $\nu_k=s_k^{-1}$ for all $k$.
Note that the $s^{-1}$-weighted average $\bxtav$ and the truncated
weighted average $\bar{x}_\tau^{\textrm av}$, with $\tau\le t$, satisfy the relation
\be\label{eq-averages-new}
\bxtav=\frac{\bar{S}_\tau \bar{x}_{\tau}^{\textrm av} 
	+ (\bar{S}_t-\bar{S}_\tau)\bxttav}{\bar{S}_t}\qquad\hbox{for all }t\ge \tau\ge 1.\ee
A preliminary result regarding the function value at $\bxtav$  is given in the following lemma. 

\begin{lem}\label{lem-prel-est-strong}
	Let $f(\cdot)$ be strongly convex with $\mu>0$, and let the condition in~\eqref{eq:gradBound}) hold.
Assume that $s_k=\frac{2}{\mu k}$. 
Also, assume that $\g_k>0$, and 
	$\d_k>0$.
		Let $\{x_k\}$ be the iterate sequence  generated by method~\eqref{eq:gradmet}.
	Then, the following estimates are valid for the $s^{-1}$-weighted averages $\bxtav$:
	\an{for all $t\ge1$},
	\[\ex{f(\bxtav)}-f^*\le \frac{s_0^{-2}\ex{\|x_1 - x^*\|^2}}{2\bar{S}_t}
	+\frac{\sum_{k=1}^t c_k}{2\bar{S}_t},\]
	\an{where $s_0=2\mu^{-1}$, 
		$c_k=\frac{\g_k \d_ks_k^{-1}}{2\a_{\min}}+2(M^2+\g_k^2)$
		for all $k$, with $M$ being the constant from~\eqref{eq:gradBound}.}	
	Moreover, there exists $k_1\ge 1$ such that for all $t\ge k_1$,
	\begin{align*}
		\ex{\dist(\bxtav,X)}\le 
		\frac{\bar{S}_{k_1}}{\bar{S}_t}\ex{\dist(\bar{x}_{k_1}^{\textrm av},X)} 
		&+\frac{m\b\g_{k_1}^{-1}s_{k_1-1}^{-2}}{\bar{S}_t}\ex{\dist^2(x_{k_1},X)}\cr
		&+\frac{m\b\sum_{k=k_1}^t \g_k^{-1}c_k}{\bar{S}_t}.\end{align*}
\end{lem}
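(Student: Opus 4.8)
The plan is to prove the two estimates separately, both departing from Lemma~\ref{lem-key-improved} specialized to $\eta=0$, and in both cases exploiting the specific stepsize $s_k=\frac{2}{\mu k}$ through the elementary telescoping inequality
\[(1-\mu s_k)\,s_k^{-2}\le s_{k-1}^{-2},\]
which one checks directly since $s_k^{-2}(1-\mu s_k)=\frac{\mu^2}{4}k(k-2)\le\frac{\mu^2}{4}(k-1)^2=s_{k-1}^{-2}$. This is exactly what makes the $s^{-1}$-weighting compatible with the strong-convexity contraction factor $(1-\mu s_k)$. Throughout I use $\|\tilde\nabla f(x_k)\|\le M$ from~\eqref{eq:gradBound}, which is already incorporated into Lemma~\ref{lem-key-improved} and Lemma~\ref{lem-basic-iter-fvalues}, and for the second estimate the facts that $\g_k$ is nondecreasing and $\g_k\to\infty$.

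For the function-value bound, I would take $\eta=0$ and $y=x^*\in X^*$ in Lemma~\ref{lem-key-improved}, pass to the total expectation, and discard the nonpositive term $-\frac{2s_k\g_k}{m\b}\ex{\dist(x_k,X)}$. Rearranging to isolate $2s_k\ex{f(x_k)-f^*}$ and dividing by $2s_k^2$ gives
\[s_k^{-1}\ex{f(x_k)-f^*}\le \tfrac{(1-\mu s_k)}{2}\,s_k^{-2}\ex{\|x_k-x^*\|^2}-\tfrac{1}{2}s_k^{-2}\ex{\|x_{k+1}-x^*\|^2}+\tfrac{c_k}{2},\]
where the per-step constant is recognized to be exactly $\frac{c_k}{2}=\frac{\g_k\d_k s_k^{-1}}{4\a_{\min}}+(M^2+\g_k^2)$. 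Applying the telescoping inequality to the first term on the right and summing over $k=1,\dots,t$ collapses the $\ex{\|x_k-x^*\|^2}$ contributions, leaving the boundary term $\frac{1}{2}s_0^{-2}\ex{\|x_1-x^*\|^2}$ with the convention $s_0=2\mu^{-1}$ (for which the $k=1$ coefficient $\frac{1-\mu s_1}{2}s_1^{-2}$ is negative, hence bounded by $\frac{1}{2}s_0^{-2}$). Dividing by $\bar S_t$ and invoking Jensen's inequality for the convex $f$ against the $s^{-1}$-weighted average $\bxtav$ yields the claimed function-value estimate.

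For the distance bound, I would again use $\eta=0$ but now set $y=\Pi_X[x_k]\in X$, take the total expectation, and combine $\ex{\dist^2(x_{k+1},X)}\le\ex{\|x_{k+1}-\Pi_X[x_k]\|^2}$ with Lemma~\ref{lem-basic-iter-fvalues}(a) to replace $\ex{f(\Pi_X[x_k])-f(x_k)}$. Discarding the nonpositive $-s_k\mu\,\dist^2(x_k,X)$ term, this produces
\[\ex{\dist^2(x_{k+1},X)}\le(1-\mu s_k)\ex{\dist^2(x_k,X)}-2s_k\Big(\tfrac{\g_k}{m\b}-M\Big)\ex{\dist(x_k,X)}+s_k^2c_k,\]
after recognizing that $\frac{s_k\g_k\d_k}{2\a_{\min}}+2s_k^2(M^2+\g_k^2)=s_k^2c_k$. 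Since $\g_k\to\infty$, there is $k_1$ with $\frac{\g_k}{m\b}-M\ge\frac{\g_k}{2m\b}$ for $k\ge k_1$, so the middle term is at most $-\frac{s_k\g_k}{m\b}\ex{\dist(x_k,X)}$. Multiplying by $\g_k^{-1}s_k^{-2}$ then turns the distance coefficient into exactly $\frac{s_k^{-1}}{m\b}$ and the constant into $\g_k^{-1}c_k$; using the telescoping inequality on $s_k^{-2}$ together with $\g_k^{-1}\le\g_{k-1}^{-1}$ makes the quadratic-distance terms telescope with weights $\g_{k-1}^{-1}s_{k-1}^{-2}$. Handling the initial step $k=k_1$ on its own, where the bound $(1-\mu s_{k_1})s_{k_1}^{-2}\le s_{k_1-1}^{-2}$ alone already produces the leading coefficient $\g_{k_1}^{-1}s_{k_1-1}^{-2}$, and summing from $k_1$ to $t$ yields
\[\frac{1}{m\b}\sum_{k=k_1}^{t}s_k^{-1}\ex{\dist(x_k,X)}\le \g_{k_1}^{-1}s_{k_1-1}^{-2}\ex{\dist^2(x_{k_1},X)}+\sum_{k=k_1}^{t}\g_k^{-1}c_k.\]

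To finish, I would divide the last display by $\sum_{k=k_1}^{t}s_k^{-1}$ and use the convexity of $\dist(\cdot,X)$ to pass from the weighted sum to $\ex{\dist(\bxtktav,X)}$, then invoke the convex-combination identity~\eqref{eq-averages-new} with $\tau=k_1$ to write $\bxtav$ as a convex combination of $\bar{x}_{k_1}^{\textrm av}$ and $\bxtktav$; one more application of convexity of $\dist(\cdot,X)$ gives the stated bound. I expect the main obstacle to be the weight bookkeeping in the distance estimate: one must pick the scaling $\g_k^{-1}s_k^{-2}$ so that the infeasibility term collapses to the target $s^{-1}$-weighting while the constant matches $\g_k^{-1}c_k$, verify the coupled telescoping inequality $(1-\mu s_k)\g_k^{-1}s_k^{-2}\le\g_{k-1}^{-1}s_{k-1}^{-2}$, and treat the boundary index at $k_1$ carefully so that the leading coefficient comes out as $\g_{k_1}^{-1}s_{k_1-1}^{-2}$. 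Controlling the infeasibility of the iterates simultaneously with the function-value descent, rather than working on an already feasible sequence, is what distinguishes this from the classical strongly convex rate argument in~\cite{ned2014}.
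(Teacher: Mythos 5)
Your proposal is correct and follows essentially the same route as the paper's proof: Lemma~\ref{lem-key-improved} with $\eta=0$, the telescoping inequality $(1-\mu s_k)s_k^{-2}\le s_{k-1}^{-2}$ with the convention $s_0=2\mu^{-1}$, the choice $y=\Pi_X[x_k]$ combined with Lemma~\ref{lem-basic-iter-fvalues}(a) for the distance recursion, the threshold $k_1$ from $\g_k\to\infty$, the $\g_k^{-1}$-weighting with monotonicity of $\g_k$, and the convex-combination identity~\eqref{eq-averages-new}. Your bookkeeping of where the $\g$-monotonicity enters the telescoping differs trivially from the paper's, and you rightly flag that both the monotonicity and divergence of $\g_k$ are used even though they are not listed in the lemma's hypotheses (the paper's own proof relies on them too).
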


\begin{proof}
	In Lemma~\ref{lem-key-improved}, where $\mu>0$, we let $\eta=0$, and thus obtain
	\as\ for all $y\in X$ and $k\ge1$,
	\begin{align*}
		\ex{\|x_{k+1}- y\|^2\mid \EuScript F_k} 
		&\le(1-\mu s_k)\|x_k - y\|^2 
		+2s_k(f(y) - f(x_k))+\frac{s_k\g_k \d_k}{2\a_{\min}}\cr
		& \quad- \frac{2s_k\g_k}{m\b} \dist(x_k,X)+2s_k^2(M^2+\g_k^2).
	\end{align*}
	By taking the total expectation and dividing with $s_k^2$, we obtain for all $y\in X$ and $k\ge1$,
	\begin{align*}
		&s_k^{-2}\ex{\|x_{k+1}- y\|^2} 
		\le s_k^{-2}(1-\mu s_k)\ex{\|x_k - y\|^2} \cr 
		&+2s_k^{-1}\ex{f(y) - f(x_k)}
		- \frac{2s_k^{-1}\g_k}{m\b} \ex{\dist(x_k,X)}
		+c_k,\qquad
	\end{align*}
	where
	\[c_k=\frac{\g_k \d_ks_k^{-1}}{2\a_{\min}}+2(M^2+\g_k^2).\]
	Note that for the given stepsize $s_k=\frac{2}{\mu k}$,
	we have for sufficiently large $k$
	\[ s_k^{-2}(1-\mu s_k)=\frac{\mu^2k(k-2)}{4}
	\le \frac{\mu^2 (k-1)^{2}}{4}=s_{k-1}^2.\]
	Therefore, we obtain for all $y\in X$ and sufficiently large $k\ge1$,
	\begin{align}\label{eq-77ag}
		&s_k^{-2}\ex{\|x_{k+1}- y\|^2} 
		\le s_{k-1}^{-2}\ex{\|x_k - y\|^2} \cr 
		&+2s_k^{-1}\ex{f(y) - f(x_k)}
		- \frac{2s_k^{-1}\g_k}{m\b} \ex{\dist(x_k,X)}
		+c_k.\qquad\qquad
	\end{align}
	By summing the relations in~\eqref{eq-77ag} over $k=1,\ldots, t$ and omitting the terms with $\ex{\dist(x_k,X)}$,
	after re-arranging the terms, 
	we find that for all $y\in X$ and all $t\ge 1,$
	\begin{align*}
		2 \sum_{k=1}^t s_k^{-1}\left(\ex{f(x_k)}- f(y)\right)
		+s_t^{-2}\ex{\|x_{t+1}- y\|^2} 
		\le s_0^{-2}\ex{\|x_1 - y\|^2}+\sum_{k=1}^t c_k,
	\end{align*}
	where we define $s_0=2\mu^{-1}$.
	By dividing with $2\bar S_t$, where $\bar{S}_t=\sum_{k=1}^t s_k^{-1}$, using the definition of
	$\bxtav$ (see~\eqref{eq-waver-new}) and the convexity of $f(\cdot)$, 
	we obtain for $y=x^*$ and all $t\ge1$,
	\[\ex{f(\bxtav)}-f^*\le \frac{s_0^{-2}\ex{\|x_1 - x^*\|^2}}{2\bar{S}_t}
	+\frac{\sum_{k=1}^t c_k}{2\bar{S}_t}.\]
	
	Next, we establish the estimate for $\ex{\dist(\bxtav,X)}$. We use 
	relation~\eqref{eq-77ag} with $y=\Pi_X[x_k]\in X$. Noting that 
	$\ex{\dist(x_{k+1},X)}\le \ex{\|x_{k+1}- \Pi_x[x_k]\|^2}$
	and $\|x_k - \Pi_X[x_k]\|=\dist(x_k,X)$ for all $k$, we obtain for all $k\ge1$,
	\begin{align*}
		s_k^{-2}\ex{\dist^2(x_{k+1},X)} 
		&\le s_{k-1}^{-2}\ex{\dist^2(x_k,X)} +2s_k^{-1}\ex{f( \Pi_X[x_k]) - f(x_k)}\cr
		&\quad- \frac{2s_k^{-1}\g_k}{m\b} \ex{\dist(x_k,X)}
		+c_k.\qquad\qquad
	\end{align*}
	\an{By the convexity of $f(\cdot)$ and 
		the definition of the subgradient bound $M$ in~\eqref{eq:gradBound},}
	we have that 
	\[f(\Pi_X[x_k]) - f(x_k)\le M\dist(x_k,X)\]
	(see Lemma~\ref{lem-basic-iter-fvalues}(a) where the term $\dist^2(x_k,X)$ is omitted). Therefore, for all $k\ge1$,
	\begin{align*}
		s_k^{-2}\ex{\dist^2(x_{k+1},X)} 
		\le s_{k-1}^{-2}\ex{\dist^2(x_k,X)}  
		&- 2s_k^{-1} \left(\frac{\g_k}{m\b}-M\right)\ex{\dist(x_k,X)}\cr
		&+c_k.\qquad\qquad
	\end{align*}
	As $\g_k\to+\infty$, there is a large enough $k_1$ so that
	$\g_k/(m\b)-M\ge \g_k/(2m\b)$, implying that for all $k\ge k_1$,
	\begin{align*}
		s_k^{-2}\ex{\dist^2(x_{k+1},X)} 
		\le s_{k-1}^{-2}\ex{\dist^2(x_k,X)} -&\frac{s_k^{-1}\g_k}{m\b}\ex{\dist(x_k,X)}\cr
		&+c_k.
	\end{align*}
	Now, we divide the preceding relation with $\g_k$. By using the assumption that 
	$\g_{k+1}\ge \g_k$, we obtain for all $k\ge k_1$,
	\begin{align*}
		\g_{k+1}^{-1}s_k^{-2} \ex{\dist^2(x_{k+1},X)} \le\g_k^{-1}s_{k-1}^{-2}\ex{\dist^2(x_k,X)} 
		&-\frac{s_k^{-1}}{m\b}\ex{\dist(x_k,X)}\cr
		&+\g_k^{-1}c_k.
	\end{align*}
	By summing these relations over $k=k_1,\ldots,t,$
	after re-arranging the terms, we can see that for all $t\ge k_1$,
	\begin{align*}
		\sum_{k=k_1}^t \frac{s_k^{-1}}{m\b}\ex{\dist(x_k,X)} &\le\g_{k_1}^{-1}s_{k_1-1}^{-2}\ex{\dist^2(x_{k_1},X)}
		+\sum_{k=k_1}^t\frac{c_k}{\g_k}.
	\end{align*}
	The distance function $\dist(\cdot,X)$ is convex since $X$ is convex. Hence, 
	upon dividing the preceding relation with $\sum_{k=k_1}^t s_k^{-1}$, and by using the convexity of $\dist(\cdot,X)$
	and the definition of the truncated $s^{-1}$-weighted averages (see~\eqref{eq-waver-new}, and~\eqref{eq-truncwaver} with $\nu_k=s_k^{-1}$), we obtain 
	for all $t\ge k_1$,
	\begin{align}\label{eq-88ag}
		\frac{1}{m\b}\ex{\dist(\bxtktav,X)} 
		\le\frac{\g_{k_1}^{-1}s_{k_1-1}^{-2}\ex{\dist^2(x_{k_1},X)}}{\bar{S}_t-\bar{S}_{k_1}}
		+\frac{\sum_{k=k_1}^t \g_k^{-1}c_k}
		{\bar{S}_t-\bar{S}_{k_1}},
	\end{align}
	where we used the notation $\bar{S}_t=\sum_{k=1}^t s_k^{-1}$ (see~\eqref{eq-waver-new}).
	
	By expressing $\bxtav$ as a convex combination of $\bar{x}_{k_1}^{\textrm av}$ and $\bxtktav$, i.e., by using~\eqref{eq-averages-new} with $\nu_k=s_k^{-1}$ and $\tau=k_1$, we have
	\[\bxtav=\frac{\bar{S}_{k_1} \bar{x}_{{k_1}}^{\textrm av} 
		+ (\bar{S}_t-\bar{S}_{k_1})\bar{x}_{{k_1},t}^{\textrm av} }{\bar{S}_t}\qquad\hbox{for all }t\ge k_1\ge 1.\]
	Then, by using the convexity of the distance function $\dist(\cdot,X)$,
	we find that for all $t\ge k_1$,
	\begin{align*}
		\ex{\dist(\bxtav,X)}\le 
		\frac{\bar{S}_{k_1}}{\bar{S}_t}\ex{\dist(\bar{x}_{k_1}^{\textrm av},X)} 
		+\frac{\bar{S}_t - \bar{S}_{k_1}}{\bar{S}_t}\ex{\dist(\bxtktav,X)}.\end{align*}
	The preceding relation and~\eqref{eq-88ag} yield for all $t\ge k_1$,
	\begin{align*}
		\ex{\dist(\bxtav,X)}\le 
		\frac{\bar{S}_{k_1}}{\bar{S}_t}\ex{\dist(\bar{x}_{k_1}^{\textrm av},X)} 
		&+\frac{m\b\g_{k_1}^{-1}s_{k_1-1}^{-2}}{\bar{S}_t}\ex{\dist^2(x_{k_1},X)}\cr
		&+\frac{m\b\sum_{k=k_1}^t \g_k^{-1}c_k}{\bar{S}_t}.\end{align*}
\end{proof}

The next proposition provides the convergence rate of the procedure~\eqref{eq:gradmet} in terms of the $s^{-1}$-
weighted averages $\bxtav$ for a strongly convex function $f$.
\tat{\begin{prop}\label{prop-converg_rate-strong}
	Let $f(\cdot)$ be strongly convex with $\mu>0$, and 
	let Assumption~\ref{assum-infty} hold.
	Assume $s_k=\frac{2}{\mu k}$, $\d_k = \frac{1}{k^d}$ \an{with} $d>1$, and
	$\g_{k} = \ln^g k$ with $g>0$ for all $k$. 
	Let $\{x_k\}$ be the iterate sequence  generated by method~\eqref{eq:gradmet}.
	Then, the following estimate is valid for the $s^{-1}$-weighted averages $\bxtav$, $t\to\infty$, \vspace{-0.2cm}
	\[|\ex{f(\bxtav)}-f^*| = O\left(\frac{\ln^{2g} t}{t}\right). \]
\end{prop}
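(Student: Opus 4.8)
The plan is to mirror the structure of Proposition~\ref{prop-converg_rate}, replacing the $s$-weighted averages with the $s^{-1}$-weighted averages $\bxtav$ and using the two estimates furnished by Lemma~\ref{lem-prel-est-strong}. The strategy is to sandwich $\ex{f(\bxtav)}-f^*$ between an upper bound (obtained directly from the first estimate of Lemma~\ref{lem-prel-est-strong}) and a lower bound (obtained from the distance estimate of that lemma together with the subgradient inequality $\ex{f(\bxtav)}-f^*\ge -\|\tilde\nabla f(x^*)\|\,\ex{\dist(\bxtav,X)}$ from~\eqref{eq:lowerbound}), and then show that both bounds are $O(\ln^{2g}t/t)$.

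First I would record the asymptotics of the relevant sums under the prescribed parameters $s_k=\frac{2}{\mu k}$, $\g_k=\ln^g k$, $\d_k=k^{-d}$ with $d>1$. Since $s_k^{-1}=\frac{\mu k}{2}$, the normalizing sum behaves like $\bar S_t=\sum_{k=1}^t s_k^{-1}=\frac{\mu}{2}\sum_{k=1}^t k=\Theta(t^2)$. For the numerator in the function-value bound, $c_k=\frac{\g_k\d_k s_k^{-1}}{2\a_{\min}}+2(M^2+\g_k^2)$, the first piece is $O(k^{1-d}\ln^g k)$, which is summable because $d>1$, while the second piece gives $\sum_{k=1}^t 2(M^2+\ln^{2g}k)=O(t\ln^{2g}t)$. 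Hence $\sum_{k=1}^t c_k=O(t\ln^{2g}t)$, and dividing by $2\bar S_t=\Theta(t^2)$ yields the upper bound
\[
\ex{f(\bxtav)}-f^*\le\frac{s_0^{-2}\ex{\|x_1-x^*\|^2}}{2\bar S_t}+\frac{\sum_{k=1}^t c_k}{2\bar S_t}=O\!\left(\frac{\ln^{2g}t}{t}\right),
\]
since the first term is $O(t^{-2})$ and is therefore negligible.

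For the lower bound I would bound $\ex{\dist(\bxtav,X)}$ using the second estimate of Lemma~\ref{lem-prel-est-strong}. The three terms there are, respectively, $\frac{\bar S_{k_1}}{\bar S_t}\ex{\dist(\bar x_{k_1}^{\textrm av},X)}=O(t^{-2})$ (a fixed numerator over $\Theta(t^2)$), the term $\frac{m\b\g_{k_1}^{-1}s_{k_1-1}^{-2}}{\bar S_t}\ex{\dist^2(x_{k_1},X)}=O(t^{-2})$ (again a constant over $\bar S_t$), and the genuinely dominant term $\frac{m\b\sum_{k=k_1}^t\g_k^{-1}c_k}{\bar S_t}$. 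Here the weighting by $\g_k^{-1}=\ln^{-g}k$ tames the $c_k$ growth: $\g_k^{-1}c_k$ has a summable contribution from the $\d_k$-piece and a contribution $O(\ln^{-g}k(M^2+\ln^{2g}k))=O(\ln^g k)$ from the rest, so $\sum_{k=k_1}^t\g_k^{-1}c_k=O(t\ln^g t)$. Dividing by $\bar S_t=\Theta(t^2)$ gives $\ex{\dist(\bxtav,X)}=O(\ln^g t/t)$, whence $\ex{f(\bxtav)}-f^*\ge-\|\tilde\nabla f(x^*)\|\,O(\ln^g t/t)=O(\ln^g t/t)$, which is dominated by (hence subsumed in) $O(\ln^{2g}t/t)$. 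Combining the two bounds gives $|\ex{f(\bxtav)}-f^*|=O(\ln^{2g}t/t)$.

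The routine part is the bookkeeping of the power-of-logarithm sums, which is mechanical once the $\Theta(t^2)$ growth of $\bar S_t$ is in hand. \textbf{The main obstacle} I anticipate is the hidden hypothesis that Lemma~\ref{lem-prel-est-strong} requires $\g_{k+1}\ge\g_k$ and $\lim_k\g_k=\infty$; with $\g_k=\ln^g k$ monotonicity fails at the very smallest indices (where $\ln k$ can be tiny or the bound $\g_k/(m\b)-M\ge\g_k/(2m\b)$ is not yet in force), so I must argue that discarding finitely many initial terms—absorbed into the $O(t^{-2})$ constant-over-$\bar S_t$ contributions—leaves the asymptotics intact. A secondary subtlety is verifying that the stepsize identity $s_k^{-2}(1-\mu s_k)\le s_{k-1}^{-2}$ underlying~\eqref{eq-77ag} holds for all sufficiently large $k$ with this exact $s_k=2/(\mu k)$, but this is exactly the computation already carried out in Lemma~\ref{lem-prel-est-strong}, so I may invoke it directly.
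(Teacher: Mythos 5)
Your proposal follows essentially the same route as the paper: compute $\bar S_t=\Theta(t^2)$, bound $\sum_k c_k$ to get the upper estimate from the first part of Lemma~\ref{lem-prel-est-strong}, and pair it with the lower estimate $\ex{f(\bxtav)}-f^*\ge -\|\tilde\nabla f(x^*)\|\,\ex{\dist(\bxtav,X)}$ fed by the second part of that lemma. One small slip: the $\d_k$-piece of $c_k$, namely $O(k^{1-d}\ln^g k)$, is \emph{not} summable for $1<d\le 2$ (summability would require $d>2$); its partial sum grows like $t^{2-d}\ln^g t$, which is still $O(t\ln^{2g}t)$ for $d>1$, so your final bounds survive, but the paper handles this correctly via the explicit integral $\frac{t^{2-d}-1}{2-d}$ rather than by claiming convergence of the series.
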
}
\begin{proof}
	Given the choice $s_k =\frac{2}{\mu k}$, 
	\begin{align}\label{eq:barS_t}
		\bar{S}_t = \sum_{k=1}^{t}\frac{\mu k}{2} = \frac{\mu}{2} \frac{(1+t)t}{2}.
	\end{align}
	Next, taking into account that $\g_k = \ln^g k$ and $\d_k = \frac{1}{k^d}$, $d>1$, we obtain 
	\begin{align}\label{eq:c_t}
		\sum_{k=1}^t s^{-1}_k\g_k\d_k &=\sum_{k=1}^t \frac{\mu\ln^g k}{2k^{d-1}} \cr
		&\le \frac{\mu\ln^g t}{2} [\int_{1}^t\frac{dx}{x^{d-1}} + 1] =\begin{cases}
									  \frac{\mu\ln^g t}{2}\left(\frac{t^{2-d}-1}{2-d}+1\right), \, \mbox{if $d\ne 2$},\\
									  \frac{\mu\ln^g t}{2}\left(\ln t+1\right), \, \mbox{if $d= 2$}.
		                                                              \end{cases}
	\end{align} 
	By combining the inequalities in~\eqref{eq:barS_t} and~\eqref{eq:c_t} with \an{the estimate for the function values 
		from} Lemma~\ref{lem-prel-est-strong}, namely,
	\[\ex{f(\bxtav)}-f^*\le \frac{s_0^{-2}\ex{\|x_1 - x^*\|^2}}{2\bar{S}_t}
	+\frac{\sum_{k=1}^t c_k}{2\bar{S}_t},\]
	where $s_0=2\mu^{-1}$, 
	$c_k=\frac{\g_k \d_ks_k^{-1}}{2\a_{\min}}+2(M^2+\g_k^2)$, we conclude that there exists the constant $D_1>0$ such that
	\begin{align}\label{eq:upper1}
		\ex{f(\bxtav)}-f^*\le \frac{D_1 \ln^{2g} t}{t}.
	\end{align}  
	Note that $D_1$ depends on \an{the squared upper bound $M^2$ of the subgradients defined in relation~\eqref{eq:gradBound}, the value $\a_{\min}$, and the expected} squared distance between the initial point and the corresponding solution $\ex{\|x_1 - x^*\|^2}$.
	
	\an{To determine a lower bound for $\ex{f(\bxtav)}-f^*$, we use the estimate for the expected distance from Lemma~\ref{lem-prel-est-strong} and $\ex{f(\bxtav)}-f^*
\ge -\|\tilde \nabla f(x^*)\|\,\ex{\dist(\bxtav,X)}$ (see~\eqref{eq:lowerbound}), taking into account that the solution set $X^*$ is singleton, i.e., $X^*=\{x^*\}$. Without loss of generality, we may assume that the subgradient $\tilde\nabla f(x^*)$  satisfies 
		$\|\tilde\nabla f(x^*)\|\le M$, where  $M$ is defined in~\eqref{eq:gradBound}.} In this way, we can conclude that there exists finite $k_1$ such that for all $t\ge k_1$,
	\begin{align*}\ex{f(\bxtav)}-f^*
		&\ge -M\, \an{\ex{\dist(\bxtav,X)}}\cr
		&\ge -M\Big(\frac{\bar{S}_{k_1}}{\bar{S}_t}\ex{\dist(\bar{x}_{k_1}^{\textrm av},X)} 
		+\frac{m\b\g_{k_1}^{-1}s_{k_1-1}^{-2}}{\bar{S}_t}\ex{\dist^2(x_{k_1},X)}\cr
		&\quad+\frac{m\b\sum_{k=k_1}^t \g_k^{-1}c_k}{\bar{S}_t}\Big).
	\end{align*} 
	By using the estimation analogous to one in~\eqref{eq:barS_t} and~\eqref{eq:c_t}, we conclude existence of some constant $D_2>0$ such that
	\begin{align}\label{eq:lower1}
		\ex{f(\bxtav)}-f^*\ge -\frac{D_2\ln^g t}{t}.
	\end{align} 
	The constant $D_2$ in its turn depends on the index $k_1$, \an{the upper bound $M$ of the subgradient norms}, number of the constraints $m$, $\a_{\min}$, the Hoffman constant $\beta$, and distance between $x_{k_1}$ and the feasible set $X$.
	Hence, combining~\eqref{eq:upper1} and~\eqref{eq:lower1}, we conclude the result.
\end{proof}
The constant in the $O$-notation can be obtained using Lemma~\ref{lem-prel-est-strong}. In particular, it can be seen that, it depends \emph{linearly} on the  strong-convexity constant $\mu$ and its inverse counterpart $\mu^{-1}$, the squared upper bound of the gradients over a region containing the iterates and their projections on the constraint set $X$ (see~\eqref{eq:gradBound}),  the 
	Hoffman constant $\beta$, the number of the constraints $m$, 
	the inverse of the smallest norm $\a^{-1}_{\min}$ of the vectors $a_i,i\in[m]$, and the expected distance $\ex{\|x_1-x^*\|^2}$ between the initial point $x_1$ and the optimal solution $x^*$. Given the choice of the parameters, we conclude that the rate achieved by the proposed procedure in the case of strongly convex $f$ is $O\left(\frac{\ln^{\e} t}{t}\right)$ for any small positive $\e$.
\begin{rem}
	\an{As seen in Propositions~\ref{prop-converg_rate} and~\ref{prop-converg_rate-strong}, we obtain the convergence rate that is within a logarithmic factor matching the best convergence rates of $O(1/\sqrt{t})$ and $O(1/t)$ known for a stochastic subgradient method applied to a convex and strongly convex function respectively. The extra logarithmic factor is incurred due to the fact that the method \eqref{eq:gradmet} uses stochastic subgradients of time-varying functions $F_k(\cdot)$ yielding the sequence of infeasible iterates.
		As seen from 	the proofs of these results, the extra 
		logarithmic factor comes from the penalty parameters,
		which control the infeasibility of the iterates. However, it is worth noting that the factors $\ln^{1/2+\e} t$ and $\ln^{\e} t$ for any small positive $\e$ in the merely convex and strongly convex cases respectively improve the previously obtained results on convergence rates of incremental procedures with many constraints \cite{Fercoq2019AlmostSC}.  
	}
\end{rem}

\section{Simulation Results}

\begin{figure}[t]
	\centering
	\begin{overpic}[width=1\textwidth]{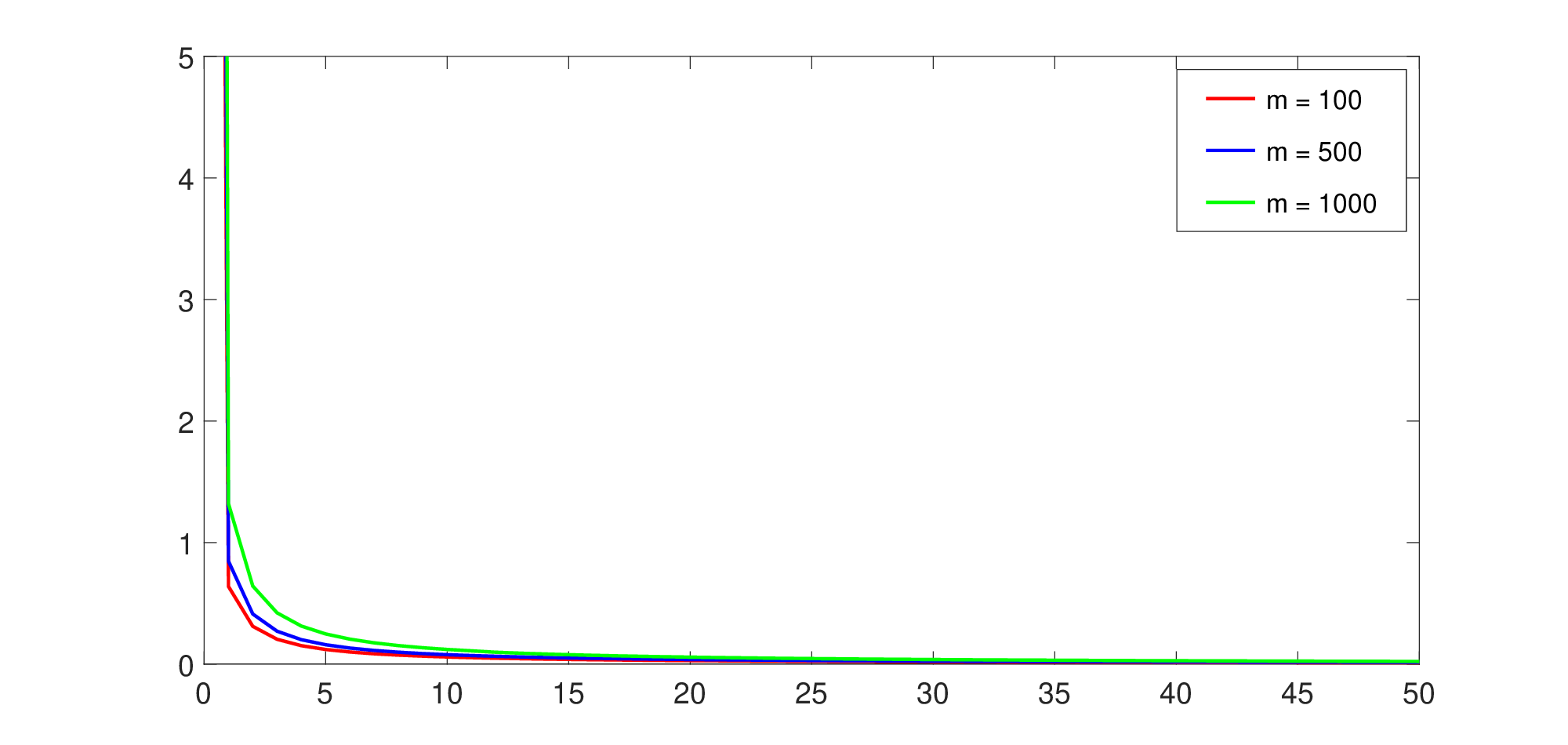}
		\put(6,18){\rotatebox{90}{{Relative Error}}}
		\put(40,-1){{time, $k$}}
	\end{overpic}
	\caption{Strongly convex function $f$ with the optimum inside the feasible set $X$.}
	\label{fig:strInside}
\end{figure}

\begin{figure}[t]
	\centering
	\begin{overpic}[width=1\textwidth]{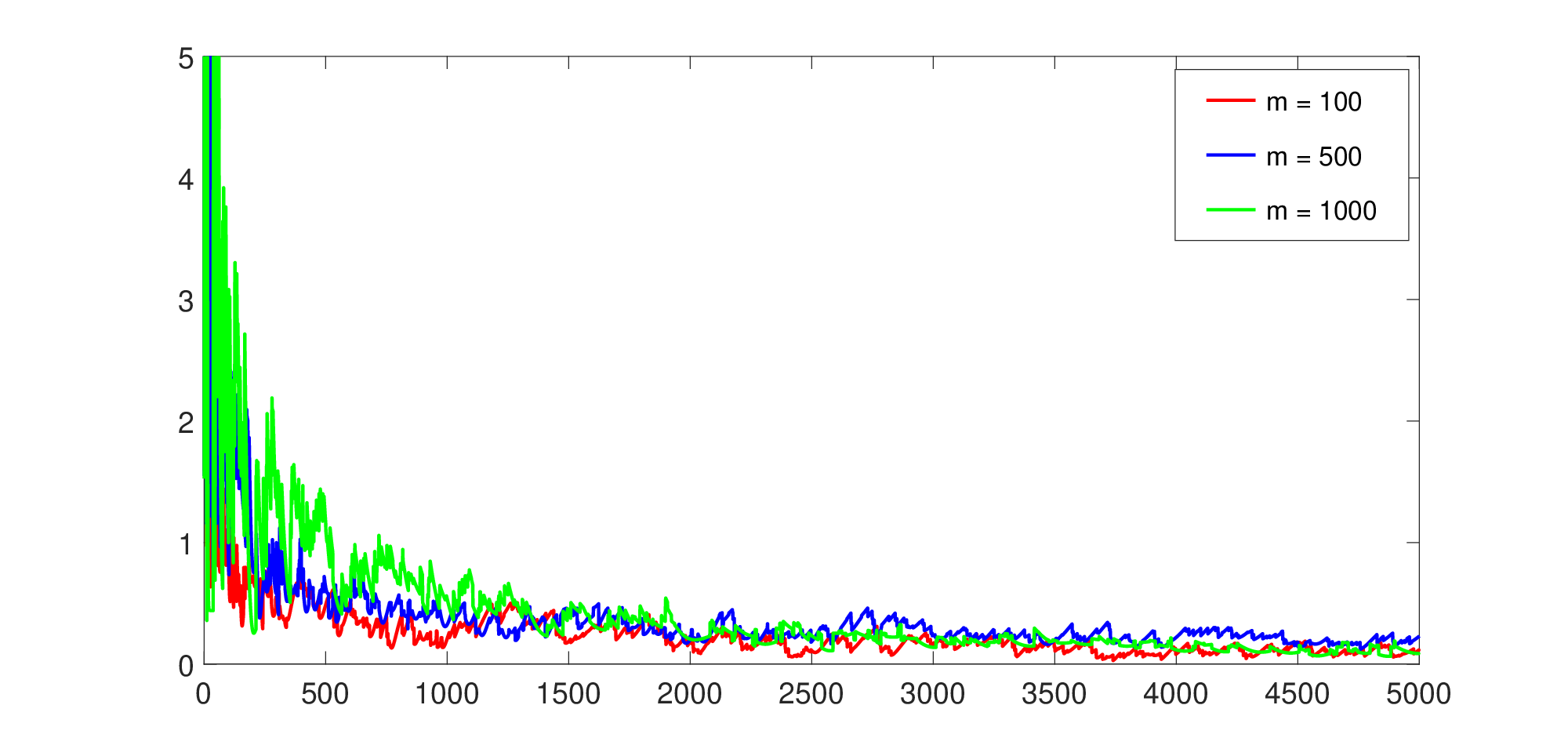}
		\put(6,18){\rotatebox{90}{{Relative Error}}}
		\put(40,-1){{time, $k$}}
	\end{overpic}
	\caption{Strongly convex function $f$ with the optimum outside the feasible set $X$.}
	\label{fig:strOut}
\end{figure}

\begin{figure}[t]
	\centering
	\begin{overpic}[width=1\textwidth]{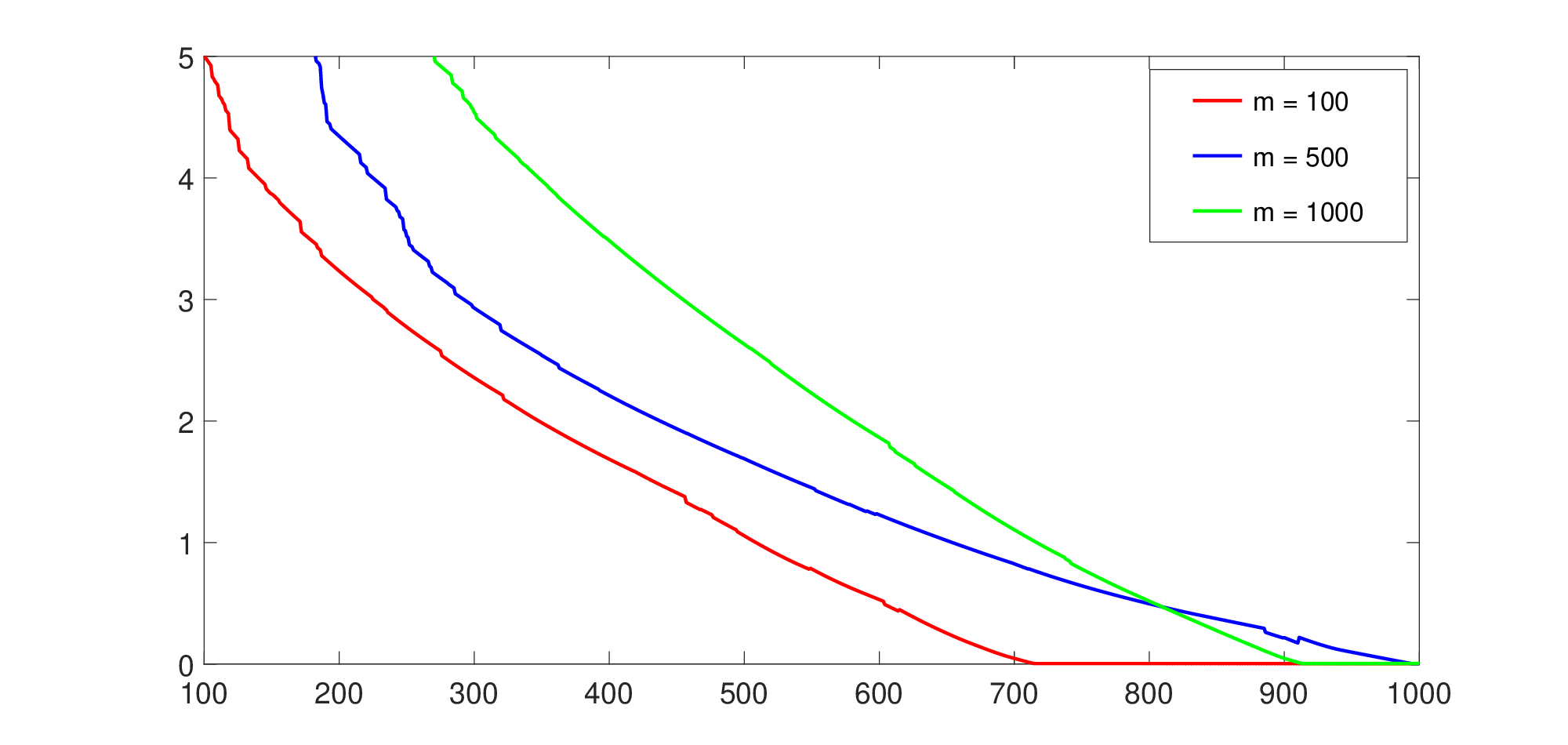}
		\put(6,18){\rotatebox{90}{{Relative Error}}}
		\put(40,-1){{time, $k$}}
	\end{overpic}
	\caption{Merely convex function $f$ with the optimum inside the feasible set $X$.}
	\label{fig:convInside}
\end{figure}

\begin{figure}[t]
	\begin{overpic}[width=1\textwidth]{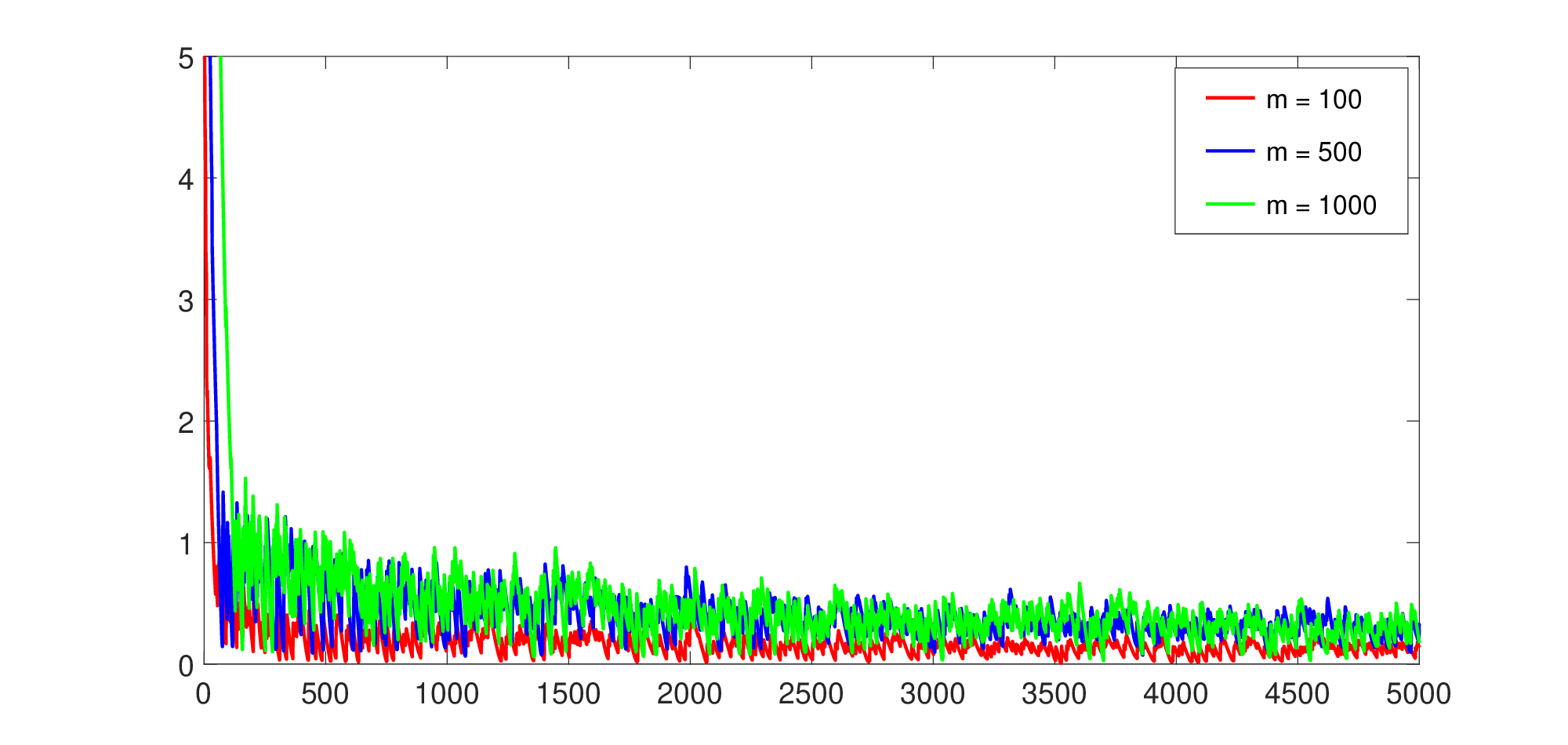}
		\put(6,18){\rotatebox{90}{{Relative Error}}}
		\put(40,-1){{time, $k$}}
	\end{overpic}
	\caption{Merely convex function $f$ with the optimum outside the feasible set $X$.}
	\label{fig:convOut}
\end{figure}

\begin{figure}[t]
	\begin{overpic}[width=1\textwidth]{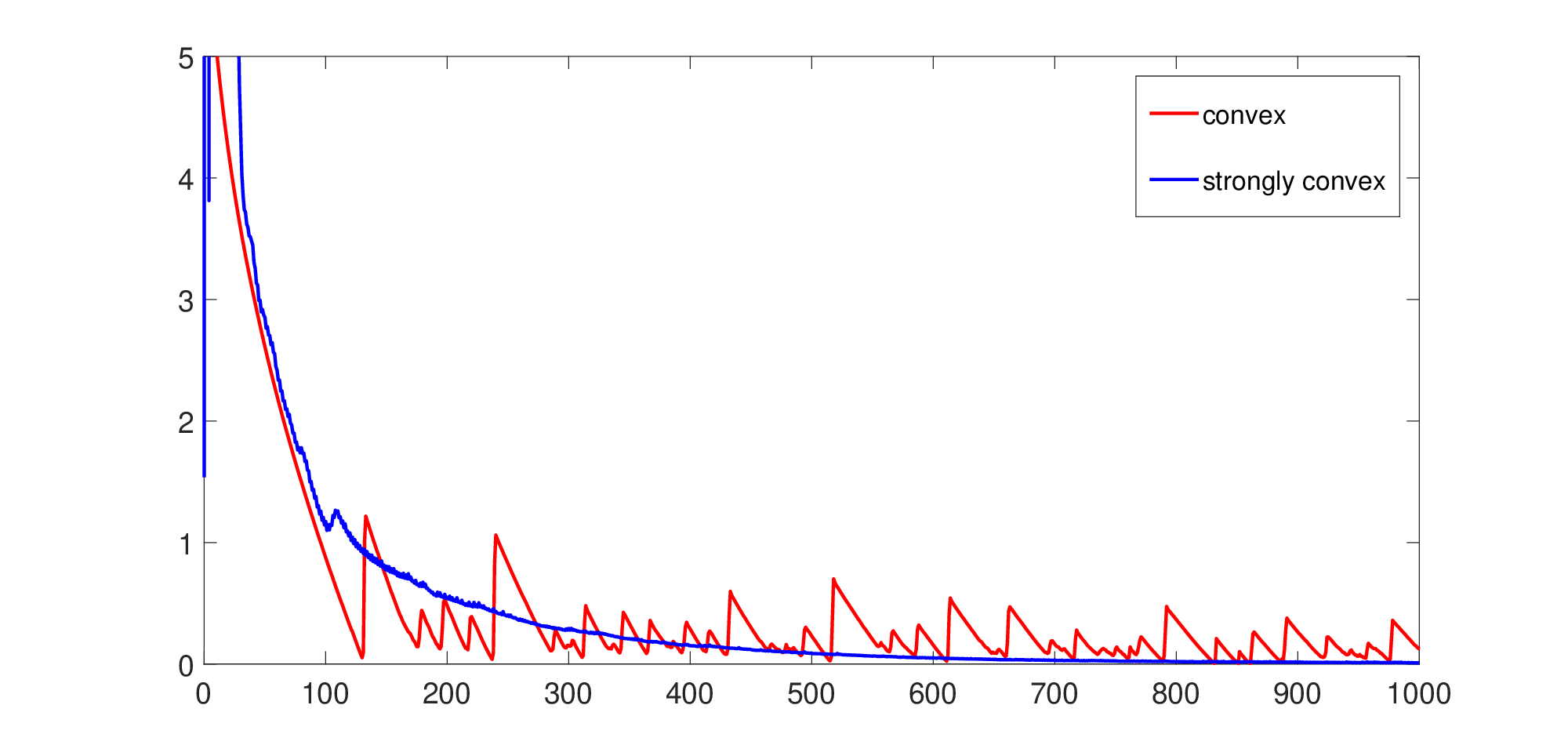}
		\put(6,18){\rotatebox{90}{{Relative Error}}}
		\put(40,-1){{time, $k$}}
	\end{overpic}
	\caption{Full gradient method for the case of optimum outside the feasible set $X$.}
	\label{fig:Fullconv}
\end{figure}
In this section we present some simulation results obtained for the proposed procedure~\eqref{eq:gradmet}.
To implement the algorithm for a strongly convex problem, we set up the objective function as follows:
\[f(x) = \|x-x^0\|^2,\]
where $f:\R^{10}\to\R$ and $x^0\in\R^{10}$ is some randomly generated vector. The vectors $a_i\in\R^{10}$ and the scalars $b_i$, $i\in\{1,\ldots,m\}$, defining the constraint set $X$, are generated according to a normal distribution. We distinguish between two settings: 1) $x^0\in\R^{10}$, as well as $a_i\in\R^{10}$ and $b_i\in\R$, are generated in such a way that $x^0\in X$; 2) $x^0\in\R^{10}$, as well as $a_i\in\R^{10}$ and $b_i\in\R$, are generated in such a way that $x^0\notin X$. The simulation results in these cases, given $m=100, 500, 1000$, are demonstrated by Figures~\ref{fig:strInside} and~\ref{fig:strOut} respectively. In these simulations, we estimate the relative error in terms of the actual iterates, namely $\frac{\|x_k - x^*\|}{\|x^*\|}$. The parameters $s_t$, $\g_t$, and $\d_t$ are chosen according to the conditions in Proposition~\ref{prop-converg_rate-strong}, namely $s_t = \frac{1}{k^{0.99}}$, $\g_t = c\ln k$, $s_t = \frac{1}{k^{2}}$, where the parameter $c$ is tuned. 

For the case of non-strongly convex optimization, we choose the following objective function: 
\[f(x) = \|x-x^0\|_{1},\]
where $\|\cdot\|_1$ denotes the $l_1$-norm. Analogously to the strongly convex case, $f:\R^{10}\to\R$ and $x^0\in\R^{10}$ is some randomly generated vector. The vectors $a_i\in\R^{10}$ and the scalars $b_i$, $i\in\{1,\ldots,m\}$, defining the constraint set $X$, are generated according to a normal distribution. As before, we distinguish between two settings: 1) $x^0\in\R^{10}$, as well as $a_i\in\R^{10}$ and $b_i\in\R$, are generated in such a way that $x^0\in X$; 2) $x^0\in\R^{10}$, as well as $a_i\in\R^{10}$ and $b_i\in\R$, are generated in such a way that $x^0\notin X$. The simulation results in these cases, given $m=100, 500, 1000$, are demonstrated by Figures~\ref{fig:convInside} and~\ref{fig:convOut} respectively. Here, the parameters $s_t$, $\g_t$, and $\d_t$ are chosen according to the conditions in Proposition~\ref{prop-converg_rate}, namely $s_t = \frac{S}{k^{0.5}}$, $\g_t = c\ln k$, $s_t = \frac{1}{k^{2}}$, where the parameters $S$ and $c$ are tuned. 

For the completeness of simulations, we implemented the non-incremental version of the procedure~\eqref{eq:gradmet}, where at each iteration the full gradient of the penalized objective function is used. The corresponding results are presented by Figure~\ref{fig:Fullconv}.


\section{Conclusion}\label{sec-concl}
\tatiana{This work deals with penalty reformulation of the optimization problems subjected to linear constraints, where the penalty is a variant of the one-sided Huber loss function. The infeasibility properties of the solutions of penalized problems for nonconvex and convex objective functions under time-varying penalty parameters are analyzed. 
	A random incremental penalty method for solving convex problem is proposed. This method is proven to converge to a solution of the original problem almost surely and in expectation for suitable choices of the penalty parameters and the stepsize. Moreover,  $O(\ln^{1/2+\e} k/{\sqrt k})$-convergence rate when the objective function is convex, and  $O(\ln^{\e} k/k)$-convergence rate when the objective function is strongly convex are established, given any positive $\e$. }

Some interesting questions for the future work include applicability of
accelerated incremental algorithms for the proposed penalty
reformulation in the case of both strongly and non-strongly
convex optimization as well as extension of the presented penalty approach to other structures of constraints.

\bibliographystyle{plain}
\bibliography{Literature}


\newpage
\appendix

\section{Proof of Proposition~\ref{prop-distancetofeas}}\label{app:distancetofeas}
\begin{proof}
	Let $\hat x\in X$ be an arbitrary feasible point, and $k\ge1$ be arbitrary but fixed. 
	By the optimality of $x_k^*$, we have 
	\[F_k(x_k^*)\le F_k(\hat x).\]
	By using $F_k(x)=f(x)+\g_k H_k(x)$ (see~\eqref{eq-penfun-hk}),
	after re-arranging the terms, we obtain
	\[\g_kH_k(x_k^*)\le f(\hat x) - f(x_k^*)+\g_k H_k(\hat x).\]
	By relation~\eqref{eq-hk-atfeasx} with $x=\hat x\in X$ and 
	relation~\eqref{eq-hk-atanyx} with $x=x^*_k$, we obtain 
	\[\frac{\g_k}{m\b}\dist(x_k^*,X) \le f(\hat x) - f(x_k^*)+\frac{\g_k\d_k}{4\a_{\min}}.\]
	By multiplying the preceding relation with $m\b/\g_k$, we have
	\begin{equation}\label{eq-disto}
		\dist(x_k^*,X) \le \frac{m\b}{\g_k} (f(
		\hat x) - f(x_k^*))+\frac{m\b\d_k}{4\a_{\min}},\end{equation}
	which is the stated relation. 
	By Lemma~\ref{lem-solsbded}, the optimal sets $X_k^*$ are uniformly bounded, i.e., there exist some $D>0$, such that 
	$\|x_k^*\|\le D$ for all $x_k^*\in X_k^*$ and for all $k\ge1$. Hence, the values $f(x_k^*)$ are also uniformly bounded
	implying that 
	$\frac{m\b}{\g_k} (f(\hat x) - f(x_k^*))\to0$ and $\g_k\to\infty$. The rest of the statement follows immediately from relation~\eqref{eq-disto}.
\end{proof}

\section{Proof of Lemma~\ref{lem-solsbded}}\label{app:solsbded}
\begin{proof}
	The set $X_k^*$ is closed by the continuity of $F_k(\cdot)$ for all~$k$. It remains to show that $X_k^*$ is bounded and nonempty for all $k$. Let $k\ge 1$ be arbitrary. 
	By Corollary~\ref{cor:lset}, with $\d=\d_k$, 
	$\g=\g_k$, and $\hat x\in X$, we have
	\begin{align}\label{eq-bounded}
		\{x\in\R^n\mid F_k(x)\le t_{k}(\hat x)\}\subseteq
		\{x\in\R^n\mid f(x)\le t_{k}(\hat x)\},\end{align}
	with $t_k(\hat x)= f(\hat x)+\g_k\d_k/(4\a_{\min}).$
	Since $\g_k\d_k\le c$, it follows that 
	\[t_{k}(\hat x)\le t_c(\hat x)\triangleq 
	f(\hat x)+\frac{c}{4\a_{\min}},\]
	implying that 
	\[\{x\in\R^n\mid F_k(x)\le t_{k}(\hat x)\}\subseteq
	\{x\in\R^n\mid f(x)\le  t_c(\hat x)\}.\]
	The set $\{x\in\R^n\mid f(x)\le t_{c}(\hat x)\}$
	is bounded since $f$ has bounded lower-level sets, implying by the preceding relation that the set $\{x\in\R^n\mid F_k(x)\le t_{k}(\hat x)\}$ is bounded. Since
	$X_k^*\subseteq \{x\in\R^n\mid F_k(x)\le t_{k}(\hat x)\}$, the set $X_k^*$ is bounded.
	
	To show that $X_k^*\ne\emptyset$, we observe that the penalized problem 
	$\min_{x\in \R^n} F_k(x)$ and its related constrained problem
	\[\min_{F_k(x)\le t_{k}(\hat x)} F_k(x)\]
	have the same optimal values and the same optimal sets.
	The optimal set of the problem above is nonempty by the Weierstrass Theorem, implying that $X_k^*$ is nonempty.
	
	To show that $\{X_k^*\}$ is uniformly bounded, we use the fact that 
	$X_k^*\subseteq\{x\in\R^n\mid F_k(x)\le t_{k}(\hat x)\}$ for all $k$, which by~\eqref{eq-bounded} implies that 
	$X_k^*\subseteq\{x\in\R^n\mid f(x)\le t_{c}(\hat x)\}$ for all $k$.
	The boundedness of the set $\{x\in\R^n\mid f(x)\le t_{c}(\hat x)\}$ implies that $\{X_k^*\}$ is uniformly bounded. 
\end{proof}

\section{Proof of Proposition~\ref{prop-sols}}\label{app:sols}
\begin{proof}
	In case $\mu=0$, the conditions of Lemma~\ref{lem-solsbded} are satisfied.
	If $f(\cdot)$ is strongly convex with $\mu>0$, then it has bounded lower-level sets.
	In this case, the conditions of Lemma~\ref{lem-solsbded} are also satisfied.
	Thus, by Lemma~\ref{lem-solsbded} the optimal solutions $x_k^*$ of the penalized problems are uniformly bounded, i.e., there exists $D>0$ such that 
	$\|x_k^*\|\le D$ for any optimal solution $x_k^*\in X_k^*$ of the penalized problem $\min_{x\in\R^n} F_k(x)$ 
	and for all $k$. Since 
	$\{x_k^*\mid x_k^*\in X_k^*, k\ge 1\}$
	is bounded, the set $\{\Pi_X[x_k^*]\mid x_k\in X_k^*,\ k\ge1\}$ is also bounded, i.e.,
	there is $R>0$ such that $\Pi_X[x_k^*]\le R$ for all $x_k^*\in X_k^*$ and all $k$.
	Thus, the constant $L$ in~\eqref{eq:sgdbound} is finite.
	
	Since each $F_k(\cdot)$ is strongly convex with a constant $\mu\ge0$,
	by the optimality of $x_k^*$ we have 
	\[\frac{\mu}{2}\|x^* -x_k^*\|^2\le F_k(x_k^*)- F_k(x)\quad\hbox{for any $x\in\R^n$}.\]
	Using $x=x^*$, where $x^*\in X^*$  and the expression for $F_k$ in~\eqref{eq-penfun-hk}, we obtain for all $k$,
	\begin{align}\label{eq-11}
		\frac{\mu}{2}&\|x^* -x_k^*\|^2\le f(x^*)-f(x_k^*) + \g_kH_k(x^*) - \g_kH_k(x_k^*).
	\end{align}
	Adding and subtracting $f(\Pi_X[x_k^*])$ yields
	\begin{align}\label{eq-12}
		f(x^*)-f(x_k^*)&=f(x^*)-f(\Pi_X[x_k^*]) + f(\Pi_X[x_k^*]) - f(x_k^*)\cr
		&\le -\frac{\mu}{2}\|x^*-  \Pi_X[x_k^*]\|^2
		+f(\Pi_X[x_k^*])-f(x_k^*),
	\end{align}
	where the inequality is obtained using 
	$\frac{\mu}{2}\|\Pi_X[x_k^*]-x^*\|^2 + \la \tilde \nabla f(x^*), \Pi_X[x_k^*]-x^*\ra +f(x^*)\le f(p_k)$ 
	for a subgradient $\tilde \nabla f(x^*)\in\partial f(x^*)$ (see~\eqref{eq-strconvex}),
	and the fact that $\la \tilde \nabla f(x^*), \Pi_X[x_k^*]-x^*\ra\ge0$, 
	which holds since $\Pi_X[x_k^*]$ is feasible
	and $x^*$ is the optimal point of $f(\cdot)$ over $X$. 
	By the (strong) convexity relation~\eqref{eq-strconvex}, we also have
	\begin{align*}
		f(\Pi_X[x_k^*])-f(x_k^*)&\le -\frac{\mu}{2}\|\Pi_X[x_k^*]-x_k^*\|^2 \cr
		&\ + \la \tilde\nabla f(\Pi_X[x_k^*]), \Pi_X[x_k^*] - x_k^*\ra.\end{align*}
	Since the subgradients of $f(\cdot)$ at points $\Pi_X[x^*_k]$ are uniformly bounded (see~\eqref{eq:sgdbound}), 
	it follows that 
	\[f(\Pi_X[x_k^*])-f(x_k^*)\le -\frac{\mu}{2}\|\Pi_X[x_k^*]-x_k^*\|^2 + L\|\Pi_X[x_k^*] - x_k^*\|.\]
	Combining the preceding inequality with~\eqref{eq-11} and~\eqref{eq-12}, after 
	re-arranging nonpositive terms, we obtain
	\begin{align*}
		\frac{\mu}{2}&\|x^* -x_k^*\|^2+\frac{\mu}{2}\|x^*-\Pi_X[x_k^*]\|^2 +\frac{\mu}{2}\|\Pi_X[x_k^*] - x_k^*\|^2\cr
		&\le L \|\Pi_X[x_k^*] - x_k^*\|+ \g_k H_k(x^*)-\g_k H_k(x_k^*).
	\end{align*}
	Since $x^*$ is feasible, we apply relation~\eqref{eq-hk-atfeasx} with $x=x^*$.
	Also, we use~\eqref{eq-hk-atanyx} with $x=x_k^*$, and obtain
	\begin{align*}
		\frac{\mu}{2}&\|x^* -x_k^*\|^2+\frac{\mu}{2}\|x^*-\Pi_X[x_k^*]\|^2 +\frac{\mu}{2}\|\Pi_X[x_k^*] - x_k^*\|^2\cr
		&\le L \|\Pi_X[x_k^*] - x_k^*\| + \frac{\g_k\d_k}{4\a_{\min}} -\frac{\g_k}{m\b} \dist(x_k^*,X).
	\end{align*}
	Using
	$\|\Pi_X[x_k^*] -x_k^*\|=\dist(x_k^*,X)$ and grouping the terms accordingly, we arrive at the stated relation.
\end{proof}



\end{document}